\newtheorem{Thm}{Theorem}[section]    
\newtheorem{Lem}[Thm]{Lemma}
\newtheorem{Pro}[Thm]{Proposition}
\newtheorem{Cor}[Thm]{Corollary}
\newtheorem*{Th}{Theorem 0 {\rm (\cite{Lo3a}\;Th.\,2)}}
\theoremstyle{definition}
\newtheorem{Def}[Thm]{Definition}
\newtheorem{varrem}[Thm]{}
\newtheorem{Ex}[Thm]{Example}
\newtheorem{Exs}[Thm]{Examples}
\theoremstyle{remark}
\newtheorem{Rem}[Thm]{Remark}
\newtheorem{Rems}[Thm]{Remarks}
\DeclareMathOperator{\ad}{ad}
\DeclareMathOperator{\Aut}{Aut}
\DeclareMathOperator{\nil}{nil}
\DeclareMathOperator{\GL}{GL}
\DeclareMathOperator{\rk}{rk}
\DeclareMathOperator{\Int}{Int}
\DeclareMathOperator{\Lie}{Lie}
\newcommand{\R}{\mathbb{R}}        
\newcommand{\N}{\mathbb{N}}        
\newcommand{\Z}{\mathbb{Z}}        
\newcommand{\C}{\mathbb{C}}        
\newcommand{\Q}{\mathbb{Q}}        
\newcommand{\FC}{{FC}^-}        %
\newcommand{\Cal}{\mathcal}
\newcommand{\fr}{\mathfrak}
\begin{document}
\title[]{On the Structure of Groups with Polynomial Growth IV}
\author{Viktor  Losert}
\address{Fakult\"at f\"ur Mathematik, Universit\"at Wien, Strudlhofg.\ 4,
  A 1090 Wien, Austria}
\email{Viktor.Losert@UNIVIE.AC.AT}
\date{24 March 2021}
\subjclass[2010]{Primary 22D05; Secondary 22E25, 22E30, 20F19, 20F24, 20F69,
20G20, 43A20}

\begin{abstract}
Recently (\cite{Lo3a}) we have shown a structure theorem for locally compact
groups of polynomial growth. We give now some applications on various growth
functions and relations to $\FC_G$\,-\,series.
In addition, we show some results on related classes of groups.
\end{abstract}
\maketitle

\baselineskip=1.3\normalbaselineskip
\setcounter{section}{-1}

\section{Introduction and Main results} 
\smallskip
Let $G$ be a locally compact (l.c.), compactly generated group.
$\lambda$ denotes a Haar measure on $G$ and  $V$ a compact neighbourhood
of the identity $e$, generating $G$. The group $G$ is said to be of {\it
polynomial growth}, if there exists $d\in \N$ such that
$\lambda (V^n)=O(n^d)$ \,for $n\in \N$\,. In \cite{Lo3a} we have given
a structure theorem which we restate here.

\begin{Th}  \label{th2}       
Let $G$ be a compactly generated l.c.\;group of polynomial growth having no
non-trivial compact normal subgroup. Then $G$ can be embedded as a closed
subgroup into a semidirect product \,$\widetilde G = \widetilde N\rtimes K$
such that $K$ is compact, $\widetilde N$ is a connected, simply connected
nilpotent Lie group, $K$ acts faithfully on $\widetilde N$\,,
\,$\widetilde G/G$ is compact and \,$\widetilde NG$ is dense in
$\widetilde G$\,.
\end{Th}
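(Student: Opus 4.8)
The plan is to first reduce to the case of a Lie group, then to use the ``type~(R)'' description of Lie groups of polynomial growth, and finally to build $\widetilde G$ out of the \emph{nilshadow} of the solvable part, enlarging the nilpotent radical along central directions so that the ``rotation'' and lattice\,-\,like pieces of $G$ become closed. The first two steps are essentially bookkeeping on top of known structure theory; the last step is where the real work lies.

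\textbf{Reduction to a Lie group.}
First I would invoke the structure theory of l.c.\ groups of polynomial growth (the solution of Hilbert's fifth problem together with the earlier parts of this series): such a $G$ has a compact normal subgroup $K_0$ with $G/K_0$ a Lie group, and $G/K_0$ still has polynomial growth since quotients of polynomial growth groups do. The hypothesis that $G$ has \emph{no} non\,-\,trivial compact normal subgroup then forces $K_0=\{e\}$, so $G$ is already a Lie group. (Consistently, in the conclusion $K$ embeds in $\Aut(\widetilde N)$, a Lie group, so $K$ and $\widetilde G$ are automatically Lie; the reduction loses nothing.) Now the identity component $G_0$ is a connected Lie group of polynomial growth, hence of type~(R) --- every $\ad(X)$, $X\in\Lie(G_0)$, has purely imaginary spectrum (Guivarc'h, Jenkins) --- and the component group $G/G_0$, being generated by the image of a compact set and of polynomial growth, is finitely generated and virtually nilpotent by Gromov's theorem; I would fix a torsion\,-\,free nilpotent subgroup of finite index in it.

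\textbf{Nilshadow and a first model.}
Type~(R) forces the semisimple quotient $G_0/\mathrm{Rad}(G_0)$ to be compact, so $G_0$ is a compact extension of its solvable radical. Passing to the simply connected cover of that radical, I would split the adjoint action of the vector group (radical modulo nilradical) on $\fr n=\Lie(\nil)$ into its commuting semisimple and nilpotent Jordan pieces; the semisimple pieces have purely imaginary spectrum, hence generate a torus $T\subset\Aut(\fr n)$. Untwisting this torus action turns the radical into a connected simply connected nilpotent Lie group $\widetilde N$ --- its nilshadow --- at the price of recording the twist as a $T$\,-\,action on $\widetilde N$. Together with the compact semisimple factor (which also acts on $\widetilde N$) and, after extending the twist over the virtually nilpotent component group, the whole of $G$, this produces a continuous homomorphism of $G$ into some $\widetilde N\rtimes K_1$ with $K_1$ compact and with \emph{dense} image in $K_1$.

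\textbf{Making the embedding closed and cocompact --- the main obstacle.}
The homomorphism just built need not be a topological embedding: the ``rotation numbers'' coming from $\pi_1(G_0)$, from the component group, and from irrational directions of the torus action sit densely, not closedly, in the compact factor. The remedy --- and this is the technical heart of the proof --- is to enlarge $\widetilde N$ by adjoining finitely many central $\R$\,-\,directions and to ``unwind'' each offending lattice\,-\,like subgroup of $G$ along a new direction, just as a line winding densely in a torus becomes a closed line in the product of that torus with $\R$. Since $G$ is a finite\,-\,dimensional Lie group only finitely many such directions are needed, and they can be chosen invariant under the compact group, so $\widetilde N$ stays a connected simply connected nilpotent Lie group carrying a faithful action of the compact group $K$ obtained as the closure of the image of $G$ modulo $\widetilde N$ (generated by $T$, the semisimple factor, and the finite piece of the component group). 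Finally I would verify directly that $G\hookrightarrow\widetilde G=\widetilde N\rtimes K$ is now closed, that $\widetilde G/G$ is compact (each adjoined direction was filled cocompactly by construction, and the genuinely non\,-\,compact directions of $\widetilde N$ are matched by the radical part of $G_0$), and that $\widetilde NG$ is dense since the image of $G$ in $K=\widetilde G/\widetilde N$ was arranged to be dense. The delicate part is to do all of this \emph{at once}: to add enough abelian directions to straighten every lattice\,-\,like subgroup of $G$, to keep $\widetilde N$ finite\,-\,dimensional, simply connected and nilpotent with a faithful compact action, and to preserve cocompactness, while remaining compatible with the nilshadow untwisting of the radical and with the possibly disconnected component group, inside a single semidirect product.
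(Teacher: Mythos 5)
Two preliminary remarks. This statement is Theorem~0, quoted from \cite{Lo3a}\;Th.\,2; the present paper contains no proof of it, but the actual construction is recalled in outline in step ($\delta$) of the proof of Theorem\;\ref{th74}\,: one chooses a closed normal subgroup $G_1$ with $G/G_1$ compact and $G_1/N$ abelian, forms the compact group $K_1=\overline{\Cal C}$ generated by the semisimple Jordan parts of the twisting automorphisms (\cite{Lo3a}\;Prop.\,2.15), builds a common extension of $G$ and $G_1\rtimes K_1$ which is almost nilpotent, factors the maximal ``union of compact subgroups'', and finally passes to the Malcev (real) completion $(\,\cdot\,)_{\R}$ of the resulting non-connected nilradical. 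Your outline (reduce to a Lie group via \cite{Lo1}, untwist by semisimple Jordan parts, extend by a compact group, then enlarge the nilpotent part to make $G$ closed and cocompact) has the same general shape, so the issue is not the route but the step you yourself call the technical heart: it is not only left unexecuted, the mechanism you propose for it fails in general.

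You build the nilshadow only from the radical of $G^0$ and then propose to adjoin finitely many \emph{central}, $K$-invariant $\R$-directions, one for each subgroup whose image winds densely in the compact factor, with cocompactness ``matched by the radical part of $G_0$''. Test this against $G=\mathsf H_{\Z}\ltimes\C$ of Example\;\ref{ex54}\,(b): there $G^0=\C$, so your $\widetilde N$ would be $\C$ enlarged by central directions, hence abelian; but by Remark\;\ref{rem53}\,(c) (equivalently Theorem\;\ref{th52}\,(i)\,), a group admitting a hull with abelian $\widetilde N$ has growth equal to $\rk(G)$, and here the growth is $6$ while $\rk(G)=5$, so $G$ cannot be a closed cocompact subgroup of any $(\C\times\R^k)\rtimes K$. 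The correct hull has $\widetilde N=\mathsf H_{\R}\times\C$: the two directions unwinding the ``winding'' generators of $\mathsf H_{\Z}$ are \emph{non-central} (they must reproduce the Heisenberg relation), and a further direction has to be added over the centre of $\mathsf H_{\Z}$, which does not wind at all (it acts trivially on $\C$). What is actually needed is the Malcev completion of the full non-connected nilradical of an almost nilpotent cocompact enlargement $G_{an}$ of $G$ (\cite{Lo3a}\;1.2, Prop.\,3.8 and Sec.\,4): all discrete and totally disconnected directions of $\nil(G_{an})$, together with their brackets, must be filled in — this nilradical need not be discrete or connected, so even its completability requires the machinery of \cite{Lo3a}\;1.2 — and cocompactness of $G$ in $\widetilde G$ is then carried largely by this non-connected nilradical, not by the radical of $G^0$. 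A second unaddressed point: the relative compactness of the group generated by the semisimple Jordan parts of the action of \emph{all} of $G$ on the nilradical (the group $K_1$ above), and the compatibility of these Jordan decompositions across the disconnected group, is exactly where polynomial growth of the whole group enters (type $R_G$ in the sense of \cite{Lo1}, \cite{Lo2}); type (R) for $G^0$ together with Gromov's theorem for $G/G^0$ does not control the twisting by elements outside $G^0$, so your phrase ``extending the twist over the virtually nilpotent component group'' assumes precisely what has to be proved there.
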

This generalizes a classical result of Gromov \cite{Gr} for the discrete case.
Furthermore (\cite{Lo3a}\;Th.\,3), a group $\widetilde G$ satisfying the
conditions of Theorem\;\ref{th2} is uniquely determined up to isomorphism.
We called it the {\it algebraic hull} of $G$ and
\,$\widetilde N$ the {\it connected nil-shadow} of $G$\,. For $G=N$ a
torsion free compactly generated nilpotent group, $\widetilde G=\widetilde N$
coincides with the Malcev completion $N_\R$ (see \cite{Lo3a}\;1.2 for
properties and references). In this paper we
will give some applications and also show some related results on
l.c.\;groups of polynomial growth.

In Section\;1 we treat some natural normal series of $\FC_G$\,-\,groups.
The descending central series for $\widetilde N$ gives rise to a special
$\FC_G$\,-\,series $(H_n)$ \,for $G$\,. Theorem\;\ref{th52} describes its
properties and gives a formula 
\,$d\,=\,\rk(G/N) + \sum\limits_{n \geq 1}\, n\;\rk(H_{n-1}/H_n)$ for the
{\it growth} of $G$ (i.e. the minimal growth exponent~$d$ above),
generalizing a well known formula for finitely generated nilpotent groups
(see also Remark\;\ref{rem53}\,(a)).
Then we treat an ascending series ({upper $\FC$-\,central series}),
starting with the $\FC$-\,centre $B(G)$\,. Theorem\;\ref{th15} shows,
that for a compactly generated l.c.\,group $G\,,\ B(G)$ is always closed.
This extends results of Tits (for Lie groups, using structure theory and
properties of linear groups) and M\"oller/Trofimov (for totally disconnected
groups, using graph theory). We give also a more algebraic argument for
Trofimov's result. Theorem\;\ref{th112} considers properties of the
$\FC$-\,hypercentre extending \cite{FF}\;Th.\,1 (for discrete groups).
In Theorem\;\ref{th65} we give an intrinsic characterization of the
groups $H_n$ by local growth properties (considering the powers of elements)
and there is also a characterization of the nil-radical by growth properties
(considering conjugacy classes). This is done by applying methods going back
to \cite{Gu} for connected nilpotent Lie groups.
Section\;3 treats a generalization of Theorem\;\ref{th2} to certain
non-compactly generated groups (Theorem\;\ref{th74} on bounded polynomial
growth, i.e. l.c.\,groups $G$ for which the growth of the compactly generated
open subgroups is uniformly bounded). This extends \cite{Wi}\;Th.\,3.1 for
discrete groups.
Section 4 gives applications to the characterization
of symmetry for weighted group algebras (Theorem\;\ref{th82}). The groups
$H_n$ are used to give a parametrical description for $G$\,,
generalizing Malcev's coordinates of the second kind. Finally, in
Section\;5, the ``limit spaces'' of Gromov (asymptotic cone) are described in
terms of the algebraic completion (Theorem\;\ref{th92}). In
Remark\;\ref{rem93}\,(c) we add some comments concerning Breuillard's
strong refinements on the asymptotic behavior of powers $B^n$ (\cite{Br}).
The condition of near polynomial growth is slightly weaker,
requiring $\lambda (V^{n_i})=O( n_i^d)$ just for a subsequence.
Theorem\;\ref{th95} shows that this already implies polynomial
growth, extending \cite{VW} (for discrete groups) and \cite{Tr2}
(for totally disconnected groups). It contains also a slight simplification
to the last part of the argument in \cite{Gr}\;p.\,69-71.
\smallskip
\section{Growth formula and $\FC_G$\,-\,series} 
\medskip
\begin{varrem}\label{di51}	       
Let $G$ be a l.c.\;group. If $\Cal B$ is a group acting on $G$ by
automorphisms, then $G$ is said to be an
{\it $\FC_{\Cal B}$\,-\,group} if the orbits
\,$\{\alpha (x)\! : \alpha \in \Cal B\}$ are relatively compact in $G$ for
all $x \in G$\,. For $\Cal B=\Int(G)$ the inner automorphisms,
$G$ is called an $\FC$-\,group.
In \cite{Lo2}\;Th.\,1, we have shown that a compactly generated l.c.\;group
$G$ has polynomial growth iff it has a finite {\it $\FC_G$\,-\,series},
i.e., there exists a series $G=G_0\supseteq G_1\supseteq
\dots \supseteq G_n=(e)$ of closed normal subgroups of $G$ such that
$G_i/G_{i+1}$ is an $\FC_G$\,-\,group for $i=0,\dots ,n-1$ (taking the
action of $G$ induced by the inner automorphisms). Thus
the compactly generated groups of polynomial growth coincide
with the class of compactly generated $\FC$-\,nilpotent groups.
  
For general groups $G,\Cal B$ as above, $B_{\Cal B}(G)$ \;(the
{\it $\FC_{\Cal B}$\,-\,centre of $G$}) denotes the union of
the relatively compact $\Cal B$-orbits in $G$\,. $B_{\Cal B}(G)$ is
always a subgroup. For $\Cal B=\Int(G)$ we write just $B(G)$ \,(the
$\FC$-\,centre, relatively compact conjugacy classes, sometimes denoted
as $G_{\FC}$).

Let $G$ be a compactly generated group of polynomial growth, $C$ its
maximal compact normal subgroup (\cite{Lo2}\;Prop.\,1). Then
Theorem\;\ref{th2} applies to \,$G/C$
giving the algebraic hull \,$\widetilde G = \widetilde N \rtimes K$\,. By
\cite{Gu}\;Th.\,I.4, $G$ and $\widetilde N$ \,(the connected nil-shadow of
$G/C$) have the same growth (in the sense of \cite{Gu}\;Def.\,I.1, see below).

For a quantitative version, consider the descending central series
(lower central series) \,$\bigl(C_n(\widetilde N)\bigr)$ of $\widetilde N$,
i.e., \,$C_0 (\widetilde N) = \widetilde N,\
C_{n+1} (\widetilde N) = [\widetilde N, C_n (\widetilde N)]$. Put\vspace{-1mm}
$$d\, =\, \sum_{n \geq 1}\; n\,
\dim\bigl(\,C_{n-1}(\widetilde N)/C_n (\widetilde N)\,\bigr)\,.$$
\vskip-2mm minus 2mm
Then by \cite{Gu}\;Th.\,II.1, $\widetilde N$ has growth of degree $d$,
hence this gives a formula for the growth of $G$. Thus, if $V$ is any compact
$e$-neighbourhood generating $G$, there exist \,$c_1, c_2 > 0$ \,such that
\ $c_1\,n^d \leq \lambda (V^n) \leq c_2\,n^d\quad \text{ holds for }\
n = 1, 2,\dots$ \;(see also \cite{Gu}\;L.\,I.5,\,I.6 to transfer the lower
estimates).

Note that in particular, this implies that the notions of polynomial growth
and strict polynomial growth (\cite{Gu}\;Def.\,I.2) coincide for arbitrary
compactly generated locally compact groups. A more direct argument for this
equivalence has been given in \cite{FG2}\;L.\,2.3. I found this result at
the end of 1992 and communicated it to several people. Among them, at a
conference in Istanbul 2002 to M. Leinert whom I also indicated the short
argument. He worked out the technical details and published it without further
notice \dots\,. Meanwhile much more is known.\vspace{.3mm}
Breuillard (\cite{Br}), extending earlier results of Pansu, has
shown that \;$\lim\limits_{n\to\infty}\dfrac{\lambda (V^n)}{n^d}$ \;exists for
every $G,V$ as above (see also Remark\;\ref{rem93}\,(c)\,).
\vskip 2mm plus .5mm
We want to give a corresponding formula for the growth in terms of
subgroups of~$G$. We introduce a special $\FC_G$\,-\,series ($H_n$).

If $A$ is a compactly generated abelian group, we have
$A \cong \R^a \times \Z^b \times C$, where $C$ is compact, $a,b \geq 0$ \
(\cite{HR}\;Th.\,9.8). We
set \,$\rk(A) = a + b$\,, the (topological) {\it rank} of $A$\,. By
\cite{HR}\;Cor.\,9.13,
this is well defined. If $H$ is a compactly generated $\FC$-\,group,
$C$ its maximal compact normal subgroup, then by \cite{GM}\;Th.\,3.20,
$H/C$ is abelian
and we put \,$\rk(H) =\rk(H/C)$. More generally, if $H$ is any
compactly generated group and $H/B(H)$ is
compact, we put \,$\rk(H) = \rk\bigl(B(H)\bigr)$ \
(see also Remark\;\ref{rem53}\,(a) for a more general definition and
Theorem\;\ref{th15}).
\end{varrem}
\begin{Thm}  \label{th52}      
Let $G$ be a compactly generated group of polynomial growth having no
non-trivial compact normal subgroups. Let $N$ be the (non-connected)
nil-radical, 
$\widetilde N$ is given by Theorem\;\ref{th2}. Put
\,$H_n = G \cap C_n (\widetilde N)\quad (n = 0, 1, 2, ...)$.
Then the following holds.
\\[.6mm]
{\rm (i)} \
$d\,=\,\rk(G/N) + \sum\limits_{n \geq 1}\, n\;\rk(H_{n-1}/H_n)$
\ gives the growth of $G$\,.
\\[-.8mm]
{\rm (ii)} \ $H_0 = N$; \ for  $n \geq 1$ \,we have:
\;$C_n (\widetilde N)/H_n$
is compact, $H_n$ is normal in $G\,,\linebreak
 H_{n-1}/H_n$ is a torsion free 
$\FC_G$\,-\,group and contained in the centre of $N/H_n$\,,\linebreak
$H_n/({[G,G]}^-\cap H_n)$ is compact,
$G/H_n$ has no non-trivial compact normal subgroups.\vspace{-1mm}
\end{Thm}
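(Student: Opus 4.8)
The plan is to work inside the algebraic hull $\widetilde G = \widetilde N \rtimes K$ supplied by Theorem~\ref{th2}, where $G$ embeds as a closed subgroup with $\widetilde G/G$ compact and $\widetilde N G$ dense. The key structural fact I would exploit throughout is that the descending central series terms $C_n(\widetilde N)$ are closed connected $K$-invariant normal subgroups of $\widetilde G$, so each $H_n = G \cap C_n(\widetilde N)$ is automatically a closed normal subgroup of $G$. I would first establish (ii), since (i) is essentially a bookkeeping consequence of it together with the Guivarc'h growth formula recalled in \ref{di51}.

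For (ii) I would proceed term by term. The identity $H_0 = N$: since $C_0(\widetilde N) = \widetilde N$, we have $H_0 = G \cap \widetilde N$, and I would argue this equals the nil-radical $N$ of $G$ — the inclusion $G \cap \widetilde N \subseteq N$ follows because $\widetilde N$ is nilpotent and $G \cap \widetilde N$ is closed normal in $G$, while the reverse inclusion uses that the nil-shadow construction forces $N$ into $\widetilde N$ (this is part of the algebraic hull machinery of \cite{Lo3a}). Compactness of $C_n(\widetilde N)/H_n$: since $\widetilde N G$ is dense in $\widetilde G$ and $\widetilde G / G$ is compact, one gets $C_n(\widetilde N)/(C_n(\widetilde N) \cap G) = C_n(\widetilde N)/H_n$ embedding as a dense subset of the compact quotient, and closedness of $C_n(\widetilde N)$ in $\widetilde G$ makes the embedding proper, hence the quotient is compact. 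For the $\FC_G$-property and torsion-freeness of $H_{n-1}/H_n$: the quotient $C_{n-1}(\widetilde N)/C_n(\widetilde N)$ is a vector group on which $K$ acts with relatively compact orbits (compactness of $K$), and $G$ acts through its image in $\widetilde G$; passing to the cocompact subgroup $H_{n-1}/H_n$ one gets relatively compact $G$-orbits, i.e. the $\FC_G$-property, and torsion-freeness is inherited from the vector group $C_{n-1}/C_n$ since $H_{n-1}/H_n$ injects into it. The centrality of $H_{n-1}/H_n$ in $N/H_n$ is the statement that $[N, C_{n-1}(\widetilde N)] \subseteq C_n(\widetilde N)$ intersected appropriately — this follows from $N \subseteq \widetilde N$ and the defining property $C_n(\widetilde N) = [\widetilde N, C_{n-1}(\widetilde N)]$. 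For $H_n/([G,G]^- \cap H_n)$ compact: I would use that $C_n(\widetilde N) \subseteq [\widetilde G, \widetilde G]$ for $n \geq 1$ (as $\widetilde N$ is nilpotent and $C_n = [\widetilde N, C_{n-1}]$), combined with density of $\widetilde N G$ to transfer this to $[G,G]^-$ up to a compact error. Finally, $G/H_n$ has no non-trivial compact normal subgroup: any such subgroup would lift to a compact normal subgroup situation in $\widetilde G/C_n(\widetilde N)$, which again satisfies the hypotheses of Theorem~\ref{th2} relative to $G/H_n$, forcing triviality.

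For part (i), once (ii) is in hand, the Guivarc'h formula $d = \sum_{n\geq 1} n \dim(C_{n-1}(\widetilde N)/C_n(\widetilde N))$ from \ref{di51} applies; I would rewrite each $\dim(C_{n-1}(\widetilde N)/C_n(\widetilde N))$ as $\rk(H_{n-1}/H_n)$ using that $C_{n-1}/C_n$ is a vector group containing $H_{n-1}/H_n$ cocompactly (so the rank of the latter equals the dimension of the former), and account for the degree-one contribution $\rk(G/N)$ separately: the abelian part of $G$ modulo its nilpotent part contributes linearly, matching the $n=1$ indexing convention once one notes $C_0/C_1 = \widetilde N/[\widetilde N,\widetilde N]$ corresponds to $H_0/H_1 = N/H_1$ while $G/N$ captures the complementary (toral/compact-action) directions in $\widetilde N / C_1(\widetilde N)$ not already inside $N$. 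I expect the main obstacle to be the careful transfer between $\widetilde G$-level statements and $G$-level statements — in particular verifying that intersecting with $G$ commutes well enough with forming commutator subgroups and quotients, given only density of $\widetilde N G$ rather than equality, and pinning down the precise relationship between $\rk(G/N)$ and the dimension count so that the two formulas for $d$ genuinely agree rather than differ by a constant.
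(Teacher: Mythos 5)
The decisive step of (ii) --- that $H_n$ is co-compact in $C_n(\widetilde N)$ for $n\ge1$ --- is exactly where your argument fails. You deduce it from compactness of $\widetilde G/G$ together with density of $\widetilde NG$; but that reasoning makes no use of $n\ge1$, and applied at $n=0$ it would show that $\widetilde N/N$ is compact, which is false in general (\cite{Lo3a}\;Cor.\,4.10\,(c), recalled in the paper's proof). The flaw is that closedness of $C_n(\widetilde N)$ in $\widetilde G$ does not make the injection $C_n(\widetilde N)/H_n\to\widetilde G/G$ proper: the image of $C_n(\widetilde N)$ in $\widetilde G/G$ need not be closed (equivalently, $C_n(\widetilde N)G$ need not be closed), so no compactness follows. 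The correct argument is genuinely deeper: the paper interposes an almost nilpotent group $G_{an}\supseteq G$ (\cite{Lo3a}\;Prop.\,3.8) with $G_{an}/G$ compact, $N_{an}=\nil(G_{an})$ co-compact in $\widetilde N$ and $[N_{an},N_{an}]\subseteq N$, and invokes \cite{Ra}\;Th.\,2.3\,Cor.\,1 to get $C_n(N_{an})$ co-compact in $C_n(\widetilde N)$; since $C_n(N_{an})\subseteq H_n$ for $n\ge1$, co-compactness of $H_n$ follows, and only then is $GC_n(\widetilde N)$ closed and $G/H_n$ a closed subgroup of $\widetilde G/C_n(\widetilde N)$. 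Because your versions of the $\FC_G$\,-\,property, torsion freeness, the absence of compact normal subgroups in $G/H_n$, and the identity $\rk(H_{n-1}/H_n)=\dim\bigl(C_{n-1}(\widetilde N)/C_n(\widetilde N)\bigr)$ for $n\ge2$ in (i) all presuppose this closed co-compact embedding, they inherit the gap. The claim that $H_n/({[G,G]}^-\cap H_n)$ is compact is likewise only gestured at: density of $\widetilde NG$ does not by itself transfer $C_n(\widetilde N)\subseteq[\widetilde G,\widetilde G]$ into a statement about ${[G,G]}^-$ ``up to a compact error''.

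In (i) you correctly isolate the delicate point --- the degree-one term --- but you leave it unresolved (you call it ``the main obstacle''), and that is precisely where the paper does real work. Since $N$ need not be co-compact in $\widetilde N$, one cannot read $\dim\bigl(\widetilde N/C_1(\widetilde N)\bigr)$ off from $N/H_1$ alone; one must prove $\dim(\widetilde N)=\rk(G/N)+\rk(N)$ after factoring $[\widetilde N,\widetilde N]$. The paper does this by taking $K_1=Z(K^0)^0$, its fixed-point group $\widetilde M\subseteq\widetilde N$, and $L_1=G\cap(\widetilde M\rtimes K_1)$, using \cite{Lo3a}\;Cor.\,4.9 and Prop.\,4.4\,(a) to see that $L_1$ is abelian, $NL_1$ is co-compact in $G$ and $L_1K_1\cap\widetilde M$ is co-compact in $\widetilde M$, and then carrying out a rank computation identifying $\rk(G/N)$ with $\rk\bigl(\widetilde M/(N\cap\widetilde M)\bigr)$. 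So your proposal has the right framework (work in the hull, prove (ii) first, compare with Guivarc'h's formula), but it is missing the two essential inputs: the $G_{an}$\,/\,Raghunathan co-compactness argument, and the $K_1,\widetilde M,L_1$ analysis of the abelianized hull.
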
\noindent
$N=\nil(G)$ is the maximal nilpotent normal subgroup of $G$
(\cite{Lo2}\;Prop.\,3).
In the general case, we can pass to \,$G/C$ (see the beginning of \ref{di51})
and get a normal series in $G$ with similar properties and a corresponding
expression for the growth of $G$ (see Remark\;\ref{rem53}\,(a)\,).
\begin{proof}
We start with (ii). By
\cite{Lo3a}\;Prop.\,4.8\,(c), we have $N = G\cap\widetilde N = H_0$\,.
Then by \cite{Lo3a}\;Cor.\,4.6,
$H_1 = N \cap [\widetilde N,\widetilde N]$ is
co-compact in $C_1(\widetilde N) = [\widetilde N, \widetilde N]$.
Normality of $H_n$ in $G$ results from normality of $\widetilde N$ in
$\widetilde G$.
\\
We take up now the notations of \cite{Lo3a}\;Sec.\,4 and use some
additional subgroups of $\widetilde G$ considered there.
Recall that $\widetilde N/N$ is not compact in general
(\cite{Lo3a}\;Cor. 4.10\,(c)), in \cite{Lo3a}\;Prop.\,3.8 we have shown
existence of a l.c.\,group $G_{an}$ containing $G$ as a closed subgroup such
that $G_{an}/G\,,\ G_{an}/N_{an}$ are both compact, where
$N_{an}=\nil(G_{an})$ \,(i.e. $G_{an}$ is almost nilpotent),
$[N_{an}, N_{an}] \subseteq N$\,.
$G_{an}$ has no non-trivial compact normal subgroup, hence by
\cite{Lo3a}\;Th.\,3, the algebraic hull $\widetilde G$ of
$G_{an}$ is also an algebraic hull for
$G$ and by \cite{Lo3a}\;Cor.\,4.10\,(c), it follows that
$\widetilde N$/$N_{an}$ is compact \,(in general, $G_{an}$ is not unique,
given $\widetilde G = \widetilde N\rtimes K$\,, the ``standard choice'' is
$G_{an} = GK_1$ where $K_1=Z(K^0)^0$ ($H^0$ denoting the identity component,
$Z(H)$ the centre of $H$)\,,
see \cite{Lo3a}\;Rem.\,4.11\,(b) and \cite{Lo3a}\;Prop.\,4.4\,(a).
\\[1mm plus .8mm]
By \cite{Ra}\;Th.\,2.3\;Cor.1, $C_n(N_{an})$ is co-compact in
$C_n (\widetilde N)$. Since $[N_{an}, N_{an}] \subseteq N$, we get
$C_n (N_{an}) \subseteq  H_n$
for $n \geq 1$ and it follows that $H_n$ is co-compact in
$C_n (\widetilde N)$. Consequently, $G\,C_n(\widetilde N)$ is closed
and (by \cite{HR}\;Th.\,5.33) the canonical 
projection $\widetilde G \to \widetilde G/C_n(\widetilde N)$
induces a continuous isomorphism of $G/H_n$ onto a closed subgroup of
$\widetilde G/C_n(\widetilde N)$.
This maps $N/H_n$ into $\widetilde N/C_n(\widetilde N)$ and
$H_{n-1}/H_n$ to a closed subgroup of
$C_{n-1} (\widetilde N)/C_n (\widetilde N)$. $\widetilde N$
being a connected, simply connected Lie group, it follows that
$\widetilde N/C_n(\widetilde N)$ is
torsion free (\cite{Va}\;Th.\,3.18.2),
$C_{n-1}\bigl(\widetilde N/C_n (\widetilde N)\bigr)$
is a central subgroup, hence a $\FC_{\widetilde G}$\,-\,group. This gives the
corresponding properties of $H_{n-1}/H_n$ as claimed in (ii). By an
easy induction, semisimplicity implies that the action of $K$ induced on
$\widetilde N/C_n(\widetilde N)$ is faithful. Thus by
\cite{Lo3a}\;Prop.\,4.8\,(c), $G/H_n$ has no non-trivial compact normal
subgroups.
\\[0mm plus 1mm]
Concerning (i), recall that by \cite{Gu}\;Th.\,I.4, $G$ and $\widetilde G$
have the same growth. Thus we will relate the parts of the sum given in (i)
to those of the sum given in \ref{di51}. Using (ii),
\,$H_{n-1}/H_n$ is for $n \geq 2$ isomorphic to a closed co-compact
subgroup of
\,$C_{n-1} (\widetilde N)/C_n (\widetilde N)$, \,hence by
\cite{Ra}\;Th.\,2.10,
\,$\rk (H_{n-1}/H_n) = \dim\bigl(C_{n-1}(\widetilde N)/C_n(\widetilde N)\bigr)
\linebreak = \rk\bigl(C_{n-1}(\widetilde N)/C_n(\widetilde N)\bigr)$
holds for $n \geq 2$.
For the remaining terms, we can (after factoring
$[\widetilde N, \widetilde N]$\,)
assume that $\widetilde N$ is abelian. For $K_1$ as above, we consider
$\widetilde M = \{x\in\widetilde N\! :
k \circ x = x \text{ \,for all } k \in K_1\}\,,\
G_1= G \cap (\widetilde N \rtimes K_1)\,,\
L_1= G \cap (\widetilde M \rtimes K_1)$. Then by  \cite{Lo3a}\;Cor.\,4.9,
$L_1$ is abelian, $G_1 = NL_1$ is co-compact in $G$. Thus
$G_1/N$ is an abelian co-compact subgroup of $G/N$, hence it is contained
(and co-compact) in
$B (G/N)$. In particular, $G/N$ is always covered by the definition of
\,$\rk$ \,in \ref{di51}
and we have $\rk (G/N) = \rk (G_1/N) = \rk\bigl(L_1/(L_1 \cap N)\bigr)$. By
\cite{Lo3a}\;Prop.\,4.4\,(a), $L_1 K_1 \cap \widetilde M$ is
co-compact
in $\widetilde M$. We have $L_1 \subseteq  \widetilde M \times K_1$\,.
Projection to the first coordinate induces an isomorphism between
$L_1/(L_1 \cap NK_1)$ and
$(L_1 K_1 \cap \widetilde M)/(N \cap \widetilde M)$. Since
$N \cap \widetilde M \subseteq L_1$, we have
$L_1\cap\mspace{.5mu}NK_1 = L_1 \cap (N \cap \widetilde M) K_1 =
(L_1 \cap N) (L_1 \cap K_1)$, hence
$(L_1 \cap NK_1)/(L_1 \cap N)$ is compact.
By elementary properties of the rank of abelian groups
(compare \cite{Ra}\;Prop.\,2.8), it follows that 
\;$\rk\bigl(L_1/(L_1 \cap N)\bigr) =\linebreak
\rk\bigl((L_1 K_1 \cap \widetilde M)/(N \cap \widetilde M)\bigr) =
\rk\bigl(\widetilde M/(N \cap \widetilde M)\bigr)$\,. This implies
\;$\dim(\widetilde N) =\linebreak
\rk\bigl(\,\widetilde M\,/(N \cap \widetilde M)\bigr) + \rk (N) =
\rk (G/N) + \rk (N)$ \,which gives the remaining part of the sum in (i).
\vspace{.5mm plus 2mm}
\end{proof}
\begin{Rems} \label{rem53}	    
(a) \ The notion of the (topological) {\it rank} can be extended to the class
of generalized $\overline{FC}$-groups $G$
(including the compactly generated groups of polynomial growth,
see \cite{Lo2}\;1.2.1 for definition) as follows. If $G$ is
solvable and has no non-trivial compact normal subgroup,
put \;$\rk (G) = \sum_{n \geq 0}\,\rk\bigl(D_n (G)/ D_{n+1}(G)\bigr)$,
\,making use of the series of topological commutator groups
$(\,D_0(G) = G,\ D_{n+1}(G) = {[D_n(G), D_n(G)]}^-)$.
For general $G$ having no non-trivial compact normal subgroup, put
\,$\rk (G) = \rk (R)$, where $R$ denotes the (non-connected) radical of $G$
(\cite{Lo2}\;Prop.\,3). Finally, if $G$ is any generalized
$\overline{FC}$-group, put \,$\rk (G) = \rk (G/C)$,
where $C$ denotes the maximal compact normal subgroup of $G$
(\cite{Lo2}\;Prop.\,1).\vspace{.5mm plus 1mm}

Then by arguments as in the proof of Theorem\;\ref{th52}, one can show that if
$G$ has polynomial growth and no non-trivial compact normal subgroups,
$\widetilde G$ denotes its algebraic hull, then
\,$\rk(G) = \rk (\widetilde G) = \rk (\widetilde N) = \rk (G_{an}) =
\rk (N_{an})$. For $\widetilde N$ a connected, simply connected nilpotent Lie
group, the formula for the growth can be written as \,
$d=\sum_{n \geq 0}\,\dim\,\bigl(C_n (\widetilde N)\bigr)$.
If $N$ is any compactly generated nilpotent
group, one gets \ $d=\sum_{n \geq 0}\,\rk\,\bigl(C_n (N)\bigr) $.
For $G$ a compactly generated group of polynomial growth, Theorem\;\ref{th52}
gives \,$d=\rk(G)+\sum\limits_{n\geq 1}\,\rk\,(\,H_n) $.\vspace{0mm plus .2mm}

For finitely generated nilpotent groups, the rank is defined in
\cite{Ra}\;Def.\,2.9 and for (discrete) polycyclic groups in
\cite{Ra}\;Def.\,4.3. This
is also called ``Hirsch length'' (\cite{Se}\;p\,.16). The case $G$ connected,
simply connected, solvable is done in \cite{Ra}\;4.35. \cite{Mo}\;p.\,15
gives a definition for ``elementary solvable Lie groups'', see also\linebreak
\cite{Gu}\;p.\,347. Be aware that \cite{Man}\;p.\,83 and \cite{Ro} use a
different notion of rank.\vspace{1mm plus.2mm}

For nilpotent groups, the growth-formula (sometimes called formula of Bass)
has been obtained by various people (see \cite{Ha}\;p.\,201 for further
references).
\vspace{.5mm plus .9mm}
\item[(b)] For $n\geq 2$, $H_n$ can be characterized as the minimal
subgroup of $H_{n-1}$ such that $H_n$ is a closed normal subgroup of $G$
and $H_{n-1}/H_n$ is a torsion free $\FC_G$\,-\,group\linebreak
(if $D$ is any closed $G$-invariant subgroup of $H_{n-1}$ consider the
Malcev-completion $D_{\R} \subseteq \widetilde N$ \,(\cite{Lo3a}\;1.2);
then $H_{n-1}/D$ is an $\FC_G$\,-\,group
iff $C_{n-1}(N)/D_{\R}$ is an $\FC_{\widetilde G}$\,-\,group; furthermore,
$H_{n-1}/D$ torsion free implies $D_{\R} \cap H_{n-1} = D$\,).
\vspace{.9mm plus.2mm}

In a constructive way, $H_n$ can thus be obtained as follows. Let $D_0$
be the minimal closed subgroup of $H_{n-1}$ such that 
$D_0 \supseteq [H_{n-1}, H_{n-1}]$ and $H_{n-1}/D_0$ is torsion
free \,(in other words, $D_0/{[H_{n-1}, H_{n-1}]}^-$ is the maximal
compact subgroup of 
$H_n/{[H_{n-1}, H_{n-1}]}^-$, equivalently,
$D_0 = ({[H_{n-1}, H_{n-1}]}^-)_{\R} \cap H_{n-1}$\,). Then
$A = H_{n-1}/D_0$ is abelian and we write it 
additively. For $x \in G$, the automorphism of $A$ induced by the inner
automorphism $\iota_x$ is
again denoted by $\iota_x$. Then $H_n/D_0$ is the closed subgroup 
generated by all \,$(\iota_x)_u (v) - v$\,, where $x \in G\,, v \in A$ and
$(\iota_x)_u$ denotes the unipotent component of the Jordan
decomposition of $\iota_x$\,, as in \cite{Lo3a}\;2.5.
\vspace{.4mm plus .8mm}
\item[(c)] We say that $G$ has {\it abelian} growth, if its growth equals the
rank \,$\rk (G)$. By Theorem\;\ref{th52}\,(i), this is equivalent to
$\widetilde N$ being abelian and by \ref{th52}\,(ii), this is equivalent to
$H_1$ being trivial.
An intrinsic characterization of these groups has been given in
\cite{Lo3a}\;Cor.\,4.10\,(d). As shown
in \cite{Lo3a}\;Ex.\,4.12\,(e), it is not enough to assume that $N$ is an
$\FC_G$\,-\,group (by \cite{Lo3a}\;Cor.\,4.10\,(d) the condition is
necessary).
\ This implies that (differently from the corresponding statements in (b)\,)
in general $H_1$ is {\it not} the minimal subgroup of $N$ such that $H_1$
is a closed subgroup of $G$ and $N/H_1$ is an $\FC_G$\,-\,group
(see Example\;\ref{ex54}\,(b)\,). Of course, it is not hard to see that
$H_1$ is the
minimal subgroup of $N$ such that $H_1$ is a closed normal subgroup of $G$
and $G/H_1$ has abelian growth.\vspace{.3mm}

To have a construction for $H_1$, start as in (b), let $D$ be the minimal
closed $G$-invariant subgroup of $N$ such that $N/D$ is torsion free,
abelian and $(\iota_x)_u$ is
the identity on $(N/D)_{\R}$ for all $x\in G$. Consider $L_1$ as in
\cite{Lo3a}\;Cor.\,4.9. Then it is not hard to see that
$H_1 = \bigl({(D\,[L_1, L_1])}^-\bigr)_{\R} \cap N$ \;(see the proof of
\cite{Lo3a}\;Cor.\,4.6).
\vspace{.4mm plus 1mm}
\item[(d)] $(H_n)$ defines an $\FC_G$\,-\,series for $N$ satisfying
the properties of \cite{Lo2}\;Th.\,2 (but the step from $G$ to $N$ has to
be considered separately).
From the statements in (c), it follows that $(H_n)$ is in general not
the minimal $\FC_G$\,-\,series with these properties (see also
Example\;\ref{ex54}\,(b)).
By the argument in the proof of Theorem\;\ref{th52}\,(i),\linebreak
we have for $n\ge1$: \ $\widetilde N/C_n(\widetilde N)\rtimes K$ is
(isomorphic to) the algebraic hull of $G/H_n$\,, this gives
(by \cite{Lo3a}\;Prop.\,4.8\,(c)\,) \,$N/H_n=\nil(G/H_n)$ is
the non-connected nil-radical,
furthermore
\,$C_{n-1}(\widetilde N)/C_n(\widetilde N)\cong (H_{n-1}/H_n)_{\R}$\,.
\vspace{.5mm}

To compute the growth of $G$, the group $H_n$ can be replaced by
any closed subgroup $H'_n$ such that $H_n/H'_n$ is compact and
$H'_{n+1}$ is normal in $H'_n \ (n = 0,1,2,...)$. For example,
one can take $H'_n = C_n(N_{an})$. In general,
$C_n (N_{an}) \not= H_n$, thus
$C_n (N_{an})/C_{n+1}(N_{an})$ need not be torsion free (see
Example\;\ref{ex54}\,(a)). If $G$ is discrete (hence $N_{an} = N$), then
$H_n$ is the {\it isolator} of 
$C_n(N)$ in $N$ (\cite{Ba}\;p.\,19, \cite{War}\;Def.\,3.26). For general
$G$, $H_n$ is the "topological isolator" of $C_n (N_{an})$ in $N$, i.e.,
$H_n/C_n (N_{an})$ is the maximal compact normal subgroup of
$N/C_n (N_{an})$ for $n \geq 1$.\vspace{.5mm}

Alternatively, the growth of $G$ can be computed from the connected nil-shadow
$\widetilde N$, in the connected case, this goes back to
\cite{Gu}\;Th.\,II.2'.\vspace{-.4mm plus.5mm}
\end{Rems}
\begin{Exs}	\label{ex54}	    
{\bf (a)} \,Consider the groups \,$G = \R^n \rtimes \Z$ \,or
\,$\R^n \rtimes \R$
of \cite{Lo3a}\;Ex. 4.12\,(a). $n \circ v = A^n v$\,, for $v\in\R^n$,
where $A \in \GL(n,\R)$ and all
eigenvalues of $A$ have modulus $1$. $A$ is semisimple
(i.e., $A_u = I$), then $G$
has abelian growth, the growth equals $\rk(G) = n + 1$\,, but unless the
eigenvalues are roots of unity, $G$ does not have a co-compact abelian
subgroup.
\\[.6mm]
For \,$G = \C^2 \rtimes \Z$, with the action
$n \circ (z, w) = \alpha^n(z, w+n z)$ \,(which is neither semisimple nor
unipotent), we get \,
$N = \C^2\,,\;H_1 = (0) \times \C$\,. This $G$ has non-abelian growth,
$\rk (G) = 5$ and the growth formula gives $1 + 2 + 2 * 2 = 7$.
\\[.6mm plus .6mm]
Finally, consider 
\,$G = (\Z \times \R) \rtimes \Z$ with the action
\,$n \circ (k, x) = (k, x + nk)$ \;(another variation of the Heisenberg group).
Then \,$G = N = N_{an}$ is nilpotent,
$H_1 = (0) \times \R \times (0)$, but \,$C_1 (G) = (0) \times \Z \times (0)$
and the growth of $G$ is 4.
\vspace{1mm plus .9mm}
\item[\bf (b)] In \cite{Lo3a}\;Ex.\,4.12\,(e): $G = \mathsf H_{\Z}\ltimes \C$
\,($\mathsf H_{\Z}$\,: the discrete Heisenberg group, set theoretically
identified with $\Z^3$), with multiplication
\;$(k, l, m, z)\:(k', l', m', z') = \linebreak
(k + k', l + l', m + m' + l\,k',e^{i(k'\alpha + l'\beta)}z + z')$
\,where $\alpha,\beta\in\R$ with $\alpha,\beta,2\pi\
\,\Q$-linearly independent
(here \cite{Lo3a}\;p.\,38 is slightly incorrect).
We get
\,$H_1 = Z(\mathsf{H}_{\Z}) \cong \Z$ and the growth
of $G$ equals~6, whereas \,$\rk (G) = 5$ and the action of $G$ on
$N=\{(0,0,m,z)\! : m \in \Z,\, z \in \C\}$ has
relatively compact orbits, i.e., $N = B(G)$ is an $\FC_G$\,-\,group, the
action of $\mathsf H_{\Z}$ on $\C$ is semisimple. 
\end{Exs}

We include a result on the $\FC$-\,centre and also on the $\FC$-\,hypercentre.
I~thank the referee for providing reference \cite{FF} which is also used in
recent work on the Choquet-Deny property.
\begin{Thm}   \label{th15}	    
Let $G$ be a compactly generated l.c.\,group,
then $B(G)$ is a closed subgroup.
\end{Thm}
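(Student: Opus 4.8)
The idea is to reduce the statement to the structure theory already available: since $G$ is compactly generated, $B(G)$ is a normal subgroup, and the quotient $G/\overline{B(G)}$ should have trivial $\FC$-centre, so it suffices to control $B(G)$ itself. The natural first step is to handle the case where $G$ is almost connected and, separately, where $G$ is totally disconnected, and then glue these via the fact that $G/G^0$ is a totally disconnected l.c.\ group. For the almost connected case one can invoke structure theory of Lie groups (Tits' result, as mentioned in the introduction); the new content is really the totally disconnected case and the passage to general $G$.

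\smallskip

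First I would show that $\overline{B(G)}$ is again an $\FC$-group, i.e. that passing to the closure does not enlarge the conjugacy-class behaviour; this is where one uses that $G$ is compactly generated, so that $B(G)$, being a directed union of relatively compact conjugacy classes, is itself ``compactly generated by bounded pieces''. Concretely, for $x \in \overline{B(G)}$ one approximates $x$ by elements of $B(G)$ and uses continuity of the conjugation action together with compact generation of $G$ to see that the conjugacy class of $x$ is still relatively compact. Once $\overline{B(G)}$ is known to be an $\FC$-group, one applies the structure theorem for compactly generated $\FC$-groups (\cite{GM}\;Th.\,3.20, already cited in \ref{di51}): $\overline{B(G)}$ modulo its maximal compact normal subgroup is abelian, hence $\overline{B(G)}$ has polynomial growth, and Theorem\;\ref{th2} applies to it.

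\smallskip

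The heart of the argument is then to show $B(G) = \overline{B(G)}$. Let $C$ be the maximal compact normal subgroup of $H := \overline{B(G)}$ (which is characteristic in $H$, hence normal in $G$); replacing $G$ by $G/C$ we may assume $H$ has no non-trivial compact normal subgroup, so $H$ is a compactly generated abelian-by-(nothing) group — in fact $H \cong \R^a \times \Z^b$ by \ref{di51}. Now $G$ acts on $H$ by automorphisms with all orbits relatively compact (that is exactly the $\FC_G$-condition, which $B(G)$ satisfies and which transfers to the closure). An automorphism of $\R^a\times\Z^b$ preserving a neighbourhood basis of bounded orbits must lie in a compact subgroup of $\Aut(H)$; hence the image of $G$ in $\Aut(H)$ is relatively compact, which forces \emph{every} element of $H$ — not merely those in $B(G)$ — to have relatively compact $G$-orbit, and in particular relatively compact conjugacy class in $G$. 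Therefore $H \subseteq B(G)$, giving $H = B(G)$ and closedness. The case distinction (almost connected / totally disconnected) is used inside the step ``$\overline{B(G)}$ is an $\FC$-group'' and in citing the appropriate structure results; the rest is uniform.

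\smallskip

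\textbf{Main obstacle.} The delicate point is the very first reduction: proving that the closure $\overline{B(G)}$ is still an $\FC$-group, i.e. that a limit of elements with relatively compact conjugacy classes again has a relatively compact conjugacy class. This is \emph{false} for general (non-compactly-generated) $G$, so the proof must genuinely exploit compact generation — presumably by fixing a compact generating set $V$ and showing that for $x$ near an element of $B(G)$ the set $V x V^{-1} x^{-1}$ stays in a fixed compact set, then bootstrapping to all of $G^{x}$. Making this uniform over the approximating sequence, and in the totally disconnected case where one cannot differentiate, is the technical crux; once it is in place, the rest follows from the already-established structure theorem and elementary facts about automorphisms of $\R^a \times \Z^b$.
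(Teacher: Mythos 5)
There is a genuine gap, and it sits exactly where you locate your ``main obstacle'': the claim that the $\FC_G$-condition ``transfers to the closure''. For a normal subgroup $H=\overline{B(G)}$, saying that every $G$-orbit (conjugacy class) of every element of $H$ is relatively compact is, by definition, the statement $\overline{B(G)}\subseteq B(G)$ --- i.e.\ the theorem itself. So your step 3 assumes the conclusion, and your step 1, on the stronger reading (conjugacy classes in $G$), already is the conclusion; on the weaker reading (internal conjugacy classes of $\overline{B(G)}$) it does not feed the later argument. The only support offered is the heuristic with $VxV^{-1}x^{-1}$: but for a fixed $x$ and compact $V$ the set $\{vxv^{-1}:v\in V\}$ is automatically compact by continuity; what must be controlled is $\bigcup_{n}\{vxv^{-1}:v\in V^{n}\}$ uniformly in $n$, and proximity of $x$ to elements of $B(G)$ gives no such uniform bound --- indeed $B(G)$ fails to be closed for suitable non-compactly generated groups (Remark\;\ref{rem113}\,(a)), so no soft continuity argument can close this gap. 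The actual proof needs hard input in both halves of your case distinction: for the connected/Lie part, Tits' theorem \cite{Ti} (used via Proposition\;\ref{pr16} and the reduction Proposition\;\ref{pr18}, which hinges on the maximal compact normal subgroup of $G^{0}$ and Yamabe); for the totally disconnected part, the Möller--Trofimov result \cite{Mo1} or the combinatorial Lemma\;\ref{le19}, whose point is that $B(G)\cap K$ (with $K$ open compact) equals $\bigcup_{F}(F\cap K)$ over compact normal subgroups $F$, and that this union stabilizes inside a single compact normal subgroup $F_{1}$. Nothing in your sketch substitutes for either ingredient.

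Two secondary problems. First, your appeal to \cite{GM}\;Th.\,3.20 requires $\overline{B(G)}$ to be compactly generated, which is not available: even for finitely generated discrete $G$, $B(G)$ need not be finitely generated (Remark\;\ref{rem113}\,(b)); compact generation of $G$ does not pass to $B(G)$ or its closure without further argument. Second, once one (circularly) grants that all $G$-orbits on $H=\overline{B(G)}$ are relatively compact, the whole analysis of $\Aut(\R^{a}\times\Z^{b})$ is superfluous, since $H\subseteq B(G)$ then follows immediately from the definition of $B(G)$; this is a sign that the structure-theoretic part of your plan is not doing the work --- the work all lies in the step you left open.
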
\noindent
In \cite{Ti} this was shown when $G$ is a projective limit of Lie groups
(not necessarily connected or compactly generated; in fact his proof works
whenever $G$ has a compact normal subgroup $K$ such that $G/K$ is a Lie
group), in \cite{Mo1} for totally
disconnected (and compactly generated) $G$\,.
For the proof we use two auxiliary results.\vspace{-2mm}
\begin{Pro}	\label{pr16}	
Let $G$ be a Lie group, $\Cal B$ a subgroup of $\Aut(G)$ containing $\Int(G)$.
Then $B_{\Cal B}(G)$ is a closed subgroup of $G$\,.
\end{Pro}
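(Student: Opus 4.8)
The plan is to reduce to the connected, simply connected case by standard structure theory, then exploit the Jordan decomposition of automorphisms as developed in the paper's earlier work.

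First I would observe that $B_{\Cal B}(G)$ is a normal subgroup of $G$ (since it is $\Cal B$-invariant and $\Int(G)\subseteq\Cal B$), so it suffices to show it is closed. Let $G^0$ be the identity component. Since $\Cal B$ permutes the components of $G$ and $G^0$ is characteristic, $\Cal B$ acts on $G/G^0$, which is discrete; an element $xG^0$ has relatively compact $\Cal B$-orbit in $G/G^0$ iff that orbit is finite. I would first treat $B_{\Cal B}(G)\cap G^0$ and then handle the passage to the full group. For the connected part, pass to the universal cover or, more precisely, use that $G^0$ has a maximal compact normal subgroup modulo which it is (a quotient of) a simply connected solvable times semisimple group; the semisimple directions contribute nothing to $B_{\Cal B}$ beyond compact pieces, so the essential case is $G$ connected, simply connected, and — after a further reduction via the nilradical — one is led to analyze how $\Cal B$ acts on a nilpotent (or vector-group) quotient.

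The key technical step is then: on a connected simply connected nilpotent Lie group $N$ (written through its Lie algebra $\fr n$), an automorphism $\alpha\in\Cal B$ has a Jordan decomposition $\alpha=\alpha_s\alpha_u$ into commuting semisimple and unipotent parts (cf. \cite{Lo3a}\;2.5, already invoked in the paper). An orbit $\{\alpha(x):\alpha\in\Cal B\}$ is relatively compact iff the unipotent parts act trivially on $x$ and the semisimple parts keep $x$ in a bounded set; the point $x$ lies in $B_{\Cal B}(N)$ precisely when $x$ is fixed by all $(\alpha)_u$ and has bounded $\Cal B_s$-orbit. The set of common fixed points of all unipotent parts is a closed (in fact Zariski-closed, hence Lie) subgroup $M$, and on $M$ the action factors through a group of semisimple automorphisms; there, relative compactness of orbits is a linear-algebra condition (eigenvalues of modulus one, plus diagonalizability of the closure) that cuts out a closed subgroup — here I would use that the closure of the image of $\Cal B_s$ in $\GL(\fr m)$ is a compact-by-(unipotent-free) group and that $B_{\Cal B}(M)$ is the union of its finite-dimensional invariant subspaces on which the closure acts with bounded orbits, which is closed. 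Assembling $M$ and this subspace gives closedness of $B_{\Cal B}(N)$.

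Finally I would lift back: closedness passes from the nilradical quotient to $G^0$ using that the extension is by a group where the $\Cal B$-orbits are controlled (the abelian/reductive layers contribute closed pieces by the same eigenvalue analysis, now for a general abelian Lie group $\R^a\times\T^c$), and then from $G^0$ to $G$ by the finite-orbit criterion on $G/G^0$: $B_{\Cal B}(G)$ is the preimage in $G$ of the finite-$\Cal B$-orbit part of $G/G^0$ intersected with the set of elements whose $\Cal B$-orbit, after projecting away the (already closed) connected contribution, stays bounded — and a descending-chain / finiteness argument shows this is closed. The main obstacle I expect is the semisimple case: controlling the closure of an (a priori non-closed, possibly infinitely generated) group of semisimple automorphisms in $\GL(\fr n)$ and showing that the set of vectors with bounded orbit under this closure is a \emph{closed} subspace rather than merely a union of subspaces — this is where one must invoke properties of linear groups (the closure is compact-by-abelian up to unipotents, and its ``bounded part'' acts semisimply), essentially the content of the cited results of Tits, which I would either reprove by a Zariski-closure argument or cite.
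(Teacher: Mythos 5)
There is a genuine gap, and it sits exactly where you yourself flag the ``main obstacle''. Your reduction culminates in the claim that, for the (closure of the) semisimple part of the action, the set of elements with bounded orbit is a closed subgroup, and you propose to ``either reprove by a Zariski-closure argument or cite'' Tits for it. But that statement \emph{is} the substance of the proposition: $\Cal B$ need not be closed in $\Aut(G)$, need not be finitely generated, and its elements' semisimple and unipotent Jordan components do not form groups, so ``bounded $\Cal B_s$-orbit'' is not even an orbit of a group acting on $G$; likewise your asserted equivalence ``the $\Cal B$-orbit of $x$ is relatively compact iff all unipotent parts fix $x$ and the semisimple parts keep $x$ bounded'' is unproven and delicate for exactly these reasons (for a single automorphism it follows from polynomial growth of unipotent orbits, but for a whole, possibly non-closed, group of mixed automorphisms it needs an argument of Tits type, not a remark). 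The passage from $G^0$ back to $G$ is also only waved at (``a descending-chain / finiteness argument''); in the paper the analogous step for inner automorphisms (Proposition \ref{pr18}) requires real work with maximal compact normal subgroups. Finally, the result of Tits that is actually available to cite, \cite{Ti}\;Cor.\,1, concerns the $\FC$-centre $B(G_1)$ of a Lie group $G_1$ under \emph{inner} automorphisms, so it does not apply verbatim to the semisimple-linear situation you have reduced to.

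The paper avoids all of this with one device you missed: form the semidirect product $G_1=G\rtimes\Cal B$ with $\Cal B$ given the \emph{discrete} topology. Then $G_1$ is again a Lie group, conjugation in $G_1$ of an element $x\in G$ by $(g,\beta)$ gives $g\,\beta(x)\,g^{-1}$, and since $\Int(G)\subseteq\Cal B$ one checks directly that $B_{\Cal B}(G)=B(G_1)\cap G$; now \cite{Ti}\;Cor.\,1 applied to $G_1$ gives closedness of $B(G_1)$, hence of $B_{\Cal B}(G)$. This turns the outer action into an inner one and renders the entire Jordan-decomposition analysis you sketch unnecessary; if you want to salvage your approach, you must either carry out the semisimple closedness argument in full (essentially redoing Tits) or find a reduction, like the one above, to a statement that is literally in the literature.
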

\begin{proof}
We consider the semidirect product $G_1=G\rtimes\Cal B$\,, with discrete
topology on $\Cal B$\,. Then $G_1$ is again a Lie group, hence by
\cite{Ti}\;Cor.\,1,  $B(G_1)$ is closed.
It is easy to check that $B_{\Cal B}(G)=B(G_1)\cap G$\,.
\end{proof}
\begin{Lem}	 \label{le17}	
Let $H$ be an almost connected l.c.\,group.
Then there exists a maximal compact normal subgroup $K$\,. It is unique and
characteristic in~$H$\,, $KH^0$ is open in $H$\,.
$K\cap H^0$ is the maximal compact normal subgroup of~$H^0$,
$H^0/(K\cap H^0)$ is a Lie group with no non-trivial compact normal
subgroups.\vspace{-1mm}
\end{Lem}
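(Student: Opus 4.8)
The plan is to build $K$ as the closure of the union of all compact normal subgroups of $H$, after first showing this union is itself a group with the desired properties. First I would reduce to the connected case: the almost connected group $H$ has an open almost connected subgroup $H^0$ with $H/H^0$ compact... wait, that is not quite right either. Let me restart the strategy.

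\begin{proof}[Proof plan]
First I would handle $H^0$. By Iwasawa's structure theory (or the solution of Hilbert's fifth problem), $H^0$ is a projective limit of connected Lie groups; equivalently, every $e$-neighbourhood in $H^0$ contains a compact normal subgroup $L$ of $H^0$ with $H^0/L$ a connected Lie group. Among all compact normal subgroups of $H^0$ I would show there is a largest one $K_0$: if $L_1,L_2$ are compact normal in $H^0$ then $L_1L_2$ is again compact normal, so the compact normal subgroups are directed upward; taking $K_0$ to be the closure of their union, one checks using the projective-limit structure that $K_0$ is still compact (the image of $K_0$ in each Lie quotient $H^0/L$ is a compact normal subgroup of a Lie group contained in the — necessarily finite-dimensional — set of compact normal subgroups, and these images are controlled), hence $K_0$ is the unique maximal compact normal subgroup of $H^0$, and $H^0/K_0$ is a Lie group. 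That $H^0/K_0$ has no non-trivial compact normal subgroup follows since any such subgroup would pull back to a compact normal subgroup of $H^0$ strictly larger than $K_0$. Uniqueness makes $K_0$ characteristic in $H^0$, hence normal in $H$.

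Next I would pass to $H$. Since $H^0$ is normal and open in $H$ and $K_0$ is characteristic in $H^0$, the quotient $H/K_0$ is an almost connected group whose identity component $(H/K_0)^0 = H^0/K_0$ is a Lie group with no non-trivial compact normal subgroup. For an almost connected group $D$ with $D^0$ a Lie group, $D$ itself is a Lie group (the quotient $D/D^0$ is discrete and $D^0$ is open). So $H/K_0$ is a Lie group. Now I would invoke the known Lie-group case: a Lie group has a unique maximal compact normal subgroup (this is classical; see e.g. the reference \cite{Ti} already used in the excerpt, or it can be extracted from Lemma~\ref{le17}'s own hypotheses applied to the Lie case). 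Call it $\bar K \subseteq H/K_0$, and let $K$ be its preimage in $H$. Then $K$ is an extension of the compact group $\bar K$ by the compact group $K_0$, hence compact, and normal in $H$. Any compact normal subgroup $M$ of $H$ has image in $H/K_0$ a compact normal subgroup, hence contained in $\bar K$, so $M \subseteq K$; thus $K$ is the maximal compact normal subgroup, unique, and characteristic since it is defined intrinsically. Finally, $K \cap H^0$ is compact normal in $H^0$, so $K\cap H^0 \subseteq K_0$; conversely $K_0 \subseteq K$ and $K_0 \subseteq H^0$, so $K\cap H^0 = K_0$, which we already identified as the maximal compact normal subgroup of $H^0$, and $H^0/(K\cap H^0) = H^0/K_0$ is the asserted Lie group. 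Openness of $KH^0$ is immediate since $H^0$ is open.

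The main obstacle I expect is the existence step for $K_0$ inside $H^0$: showing that the closure of the directed union of all compact normal subgroups of the (possibly infinite-dimensional) pro-Lie group $H^0$ is still \emph{compact}. The directedness is routine, but compactness of the closure requires genuinely using that $H^0$ is a projective limit of Lie groups — one must argue that the union cannot "escape to infinity", e.g. by checking that for a suitable small compact normal $L$ with $H^0/L$ Lie, every compact normal subgroup of $H^0$ projects into the (unique, bounded) maximal compact normal subgroup of the Lie group $H^0/L$, so the whole union lies in the preimage of that, which is compact. Once $K_0$ is in hand, everything else is bookkeeping with quotients and the already-established Lie-group case.
\end{proof}
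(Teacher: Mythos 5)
There is a genuine gap, and it sits exactly at the point you flagged as "bookkeeping": the passage from $H^0$ to $H$. For an almost connected l.c.\ group, $H/H^0$ is compact and totally disconnected but \emph{not} necessarily discrete, so $H^0$ need \emph{not} be open in $H$. Consequently your claims "$H^0$ is normal and open in $H$", "for an almost connected $D$ with $D^0$ a Lie group, $D$ is a Lie group since $D/D^0$ is discrete and $D^0$ is open", and "openness of $KH^0$ is immediate since $H^0$ is open" all fail. Concretely, take $H=\Z_p$ (or any infinite profinite group): then $H^0$ and your $K_0$ are trivial, yet $H/K_0=H$ is not a Lie group; or take $H=\R\times\Z_p$, where again $K_0$ is trivial and $H/K_0$ is not Lie. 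In such examples the maximal compact normal subgroup must swallow the non-discrete totally disconnected part (e.g.\ $K=H$ for $H$ profinite, $K=\{0\}\times\Z_p$ in the second example), whereas your construction produces $K$ as an extension of a compact normal subgroup of a quotient that you wrongly assumed to be a Lie group, so the argument breaks down before $K$ is even defined. Note also that the openness of $KH^0$ is part of the \emph{conclusion} of the lemma precisely because $H^0$ alone is not open; it comes from the fact that $K$ contains a compact normal $L$ with $H/L$ Lie, whence $(H/L)^0=H^0L/L$ is open and so $H^0L\subseteq KH^0$ is open.

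The repair is to run your directed-union (or any maximality) argument on $H$ itself rather than on $H^0$, which is essentially what the paper does: by Yamabe's theorem, the almost connected group $H$ has arbitrarily small compact normal subgroups $L\trianglelefteq H$ with $H/L$ an (almost connected) Lie group; one first proves the lemma for almost connected \emph{Lie} groups (the paper does this by maximizing $\dim K$ and then minimizing the index $[H\!:\!KH^0]$ — note that the connected Lie case you invoke is not enough, since the Lie quotient $H/L$ is only almost connected), and then pulls the maximal compact normal subgroup of $H/L$ back to $H$; every compact normal subgroup of $H$ maps into it, so the preimage is the desired $K$. Your first step (existence of $K_0$ in $H^0$ via the pro-Lie structure) is correct but redundant once this is done, since $K\cap H^0$ automatically has the stated properties.
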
\noindent
$H^0$ denotes the identity component of $H$\,.
\begin{proof}
This is well known, see also \cite{WY}\;L.\,4. If $H$ is an almost connected
Lie group, existence of $K$ follows by first maximizing the dimension of
$K$ and then minimizing the degree $[H\!:\!KH^0]$ \,(one can also use more
advanced results of Iwasawa). Then the general case can be done using Yamabe's
theorem. The properties are easy.
\end{proof}\vspace{.5mm}
\begin{Pro}	\label{pr18}	
Let $G$ be a l.c.\,group such that $B(G/G^0)$ is closed in $G/G^0$. Then
$B(G)$ is closed in $G$\,, \,$G^0B(G)$ is open in $\pi_{G^0}^{-1}(B(G/G^0))$.
\vspace{-1.5mm}
\end{Pro}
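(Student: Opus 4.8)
The plan is to reduce the problem to the almost connected case via Lemma 1.17, then to the Lie group case via Proposition 1.16, and finally to assemble the pieces using the hypothesis on $G/G^0$. First I would pass to the almost connected subgroup $L = \pi_{G^0}^{-1}(\overline{B(G/G^0)})$; actually, since $B(G/G^0)$ is assumed closed, $L$ is the preimage of $B(G/G^0)$ itself, an almost connected open subgroup of $G$ once one knows (as is standard, using compact generation of the quotient groups appearing) that it is open. Here I would apply Lemma 1.17 to $L$: there is a unique maximal compact normal subgroup $K_L$, characteristic in $L$, and $L^0/(K_L\cap L^0) = G^0/(K_L\cap G^0)$ is a Lie group with no non-trivial compact normal subgroup.

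The key observation is that $B(G)\subseteq L$: a relatively compact conjugacy class in $G$ projects to a relatively compact conjugacy class in $G/G^0$, hence lands in $B(G/G^0)$. So it suffices to show $B(G)$ is closed in $L$, and — since $K_L$ is characteristic in $L$ and normal in $G$ (being characteristic in the open normal subgroup $L$... here one needs $L$ normal in $G$, which follows because $B(G/G^0)$ is normal in $G/G^0$) — we may pass to $L/K_L$. Set $\bar L = L/K_L$, so $\bar L^0$ is a Lie group with no non-trivial compact normal subgroup and $\bar L/\bar L^0$ still has closed $\FC$-centre. The question becomes showing $B(\bar L)$ is closed, with the extra leverage that $\bar L^0$ is now genuinely Lie. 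On $\bar L^0$, conjugation by elements of $\bar L$ gives a subgroup $\Cal B$ of $\Aut(\bar L^0)$ containing $\Int(\bar L^0)$, so by Proposition 1.16 the set $B_{\Cal B}(\bar L^0)$ of elements of $\bar L^0$ with relatively compact $\bar L$-conjugacy-class-in-$\bar L^0$ is a closed subgroup of $\bar L^0$; and a short check shows this equals $B(\bar L)\cap \bar L^0$.

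The main obstacle, and the step requiring the most care, is the passage from ``$B(\bar L)\cap\bar L^0$ closed in $\bar L^0$'' plus ``$B(\bar L/\bar L^0)$ closed'' to ``$B(\bar L)$ closed in $\bar L$''. This is essentially a general extension lemma: if $N$ is a closed normal subgroup of a l.c.\ group $H$ with $B(H)\cap N$ closed in $N$ and $\pi_N(B(H))$ closed in $H/N$ (the latter following from $B(H/N)$ closed, since $\pi_N(B(H))\subseteq B(H/N)$), and if moreover $B(H)\cap N$ is co-compact in the fibre $B(H)\cap\pi_N^{-1}(\pi_N(B(H)))$ over each point — which here holds because $\bar L^0$ is a Lie group with trivial maximal compact normal subgroup, so the obstruction to lifting is controlled — then $B(H)$ is closed. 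Concretely, I would show that if $x_i\in B(H)$ converge to $x\in H$, then $\pi_N(x)\in\pi_N(B(H))$, choose $y\in B(H)$ with $\pi_N(y)=\pi_N(x)$, and reduce to showing $xy^{-1}\in B(H)$; since $xy^{-1}\in N$ (after adjusting $x_i$ by conjugates to stay near $y$), one uses $B(H)\cap N$ closed together with the fact that $xy^{-1}$ has relatively compact conjugacy class in $H$, not merely in $N$ — and that last point is exactly where one exploits $\bar L^0 = G^0/(\text{trivial})$ being Lie, so that relative compactness in $\bar L^0$ upgrades to relative compactness in $\bar L$ via Proposition 1.16 applied to the $\Int(\bar L)$-action. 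Finally, the openness claim $G^0B(G)$ open in $\pi_{G^0}^{-1}(B(G/G^0))$ falls out of the construction: on $\bar L$, the set $\bar L^0 B(\bar L)$ is open in $\bar L$ because $B(\bar L)$ already surjects onto the open-in-itself subgroup $B(\bar L/\bar L^0) = \pi_{\bar L^0}(\bar L)$-relevant part, and pulling back through $L\to\bar L$ and $G\to\ $ (the quotient identification) gives the statement; here one uses that a compactly generated $\FC$-group has its $\FC$-centre co-compact, so $B$ of the relevant totally-disconnected-modulo-$\bar L^0$ quotient is open.
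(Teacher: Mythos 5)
Your plan breaks down at the very first reduction. The group $L=\pi_{G^0}^{-1}(B(G/G^0))$ is in general neither open nor almost connected: $L/G^0\cong B(G/G^0)$ is only assumed closed, not compact or open, and Proposition~\ref{pr18} makes no compact generation hypothesis, so the appeal to ``compact generation of the quotient groups appearing'' has no basis (already $G=\Z$ discrete gives $L=G$ not almost connected; $B(G/G^0)$ closed but not open is equally easy). Consequently Lemma~\ref{le17} cannot be applied to $L$, and the maximal compact normal subgroup $K_L$ on which the whole construction rests need not exist at all: for $G=\Q/\Z$ discrete, the compact (= finite) normal subgroups form an increasing family with non-compact union, so there is no maximal one. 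The idea you are missing is the intermediate almost connected group that the paper manufactures. After first factoring out the maximal compact normal subgroup of $G^0$ (legitimate by \cite{Ti}\;Prop.\,1), so that $G^0$ is a Lie group and Proposition~\ref{pr16} applies to it, one uses that $M=B(G/G^0)$ is a closed, totally disconnected normal subgroup which is an $\FC_{\Cal B}$\,-\,group for the action $\Cal B$ of the inner automorphisms of $G/G^0$: a compact open subgroup $K_1$ of $M$ is periodic, the union of its $\Cal B$-translates is relatively compact (\cite{Wa}\;Prop.\,1.3), and the closed subgroup $K_2$ it generates is compact (\cite{GM}\;Th.\,3.11), open in $M$ and normal in $G/G^0$. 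Its preimage $H$ in $G$ is a closed normal subgroup with $H/G^0$ compact and $H$ open in $\pi_{G^0}^{-1}(B(G/G^0))$; \emph{this} is the group to which Lemma~\ref{le17} is applied. Its maximal compact normal subgroup $K$ is characteristic in $H$, hence normal in $G$, hence $K\subseteq B(G)$, and $G^0K$ is open in $\pi_{G^0}^{-1}(B(G/G^0))$; together with $B(G)\cap G^0=B_{\Cal B_1}(G^0)$ closed (Proposition~\ref{pr16}) this gives $B(G)\cap (G^0K)=(B(G)\cap G^0)K$ closed, whence $B(G)$ is closed (\cite{HR}\;Th.\,5.9) and $G^0B(G)\supseteq G^0K$ is open in the preimage.

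Independently of this, the ``general extension lemma'' you sketch is not sound as stated: you infer that $\pi_N(B(H))$ is closed from $\pi_N(B(H))\subseteq B(H/N)$ with $B(H/N)$ closed, but a subset of a closed set need not be closed, and without that you cannot pick a lift $y\in B(H)$ over $\pi_N(x)$ for a limit point $x$; the co-compactness hypothesis you insert to control the fibres is exactly what is not available and is never verified. Likewise the concluding openness argument leans on ``a compactly generated $FC$-group has its $FC$-centre co-compact,'' which is both unavailable here (no compact generation) and not the actual mechanism: in the paper the openness of $G^0B(G)$ comes from the compact \emph{open} normal subgroup $K_2\subseteq M$ produced by the saturation argument, i.e.\ from $K\subseteq B(G)$ with $G^0K$ open in $\pi_{G^0}^{-1}(B(G/G^0))$, not from any co-compactness of $FC$-centres.
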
\noindent
$\pi_{G^0}\!:G\to G/G^0$ denotes the canonical projection. It is quite
obvious that $B(G)\subseteq\pi_M^{-1}(B(G/M))$ for any closed normal subgroup
$M$ of $G$\,.
\begin{proof}
Let $M$ be any l.c.\,group, $\Cal B$ some subgroup
of $\Aut(M)$ containing $\Int(M)$\,. If $K_0$ is a relatively compact
$\Cal B$-invariant subset of $M$ \,and periodic (i.e. each $x\in K_0$ belongs
to some compact subgroup of $M$), then the closed subgroup generated by
$K_0$ is compact and  $\mathcal{B}$-invariant (\cite{GM}\;Th.\,3.11(1)\,).
Furthermore, if $M$ is
an $\FC_{\Cal B}$\,-\,group and $K_1$ is any relatively compact subset of $M$
then by \cite{Wa}\;Prop.\,1.3, $K_0=\bigcup\limits_{\beta\in\Cal B}\beta(K_1)$
is relatively compact. If $K_1$ is also periodic it follows that the closed
$\Cal B$-invariant subgroup generated by $K_0$ is compact.
\\
Let $K$ be the maximal compact normal subgroup of $G^0$.
This is also normal in $G$ and by \cite{Ti}\;Prop.\,1 it will be enough
to prove the Theorem for $G/K$, i.e., we may assume that $K$ is trivial.
Then $G^0$ is a Lie group.
\\
By assumption $M=B(G/G^0)$ is a
closed subgroup of $G/G^0$ and clearly normal. 
Let $\Cal B$ be the subgroup of $\Aut(M)$ obtained by restricting the
inner automorphisms of $G/G^0$ to $M$. Then $M$ is an
$\FC_{\Cal B}$\,-\,group.
$M$ is totally disconnected, hence it has an open compact subgroup
$K_1$\,.  Let $K_2$ be the closed subgroup generated by
$\bigcup\beta(K_1)$. By the observation above, $K_2$ is compact,
it is clearly open in $M$ and normal in $G/G^0$ \,(this might have been also
obtained from \cite{WY}\;Th.\,1, similarly as in the proof of
Proposition\;\ref{pr16}) .
\\
We have $K_2=H/G^0$ for some closed normal subgroup $H$ of $G$. Then
$H$ is open in $\pi_{G^0}^{-1}(B(G/G^0))$ and
$H/G^0$ is compact. Let $K$ be the maximal compact normal
subgroup of $H$ (Lemma\;\ref{le17}). Then $G^0K$ is open in $H$ (clearly, we
have $H^0=G^0$).
$K$ is normal in $G$ and this implies $K\subseteq B(G)$\,.
\\
Applying Proposition\;\ref{pr16}, by taking $\Cal B_1$ all restrictions of
inner automorphisms of $G$ to $G^0$\,, we see that
\,$B(G)\cap G^0=B_{\Cal B_1}(G^0)$ is closed. This implies that\linebreak
$B(G)\cap (G^0K)=(B(G)\cap G^0)\,K$ is closed. $G^0K$ being open in
$\pi_{G^0}^{-1}(B(G/G^0))$, it follows that $B(G)$ is closed
\,(e.g. by \cite{HR}\;Th.\,5.9).
\end{proof}
We want to translate now Trofimov's graph theoretical setting (in particular
\cite{Mo2}\;L.\,5) into algebraic combinatorial language, continuing
work done in \cite{Mo2}, \cite{Mo1}.
\begin{Lem}	 \label{le19}	
Let $H$ be a compactly generated, totally disconnected l.c.\,group,
$K$~an open compact subgroup and $\Cal F$ a family of closed normal
subgroups of $H$. Then there exists $F_0\in\Cal F$ such that
\,$F\cap(F_0K)$ \,is normal in $H$ for each $F\in\Cal F$ with
$F\supseteq F_0$\,.
\end{Lem}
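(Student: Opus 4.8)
The plan is to use the descending chain condition for open compact subgroups together with the constraint coming from the compactly generated, totally disconnected structure of $H$. First I would fix the open compact subgroup $K$ and consider, for each $F\in\Cal F$, the open compact subgroup $F\cap K$ of the closed subgroup $F$; equivalently, pass to the images in the discrete quotient by noting that $FK$ is an open subgroup of $H$ and $F\cap K$ is a compact open subgroup of $F$. Since $H$ is compactly generated and totally disconnected, by van Dantzig's theorem it has a compact open subgroup, and more relevantly the set of compact open subgroups commensurable with $K$ (together with the induced topology on quotients $H/F$, which are compactly generated totally disconnected, hence discrete-mod-compact) satisfies strong finiteness conditions: for $F\supseteq F_0$ the group $F\cap(F_0K)/F_0$ sits inside the compact group $F_0K/F_0$, which is a subgroup of the discrete-ish quotient $H/F_0$ only after a further compact reduction — the technical heart is to set things up so that ``$F\cap(F_0K)$ normal in $H$'' becomes a condition that can only fail finitely often as $F$ ranges upward.

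The key mechanism I would exploit is the following: the conjugation action of $H$ on $F_0K$ factors through the compact group $K/(K\cap F_0K)$... more cleanly, since $F_0$ and $K$ are both ``small'' (one normal, one compact open), the subgroup $F_0K$ is commensurated by $H$, and $H/F_0$ is a compactly generated totally disconnected group in which $F_0K/F_0$ is a compact open subgroup. The obstruction to $F\cap(F_0K)$ being normal is measured by how the (finitely many, by compact generation modulo a compact subgroup) conjugates $hF_0K h^{-1}$ interact with $F$; one reduces to a statement about a \emph{single} compact open subgroup $U = F_0K/F_0$ of the compactly generated t.d.\ group $\overline H = H/F_0$ and the family $\overline{\Cal F}$ of images of the $F\supseteq F_0$, where one wants $\overline F\cap U$ normal in $\overline H$. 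Here $\overline F\cap U$ ranges over a set of compact subgroups of $U$, and the crucial finiteness input is that $U$ has only finitely many $\overline H$-conjugacy classes of open subgroups of bounded index, or more directly: the normal core of $\overline F\cap U$ in $\overline H$ has index in $\overline F\cap U$ bounded in terms of the (finite) number of generators of $\overline H$ modulo $U$ — so as $\overline F$ grows, $\overline F\cap U$ stabilizes up to finite index, and among a chain of such one lands on a genuinely normal one.

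Concretely the steps are: (1) reduce to $F_0\subseteq$ all relevant $F$ being irrelevant by first choosing any $F_1\in\Cal F$, replacing $\Cal F$ by $\{F\in\Cal F: F\supseteq F_1\}$ (if this is empty the statement is trivial for $F_0=F_1$), and working in $H/F_1$; (2) observe $F_1K/F_1$ is compact open in the compactly generated t.d.\ group $H/F_1$, so $[H/F_1 : F_1K/F_1]$-related generation gives finitely many double coset representatives $h_1,\dots,h_r$ of $F_1K/F_1$ generating $H/F_1$; (3) for $F\supseteq F_1$ the set $F\cap(F_1K)$ is $F$-invariant automatically and $K$-invariant (since $K$ normalizes $F_1K$ only up to... actually $K$ normalizes $F_1K$ because $F_1\lhd H$ so $F_1K$ is a group and $K\le F_1K$), so normality of $F\cap(F_1K)$ in $H$ reduces to invariance under the finitely many $h_j$; (4) each $h_j$ permutes the compact subgroups of $F_1K/F_1$ of a given ``size'', and since there is no infinite properly ascending chain of compact \emph{open} subgroups inside the compact group $F_1K/F_1$ that are permuted consistently, along any ascending chain $F\subseteq F'\subseteq\cdots$ in $\Cal F$ the groups $h_jFh_j^{-1}\cap(F_1K)$ versus $F\cap(F_1K)$ eventually agree; picking $F_0$ far enough up this chain (using that $\Cal F$ is a family, not necessarily a chain, one uses Zorn/maximality of $F_0\cap(F_1K)$ among the compact subgroups $\{F\cap(F_1K): F\in\Cal F\}$ of $F_1K$, which is legitimate by compactness) yields the desired $F_0$.

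The main obstacle I anticipate is precisely step (4): $\Cal F$ is only a \emph{family}, so there need not be a largest element, and one must extract $F_0$ so that \emph{every} larger $F$ in the family behaves well simultaneously. The resolution is to choose $F_0$ maximizing $F_0\cap (F_1 K)$ (this is a compact subgroup of the compact group $F_1K/F_1$, and any chain of such has an upper bound inside $F_1K$, itself of the form $F\cap(F_1K)$ by a direct-limit argument using that $\Cal F$ is closed under... if it is not closed under joins, one instead argues that the possible groups $F\cap(F_1K)$ have a maximal element by a rank/dimension bound coming from compactness plus the t.d.\ structure, avoiding Zorn). Once $F_0\cap(F_1K)$ is maximal, any $F\supseteq F_0$ has $F\cap(F_1K)\supseteq F_0\cap(F_1K)$ and equality by maximality — wait, that needs $F\cap(F_1K)$ itself to be among the candidates, which it is; so $F\cap(F_1K)=F_0\cap(F_1K)$, and then normality of this fixed group in $H$ follows from a separate, easier argument (it is the intersection of the $F_0$-core, which one arranges). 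I would allocate most of the writing effort to making this maximality argument airtight, since everything else is bookkeeping with van Dantzig's theorem and the compact-open structure.
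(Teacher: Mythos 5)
There is a genuine gap, and it sits exactly where you yourself put the weight: the maximality argument in step (4). You want $F_0$ chosen so that $F_0\cap(F_1K)$ is maximal among the compact subgroups $\{F\cap(F_1K):F\in\Cal F,\ F\supseteq F_1\}$ of $F_1K$, and you then conclude $F\cap(F_1K)=F_0\cap(F_1K)$ for all larger $F\in\Cal F$. But this poset need not contain a maximal element: closed subgroups of a profinite group do not satisfy the ascending chain condition, $\Cal F$ is an arbitrary family (not closed under joins), the closure of the union of a chain of sets $F\cap(F_1K)$ need not be of the form $F'\cap(F_1K)$ with $F'\in\Cal F$, and there is no ``rank/dimension bound'' for compact totally disconnected groups that could replace Zorn. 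Concretely, take $H=W\rtimes\Z$ with $W=\prod_{n\in\Z}\Z/2\Z$ and $\Z$ acting by the shift, $K=W$, and $\Cal F=\{P_{2^k}:k\ge0\}$ where $P_m=\{x\in W:x_{n+m}=x_n\text{ for all }n\}$; these are closed normal subgroups forming a strictly increasing chain, $F\cap(F_1K)=F$ for every $F$, and no choice of $F_0$ makes this stabilize --- yet the lemma holds trivially here, since each $F\cap(F_0K)=F$ is normal. So the stabilization you aim for is unattainable in general and is also not what the lemma asserts. Moreover, even granting such a stabilized group $P$, normality in $H$ would not follow: for $h$ outside the normalizer of $F_1K$ the conjugate $hPh^{-1}$ lies in $h(F_1K)h^{-1}$, not in $F_1K$, and your ``separate, easier argument'' for this step is never supplied (your step (3) claim that $F\cap(F_1K)$ is automatically $F$-invariant fails for the same reason). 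That missing step is the actual content of the lemma.

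The paper avoids both problems by minimizing a finite quantity instead of maximizing a subgroup: fix a finite set $M$ with $H=\langle MK\rangle_s$ (subsemigroup generated) and $KM\subseteq MK$, and choose $F_0\in\Cal F$ minimizing the number of cosets $\lvert\pi_{F_0K}(M)\rvert$, an integer between $1$ and $\lvert M\rvert$, so the minimum is attained with no chain condition whatsoever. For $F\supseteq F_0$ this minimality forces $\pi_{FK}$ to be injective on a transversal $M_0\subseteq M$, and a short computation using $KM_0\subseteq M_0F_0K$ and normality of $F,F_0$ shows $x^{-1}yx\in F\cap(F_0K)$ for $y\in F\cap(F_0K)$ and $x\in M_0$; conjugation by elements of $F_0K$ is easy, and induction over $H=\langle M_0F_0K\rangle_s$ yields normality in all of $H$. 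If you want to salvage your outline, you must replace the subgroup-maximization by the minimization of a finite invariant of this kind; as it stands, steps (4) and the concluding normality claim do not go through.
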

\begin{proof}
For $M$ a subset of $H$ we denote by $\langle M\rangle_s$ the subsemigroup
generated (algebraically) by $M$\,, as before,
$\pi_K\!:H\to H/K$\;(left cosets) shall be the canonical projection.
Since $H$ is compactly generated, $K$ open, there exists a finite subset
$M\subseteq H$ such that $H=\langle MK\rangle_s$\,.
The $K$-orbits in $H/K$ are finite ($H/K$ is discrete, $K$ compact), hence we
can enlarge $M$ (including members of additional cosets) so that
$KM\subseteq MK$ \,(this implies also that
$\langle MK\rangle_s=\langle M\rangle_sK$\,, compare \cite{Mo1}\;L.\,2).
We fix such a finite set $M$ and choose $F_0\in\Cal F$ such that
the cardinality $\lvert\pi_{F_0K}(M)\rvert$ becomes minimal among all
$F\in\Cal F$\,. Then (selecting one element from each coset) we take a
minimal subset $M_0\subseteq M$ such that
$\pi_{F_0K}(M_0)=\pi_{F_0K}(M)$\,. It follows that
$H=\langle M_0F_0K\rangle_s\,,\ KM_0\subseteq M_0F_0K$\,.
\\[1mm]
Consider now $F\in\Cal F$ with $F\supseteq F_0$\,. By the choice
of $F_0\,,M_0$\,, we have
$\lvert\pi_{FK}(M)\rvert=\lvert\pi_{F_0K}(M)\rvert=
\lvert\pi_{F_0K}(M_0)\rvert=\lvert M_0\rvert$\,.
Thus, if $x,x'\in M$ and
$xF_0K\neq x'F_0K$ then $xFK\neq x'FK$ which leads to
$\lvert\pi_{FK}(M_0)\rvert=\lvert M_0\rvert$\,, i.e.,
$\pi_{FK}$ is injective on $M_0$\,. Take $y\in F\cap(F_0K)\,,\ x\in M_0$\,.
Then $y=y_0y_1$ with $y_0\in F_0\,,\ y_1\in K$\,. It follows that
$y_1x=x_1y_2$ with $x_1\in M_0\,,\ y_2\in F_0K$\,. Thus
$\pi_{FK}(x)=\pi_{FK}(yx)=\pi_{FK}(x_1)$ and by injectivity $x_1=x$\,.
We conclude that $yx=y_0xy_2$\,, hence $x^{-1}yx\in F\cap(F_0K)$\,.
For $x\in F_0K$\, obviously $x^{-1}yx\in F\cap(F_0K)$ and by induction
it follows that $x^{-1}yx\in F\cap(F_0K)$ holds for all $x\in H$\,. 
\end{proof}
\begin{proof}[Proof of Theorem\;\ref{th15}]
The theorem follows immediately by combining the result of
M\"oller--Trofimov for compactly generated totally disconnected groups
(\cite{Mo1}\ Th.\,2) with Proposition\;\ref{pr18}. We include an argument
based on Lemma\;\ref{le19}. Let $G$ be a compactly generated totally
disconnected l.c.\,group, $K$ an open compact subgroup.
$\Cal F$ shall denote the family of all compact
normal subgroups, we put $G_1=\bigcup_{F\in\Cal F}F$\,. Then $G_1$ is a
subgroup
of $G$\,. By the observation in the proof of Proposition\;\ref{pr18}, each
$x\in B(G)\cap K$ belongs to some group $F\in\Cal F$\,, hence
$B(G)\cap K=G_1\cap K$\,. By Lemma\;\ref{le19} there exists $F_0\in\Cal F$
such that $F\cap(F_0K)$ \,is normal in $G$ for each $F\in\Cal F$ with
$F\supseteq F_0$\,. Put $F_1=\bigcap_{x\in G}xF_0Kx^{-1}$. Then $F_1$
is a compact normal subgroup and $F\cap K\subseteq F_1$ for each
$F\in\Cal F$\,. Thus $G_1\cap K=F_1\cap K$ is closed, hence (e.g. by
\cite{HR}\;Th.\,5.9) $B(G)$ is closed.
\end{proof}
\begin{Def} \label{def110}	    
Let $G$ be a l.c.\,group. Analogously to \cite{Ro}\;p.\,129 we define the
(topological)
{\it upper $\FC$-\,central series} by $B_0(G)=(e)\,,\;B_1(G)=\overline{B(G)}\,,
\linebreak
B_{\alpha+1}(G)/B_{\alpha}(G)=\overline{B(G/B_{\alpha}(G))}$
for ordinals $\alpha$ and
$B_{\lambda}(G)=\overline{\bigcup_{\alpha<\lambda}B_{\alpha}(G)}$ for limit
ordinals $\lambda$\,. The least ordinal $\alpha$ such that
$B_{\alpha+1}(G)=B_{\alpha}(G)$ is called the {\it $\FC$-\,class} of $G$
\,(or $\FC$-rank as in \cite{FF}). $\zeta_{\FC}(G)=B_{\alpha}(G)$ the
{\it $\FC$-\,hypercentre}. If $\zeta_{\FC}(G)=G$ then $G$ is called
{\it $\FC$-\,hypercentral}.
\\
By induction, it is easy to see that if $(e)=G_0\subseteq G_1\subseteq
\dots$ is any series of closed normal subgroups of $G$ such that
$G_{i+1}/G_i$ is an $\FC_G$\,-\,group for $i\ge0$\,, then
$G_i\subseteq B_i(G)$ for $i\ge0$ (similarly for ordinals).
$G/\zeta_{\FC}(G)$ is the topological analogue of an ICC-group,
i.e. it has no non-trivial relatively compact conjugacy classes.
\end{Def}
\begin{Pro}	\label{pr111}	
Let $G$ be a compactly generated l.c.\,group, $K$ an open compact subgroup
of $G/G^0$. Then there exists $n_0\in\N$ such that
$G^0\cap B_n(G)\subseteq B_{n_0}(G)$ and
$B_n(G)\cap \pi_{G^0}^{-1}(K)\subseteq B_{n_0}(G)$ for all
$n\ge n_0$\,.
\\
$\bigcup_{n\in\N}B_n(G)$ is closed. $B_{n_0}(G)$ is open in $B_n(G)$ for
$n\ge n_0$ and also in $B_{\omega_0}(G)$\,.
\vspace{-1.5mm}
\end{Pro}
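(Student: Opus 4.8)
The strategy is to reduce the statement for $G$ to a statement about the totally disconnected quotient $G/G^0$, treat that case by the semigroup/cardinality argument underlying Lemma \ref{le19}, and then reassemble, using Proposition \ref{pr18} to control the identity component at each step of the transfinite $\FC$-series. First I would set $K_1=\pi_{G^0}^{-1}(K)$, an open subgroup of $G$ containing $G^0$ with $K_1/G^0$ compact; since $G$ is compactly generated and $G^0$ is normal, $G/G^0$ is a compactly generated totally disconnected group, and $K$ is an open compact subgroup of it. The point is to run a bounded-step argument for the images $B_n(G/G^0)$ inside $G/G^0$: I would apply Lemma \ref{le19} with $H=G/G^0$, $K$ the given open compact subgroup, and $\Cal F$ the family of (closures of) the groups $B_n(G/G^0)$ — these are closed normal subgroups by Theorem \ref{th15} applied successively to the quotients. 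Lemma \ref{le19} yields an index $n_1$ with $\overline{B_n(G/G^0)}\cap(\overline{B_{n_1}(G/G^0)}\,K)$ normal in $G/G^0$ for all $n\ge n_1$; combined with the fact (from the proof of Proposition \ref{pr18}, via \cite{GM}\;Th.\,3.11) that $\overline{B(\,\cdot\,)}\cap K$ is periodic and generates a compact normal subgroup, one pushes the stabilization down to $K$: there is $n_1$ with $B_n(G/G^0)\cap K\subseteq B_{n_1}(G/G^0)$ for all $n$, and hence (since $K$ is open and cosets are finite, as in Lemma \ref{le19}) with $B_n(G/G^0)\cap K_1^{\,\prime}$ stabilizing for the preimage of $K$ in $G/G^0$, i.e.\ the totally disconnected $\bigcup_n B_n(G/G^0)$ is closed.

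Next I would lift this to $G$. The inclusion $B_n(G)\subseteq\pi_{G^0}^{-1}\bigl(B_n(G/G^0)\bigr)$ is the elementary remark recorded after Proposition \ref{pr18} (applied inductively along the series, using that $\pi_{G^0}$ induces $G/B_\alpha(G)\to (G/G^0)/(\text{image})$ and that relatively compact conjugacy classes map to relatively compact ones). So once the images stabilize at stage $n_1$ modulo $G^0$, every $B_n(G)$ with $n\ge n_1$ lies in $\pi_{G^0}^{-1}\bigl(B_{n_1}(G/G^0)\bigr)$, a fixed closed subgroup $H$ of $G$ with $H/G^0$ compact (after intersecting with $K_1$ — here one uses that $B_{n_1}(G/G^0)\cap K$ is the relevant compact piece, so $\pi_{G^0}^{-1}(B_{n_1}(G/G^0))\cap K_1$ has compact quotient by $G^0$). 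Now apply Lemma \ref{le17} to this $H$: it has a maximal compact normal subgroup $K_H$, characteristic in $H$, and $H^0=G^0$, with $H^0/(K_H\cap H^0)$ a Lie group. Factoring out $K_H$ (which is contained in $B(G)\subseteq B_1(G)$ since it is normal in $G$), we are reduced to the case where $G^0$ is a Lie group and the relevant preimage group has $G^0$ itself as a finite-index-up-to-compact open part.

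In that Lie situation the identity component is handled by Proposition \ref{pr16}: restricting inner automorphisms of $G$ (or of $B_n(G)$) to the Lie group $G^0$ gives $B_n(G)\cap G^0=B_{\Cal B_n}(G^0)$ for a suitable subgroup $\Cal B_n\subseteq\Aut(G^0)$ containing $\Int(G^0)$, and these are closed subgroups of the Lie group $G^0$; since $G^0$ is a Lie group, its closed subgroups satisfy the descending chain condition on connected components and dimension, so the chain $B_n(G)\cap G^0$ stabilizes after finitely many steps — say at $n_2$. Setting $n_0=\max(n_1,n_2)$ (after restoring $K_H$, which only shifts indices by $1$), I get $G^0\cap B_n(G)\subseteq B_{n_0}(G)$ and $B_n(G)\cap\pi_{G^0}^{-1}(K)\subseteq B_{n_0}(G)$ for all $n\ge n_0$, because an element of $B_n(G)\cap K_1$ has its $G^0$-part in $B_{n_0}(G)\cap G^0$ and its image in $B_{n_0}(G/G^0)\cap K$, and a standard two-step extension argument (as at the end of the proof of Proposition \ref{pr18}, via \cite{HR}\;Th.\,5.9) shows the product is in $B_{n_0}(G)$. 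Then $\bigcup_{n}B_n(G)=B_{n_0}(G)$ up to the compact-open normal subgroup contributions, hence closed; $B_{n_0}(G)$ is open in each $B_n(G)$ for $n\ge n_0$ because they share the open subgroup $B_{n_0}(G)\cap K_1$ up to finite index, and likewise open in $B_{\omega_0}(G)=\overline{\bigcup_n B_n(G)}$.

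The main obstacle is the transfer of the cardinality-stabilization of Lemma \ref{le19} from the abstract family $\Cal F$ to the specific tower $\bigl(\overline{B_n(G/G^0)}\bigr)$ while keeping track of compactness: Lemma \ref{le19} only gives normality of $F\cap(F_0K)$, not that the $\FC$-series itself stabilizes, so one must additionally argue that the increments $B_{n+1}(G/G^0)/B_n(G/G^0)$ — being $\FC_{G/G^0}$-groups that are totally disconnected — contribute only compact pieces intersecting $K$ once the normalized intersection has stabilized, i.e.\ that no genuinely new relatively compact conjugacy class can appear beyond stage $n_1$ inside $K$. This is exactly the periodicity-plus-compact-generation input from \cite{GM}\;Th.\,3.11 and \cite{Wa}\;Prop.\,1.3 recalled in the proof of Proposition \ref{pr18}, but splicing it correctly into the transfinite induction — and checking that the bound $n_0$ obtained is genuinely finite rather than merely countable — is the delicate point; the Lie-group part via Proposition \ref{pr16} is comparatively routine once the descending chain condition on closed subgroups of a Lie group is invoked.
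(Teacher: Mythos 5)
Your treatment of the connected part is where the argument genuinely fails. You reduce it to ``the descending chain condition on closed subgroups of a Lie group'', but the chain in question, $H_n=G^0\cap B_n(G)$, is \emph{ascending}, and ascending chains of closed subgroups of a Lie group need not stabilize (think of $2^{-n}\Z$ inside $\R$, or the finite cyclic subgroups of a torus): once the dimension has become constant there is no chain condition to invoke. This is exactly the hard point of the paper's proof, which after choosing $n_1$ with $\dim(H_{n_1})$ maximal observes that the increments $H_n/H_{n_1}$ are discrete normal, hence central, in the connected group $G^0/H_{n_1}$, applies Tits' theorem (\cite{Ti}\,Th.\,1) to see that $(H_{n_1+1}/H_{n_1})\cap N_0$ is connected and therefore trivial ($N_0$ the connected nil-radical), so that the increments meet the identity component $H^0$ of the centre $H=Z(G^0/H_{n_1})$ trivially, and only then uses that $H$ is compactly generated, so $H/H^0$ is a finitely generated abelian group and the ascending chain condition holds there. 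Without an argument of this kind the stabilization of $G^0\cap B_n(G)$ -- and with it the finiteness of $n_0$ -- is simply not established.

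The totally disconnected half has two problems as well. First, you apply Lemma\;\ref{le19} to the series $B_n(G/G^0)$ of the quotient, whereas what is needed (and what the paper uses) is the family of images $F_n=\pi_{G^0}(B_n(G))$: even if $B_n(G/G^0)\cap K$ stabilized, an element $x\in B_n(G)\cap\pi_{G^0}^{-1}(K)$ would only have its image in $B_{n_1}(G/G^0)$, which can be strictly larger than every $\pi_{G^0}(B_m(G))$, so this would not place $x$ in $G^0B_{n_0}(G)$. Second, and more importantly, you never actually derive any stabilization from Lemma\;\ref{le19}; your closing paragraph explicitly defers ``the delicate point'' of showing that no new relatively compact classes appear inside $K$ beyond some finite stage. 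The paper closes this gap by a concrete device absent from your plan: with $B=B_{n_3}(G)$ it forms the compact normal subgroup $K_1$ of $G/(G^0B)$ obtained by intersecting the conjugates of the image of $K$, so that $K_1\subseteq B(G/(G^0B))$; the openness statement of Proposition\;\ref{pr18} (applied to $G/B$) shows that $\pi_{G^0B}(B_{n_3+1}(G))\cap K_1$ has \emph{finite index} in the compact group $K_1$, and choosing $n_0$ so as to minimize this index forces $\pi_{G^0B}(B_n(G))\cap K_1$ to be constant for $n\ge n_0$; combined with the normality conclusion of Lemma\;\ref{le19} this yields $(G^0B_n(G))\cap\pi_{G^0}^{-1}(K)\subseteq G^0B_{n_0}(G)$, after which the final assembly you sketch (using $G^0\cap B_n(G)=G^0\cap B_{n_0}(G)$ and \cite{HR}\,Th.\,5.9) does go through. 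As it stands, both the Lie-group step and the finite-index stabilization step are missing, and these are the substance of the proposition.
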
\noindent
$\omega_0$ denotes the smallest countable ordinal.\\
Thus (combining with Theorem\;\ref{th15}) for a compactly generated
l.c.\,group
\\ $B_{n+1}(G)/B_n(G)=B(G/B_n(G))$ holds for all $n\in\N$ and
$B_{\omega_0}(G)=\bigcup_{n\in\N}B_n(G)$\,.
\begin{proof}
By Theorem\;\ref{th15} the groups $B(G/B_n(G))$ are closed. By induction
one gets from Proposition\;\ref{pr18} that $G^0B_n(G)$ is closed which
implies (\cite{HR}\;Th.\,7.12 and Th.\,5.33) that
\,$(G/B_n(G))^0=(G^0B_n(G))/B_n(G)\cong G^0/(G^0\cap B_n(G))$\,.
\\
As before, we can assume that $G^0$ is a Lie group. Put $H_n=G^0\cap B_n(G)$\,.
This is an increasing sequence of closed normal subgroups. Take $n_1$ such
that $\dim(H_{n_1})$ is maximal. Put $H=Z(G^0/H_{n_1})$ (centre), $N_0$
shall be the connected nil-radical of $G^0/H_{n_1}$\,.
Then for $n\ge n_1\,,\ H_n/H_{n_1}$ is discrete, hence
$H_n/H_{n_1}\subseteq H$\,. $G^0/H_{n_1}$ has no non-trivial connected compact
normal subgroups (these would contribute to $H_{n_1+1}$) and
$H_{n_1+1}/H_{n_1}\cong B(G/B_{n_1}(G))\cap (G/B_{n_1}(G))^0$\,. Hence by
\cite{Ti}\;Th.\,1 (or \cite{Lo3a}\;p.\,10) \;$(H_{n_1+1}/H_{n_1})\cap N_0$
is connected, thus it must be trivial. Since $H^0\subseteq N_0$ it follows
that
$(H_{n_1+1}/H_{n_1})\cap H^0$ is trivial. Similarly, $(H_n/H_{n_1})\cap H^0$
is trivial for all $n\ge n_1$ \,(the connected nil-radical of $G^0/H_n$
is just the image of $N_0$). $H$ is a compactly generated group (e.g. by
\cite{Ho}\;Th.\,XVI.1.2). Hence $H/H^0$ is a finitely generated abelian group
and these satisfy the ascending chain condition for subgroups. It follows
that there exists $n_2$ such that
$G^0\cap B_n(G)=H_n=H_{n_2}=G^0\cap B_{n_2}(G)$ for all $n\ge n_2$\,.
\\
Put $F_n=\pi_{G^0}(B_n(G))\ \:(n\in\N)$. These are closed normal subgroups of
$G/G^0$.
By Lemma\;\ref{le19} there exists $n_3$ such that $F_n\cap (F_{n_3}K)$ is
normal in $G/G^0$ for all $n\ge n_3$\,. Put
$B=B_{n_3}(G)\,,\ F=F_{n_3}=(G^0B)/G^0$\,. Then $(G/B)^0=(G^0B)/B$ and
$(G/B)\big/(G/B)^0\cong G/(G^0B)\cong (G/G^0)\big/F$\,. By
$\pi_{G^0\!B\mspace{-1.5mu},\mspace{1.5mu}G^0}\!:G/G^0\to G/(G^0B)$ we
denote the quotient mapping (identifying $G/(G^0B)$ with
$(G/G^0)\big/((G^0B)/G^0)=(G/G^0)\big/F$\,\,).
We have $\ker(\pi_{G^0\!B\mspace{-1.5mu},\mspace{1.5mu}G^0})=F\,,\
\ker(\pi_{G^0\!B\mspace{-1.5mu},\mspace{1.5mu}B})=(G/B)^0$ and taking care
of the isomorphisms we get
$\pi_{G^0\!B\mspace{-1.5mu},\mspace{1.5mu}B}\circ\pi_B=
\pi_{G^0\!B}=\pi_{G^0\!B\mspace{-1.5mu},\mspace{1.5mu}G^0}\circ\pi_{G^0}$\,.
Put $K_1=\bigcap_{x\in G/(G^0B)}
x\:\pi_{G^0\!B\mspace{-1.5mu},\mspace{1.5mu}G^0}(K)\:x^{-1}$\,.
Then $K_1$ is a compact normal subgroup of $G/(G^0B)$\,, hence
$K_1\subseteq B(G/(G^0B))$ and by the choice of $n_3$ we have for $n\ge n_3$
that
$\pi_{G^0\!B\mspace{-1.5mu},\mspace{1.5mu}G^0}(F_n\cap (FK))=
\pi_{G^0\!B\mspace{-1.5mu},\mspace{1.5mu}G^0}(F_n)\cap K_1$\,.
Furthermore, by Proposition\,\ref{pr16},\linebreak
\,$\pi_B(B_{n_3+1}(G))=B_{n_3+1}(G)/B=B(G/B)$
is open in $\pi_{G^0\!B\mspace{-1.5mu},\mspace{1.5mu}B}^{-1}(B(G/(G^0B)))$
\,(using $(G/B)\big/(G/B)^0\cong G/(G^0B)$\,). Hence
$\pi_{G^0\!B}(B_{n_3+1}(G))$
is open in $B(G/(G^0B))$ and it follows that
$\pi_{G^0\!B}(B_{n_3+1}(G))\cap K_1$ has finite index in $K_1$\,. Now choose
$n_0\ge n_2,n_3$ such that $[\,K_1\,:\,\pi_{G^0\!B}(B_{n_0}(G))\cap K_1\,]$ is
minimal. Then 
\,$\pi_{G^0\!B}(B_n(G))\cap K_1=\linebreak\pi_{G^0\!B}(B_{n_0}(G))\cap K_1$
\,for $n\ge n_0$\,. We have \;$\pi_{G^0\!B}(B_n(G))\cap K_1=
\pi_{G^0\!B\mspace{-1.5mu},\mspace{1.5mu}G^0}(F_n)\cap K_1=\linebreak
\pi_{G^0\!B\mspace{-1.5mu},\mspace{1.5mu}G^0}(\,F_n\cap(FK)\,)=
\pi_{G^0\!B}\bigl(\,(\,G^0B_n(G)\,)\cap (\,B\,\pi_{G^0}^{-1}(K)\,\bigr)$
\,and it follows that
\linebreak
$(G^0B_n(G))\cap \pi_{G^0}^{-1}(K)\subseteq G^0B_{n_0}(G)$\,.
\\
Take \,$x\in B_n(G)\cap \pi_{G^0}^{-1}(K)$\,, then $x=yz$ with
$y\in G^0\,,\ z\in B_{n_0}(G)$\,. Since
$G^0\cap B_n(G)=G^0\cap B_{n_0}(G)$ for $n\ge n_0\ge n_2$
\,and $y\in G^0\cap B_n(G)$\,, we get\linebreak
\,$x\in B_{n_0}(G)$\,. \,Thus
\,$B_n(G)\cap \pi_{G^0}^{-1}(K)=B_{n_0}(G)\cap \pi_{G^0}^{-1}(K)$\,.
Put \,$B_\infty(G)=\bigcup_{n\in\N}B_n(G)$ \,then
\,$B_\infty(G)\cap \pi_{G^0}^{-1}(K)=B_{n_0}(G)\cap \pi_{G^0}^{-1}(K)$
\,and since $\pi_{G^0}^{-1}(K)$ is open in~$G$\,, it follows by
\cite{HR}\;Th.\,5.9
that $B_\infty(G)$ is a closed subgroup. For $n\ge n_0\,,\ B_n(G)$ is open
in $B_\infty(G)$ and also in $B_m(G)$ for $m\ge n$\,.
\end{proof}
\begin{Thm}   \label{th112}	    
Let $G$ be a compactly generated l.c.\,group,
then the $\FC$-\,class of $G$ is at most $\omega_0$\,.
If $G$ is $\FC$-\,hypercentral then the $\FC$-\,class is finite.
\vspace{-1.5mm}
\end{Thm}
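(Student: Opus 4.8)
The plan is to deduce Theorem~\ref{th112} directly from Proposition~\ref{pr111} together with Theorem~\ref{th15}. The first claim---that the $\FC$-class is at most $\omega_0$---follows almost formally. By Proposition~\ref{pr111} there is an $n_0$ with $B_n(G)=B_{n_0}(G)$ intersected with the open set $\pi_{G^0}^{-1}(K)$ stable for $n\ge n_0$, and $B_{\omega_0}(G)=\bigcup_{n\in\N}B_n(G)$ is a closed subgroup with $B_{n_0}(G)$ open in it. First I would form $\bar G=G/B_{\omega_0}(G)$. Since $B_{\omega_0}(G)$ is closed, $\bar G$ is again a compactly generated l.c.\ group, and by the very definition of the series, $B_1(\bar G)=\overline{B(\bar G)}$; but $B(\bar G)$ corresponds to the relatively compact conjugacy classes in $G/B_{\omega_0}(G)$, which are trivial because any such class would, pulled back, produce an element of $B_{\omega_0+1}(G)$ beyond the union---more precisely, one applies Proposition~\ref{pr111} to $\bar G$, noting that its $B_n$-series has already stabilized at step $0$ by construction, so $B(\bar G)$ is a closed $\FC_{\bar G}$-group that must coincide with its own closure and hence be trivial by the stabilization already achieved at $\omega_0$. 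Therefore $B_{\omega_0+1}(G)=B_{\omega_0}(G)$, giving $\FC$-class $\le\omega_0$.

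For the second claim, suppose $G$ is $\FC$-hypercentral, i.e.\ $\zeta_{\FC}(G)=G$. Combined with the first part this forces $B_{\omega_0}(G)=G$, so $G=\bigcup_{n\in\N}B_n(G)$ is an increasing union of closed subgroups, each $B_{n_0}(G)$ open in the union for $n_0$ large (Proposition~\ref{pr111}). Since $G$ is compactly generated, a generating compact set $V$ is covered by finitely many cosets of the open subgroup $B_{n_0}(G)$, hence $V\subseteq B_m(G)$ for some finite $m\ge n_0$; but then $G=\langle V\rangle\subseteq B_m(G)$ (each $B_m(G)$ being a subgroup), so $G=B_m(G)$ and the $\FC$-class is the finite number $m$ (or less).

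The main obstacle is the first part: one must be careful that passing to the quotient $G/B_{\omega_0}(G)$ really does kill all relatively compact conjugacy classes, i.e.\ that $B_{\omega_0}(G)$ as defined (the closure of the countable union) is genuinely ``$\FC$-perfect'' from above. The clean way is to avoid quotients entirely and argue inside $G$: by Proposition~\ref{pr111}, $B_{n_0}(G)$ is open in $B_{\omega_0}(G)$, and one shows $B_{\omega_0+1}(G)$ also meets $\pi_{G^0}^{-1}(K)$ in $B_{n_0}(G)$ by running the same minimality arguments (on dimensions in $G^0$ and on indices of $K_1$) one step further---the point being that these quantities already attained their extremal values at $n_0$, so no new elements appear at stage $\omega_0+1$. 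Since $\pi_{G^0}^{-1}(K)$ is open, $B_{\omega_0+1}(G)\cap\pi_{G^0}^{-1}(K)=B_{\omega_0}(G)\cap\pi_{G^0}^{-1}(K)$ forces $B_{\omega_0+1}(G)=B_{\omega_0}(G)$ by \cite{HR}\;Th.\,5.9. This is really just the observation that Proposition~\ref{pr111} was stated for all $n\ge n_0$ and the bookkeeping extends verbatim to the limit ordinal, so the work is already essentially done there; the theorem is a short corollary.
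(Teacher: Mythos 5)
Your proof of the first (and main) assertion, $B_{\omega_0+1}(G)=B_{\omega_0}(G)$, has a genuine gap. The ``almost formal'' deduction is circular: saying that the $B_n$-series of $\bar G=G/B_{\omega_0}(G)$ ``has already stabilized at step $0$ by construction'' is precisely the statement to be proved. The definition of $B_{\omega_0}(G)$ as the closed union of the $B_n(G)$ gives no information whatsoever about $B(G/B_{\omega_0}(G))$, and Proposition~\ref{pr111} speaks only about the finite stages and the limit stage $\omega_0$, never about $\omega_0+1$. Your fallback (``argue inside $G$'') also fails at its final step: even granting that the minimality bookkeeping of Proposition~\ref{pr111} could be ``run one step further'' (it cannot be done verbatim -- at stage $\omega_0+1$ the dimension and index arguments would involve $B(G/B_{\omega_0}(G))$, the very object in question), the equality $B_{\omega_0+1}(G)\cap\pi_{G^0}^{-1}(K)=B_{\omega_0}(G)\cap\pi_{G^0}^{-1}(K)$ would only show, by the local argument via \cite{HR}\;Th.\,5.9, that $B_{\omega_0}(G)$ is an open (hence closed) subgroup of $B_{\omega_0+1}(G)$; an open subgroup need not be the whole group, so equality does not follow. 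Indeed, inside Proposition~\ref{pr111} the very same pattern yields only that $B_{n_0}(G)$ is open in $B_n(G)$, while the $B_n(G)$ do in general keep growing -- so this inference scheme is demonstrably insufficient.

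The missing idea, which is the paper's actual argument (modeled on \cite{FF}\;Th.\,1), uses compact generation directly at stage $\omega_0+1$: choose a compact $M$ with $G=\bigcup_{n\in\N}M^n$ and a compact set $B$ whose image in $G/B_{\omega_0}(G)$ is invariant under conjugation. Then $(MBM^{-1}B^{-1})\cap B_{\omega_0}(G)$ is compact, hence contained in a single $B_n(G)$ because $B_n(G)$ is open in $B_{\omega_0}(G)$ for $n$ large (this is where Proposition~\ref{pr111} enters); modulo $B_n(G)$ the image of $B$ is then invariant under conjugation by $M$, hence by induction under all of $G$, so $B\subseteq B_{n+1}(G)\subseteq B_{\omega_0}(G)$, i.e.\ $B(G/B_{\omega_0}(G))$ is trivial. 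Some such use of a compact generating set is unavoidable: for non-compactly generated groups the $\FC$-class can exceed $\omega_0$, so the claim is not a formal consequence of the stabilization data in Proposition~\ref{pr111}. Your argument for the second assertion (once $G=B_{\omega_0}(G)$, a compact generating set lies in some $B_m(G)$ because the $B_n(G)$, $n\ge n_0$, form an increasing open cover, whence $G=B_m(G)$) is correct and is essentially the paper's, but it presupposes the first assertion, which you have not proved.
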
\noindent
Thus (combined with \cite{Lo2}\;Th.\,1) the compactly generated
$\FC$-\,hypercentral l.c. groups coincide with
the class of compactly generated l.c.\,groups of polynomial growth.
\begin{proof}
The arguments are similar to those in the proof of \cite{FF}\;Th.\,1. Let
$M,B$ be compact subsets of $G$ such that $G=\bigcup_{n\in\N}M^n$ and
the image $\dot B$ in $G/B_{\omega_0}(G)$ is\linebreak $G$-invariant. Put
$B_1=(MBM^{-1}B^{-1})\cap B_{\omega_0}(G)$\,. $B_1$ is compact and
since for $n$~large $B_n(G)$ is open in $B_{\omega_0}(G)$\,, it follows
that there exists $n$ such that\linebreak $B_1\subseteq B_n(G)$\,.
We denote now by $\ddot B$ the image in $G/B_n(G)$ and it follows that
$\ddot x\ddot B\ddot x^{-1}\subseteq\ddot B$ \,for all $x\in M$\,. By
induction, this implies that $\ddot B$ is $G$-invariant, thus
$B\subseteq B_{n+1}(G)$\,. This shows that
\,$B_{\omega_0+1}(G)=B_{\omega_0}(G)$\,.
\\
Now assume that $G$ is $\FC$-\,hypercentral. Then $G=B_{\omega_0}(G)$ and
with $M$ as above, it follows that there exists $n$ such that
$M\subseteq B_n(G)$\,. This implies that $G=B_n(G)$\,.

\end{proof}

\begin{Rem}   \label{rem113}		  
(a) \ A counterexample with a non-compactly generated group where
$B(G)$ is not closed has been given in
\cite{Ti}\;p.\,104. This was extended further in \cite{WY}\;sec.\,6.
\item[(b)] The Theorems\;\ref{th15} and \ref{th112} complete
\cite{Lo2}\;1.4.5. If $G$ is compactly generated and
$\FC$-\,hypercentral it follows from \cite{Lo2}\;L.\,1 that $B(G)$
(and also $B_i(G)\,,\ i\ge0$) is compactly generated. But there are
examples of finitely generated (discrete) groups such that $B(G)$ is
not finitely generated.
\item[(c)] \;For general l.c.\,groups $G$ one can show (extending the type of
arguments given for Proposition\;\ref{pr18}) that $G^0B_1(G)$ is always open
in $\pi_{G^0}^{-1}(B_1(G/G^0))$\,. It follows also that if $B(G)$ is closed,
then $B(G/G^0)$ must be closed.
\end{Rem}
\medskip
\section{Further growth properties} 
\medskip
We give an intrinsic characterization of the (non-connected) nil-radical $N$
(i.e., the maximal nilpotent normal subgroup, \cite{Lo2}\;Prop.\,3)
and the $\FC_G$\,-\,series $(H_n)$ of Theorem\;\ref{th52} in terms of certain
growth properties.

\begin{Def} \label{def61}	    
For a relatively compact symmetric $e$-neighbourhood $V$ generating~$G$
\,(i.e.,
$G = \bigcup\limits_{n = 1}^{\infty} V^n$) put 
\,$\tau_V (x) = \min\,\{n \in \N \cup \{0\} : x \in V^n\}$ for $x \in G$
\,(clearly, we take $V^0 = \{e\}$). For $G$ discrete, this is just the word
length function (see \cite{Ha}\;VI.A.\,Def.1; for the general non-discrete
case see also \cite{FG1}\;Def.\,1.2). The elementary properties of $\tau_V$
are the same as in \cite{Ha}\;VI.A.3(i). For different $V, V'$ the functions
are related as in \cite{Ha}\;IV.B.\,Ex.21\,(iii), i.e., there exist
$c_1, c_2 > 0$
such that \,$c_1\,\tau_V(x) \leq \tau_{V'}(x) \leq c_2\,\tau_V (x)$
\,for all $x$. If $V$ is fixed we will write simply $\tau(x)$.
\\[1mm]
We put
\,$\gamma (x) = \lim\limits_{k\to \infty}
\dfrac{\log \tau (x^k)}{\log k}$ \ for $x \in G$ and call it
\,{\it local growth} at $x$\,.
\\[.8mm]
Furthermore,
$\lVert x\rVert_n = \sup\,\{\,\tau (y x y ^{-1})\!: y \in G
\text{ with } \tau (y)\leq n\,\} \quad (n = 0,1,2,\dots;\linebreak x \in G)$
will be called \,{\it conjugacy operator growth}
(note Remark\;\ref{rem67}\,(d)\,).\vspace{-2mm}
\end{Def}

It will result from the arguments below that for groups of polynomial growth
the limit defining $\gamma$ always exists and either 
$\gamma (x) = 0 \text { or \,} \gamma (x) = \dfrac 1j
\text { for some } j \in \N$. It follows easily from the properties mentioned
above that $\gamma (x)$ does not depend on the choice of $V$.\vspace{3mm}

In the next two Lemmas, we will show that if $x$ belongs to a co-compact
subgroup $H$ of $G$, then $\gamma (x)$ keeps its value when determined
relatively to $H$ \,(compare \cite{Gu}\;L.\,I.6).
This will be used to treat the "generic" case \,$G = \widetilde N \rtimes K$
in Proposition\;\ref{pro64} and then to deduce the general case in
Theorem\;\ref{th65}. In Corollary\;\ref{cor66} we will give some conclusions
on the growth of subgroups.

\begin{Lem}	 \label{le62}	   
Assume that $H$ is a closed subgroup of the locally compact group $G$ such
that $G = CH$ for some compact subset $C$ of $G$. Let $V$ be a
relatively compact $e$-neighbourhood
in $G$ such that \,$H = \bigcup\limits_{n = 1}^{\infty} (V \cap H)^n$. Then
there exists $s\in \N$ such that $V\,C \subseteq  C\,(V \cap H)^s$.
\end{Lem}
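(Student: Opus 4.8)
The statement is a compactness/covering fact: given $G = CH$ with $C$ compact, and $V$ a relatively compact neighbourhood of $e$ whose intersections with $H$ generate $H$, we must absorb one factor of $V$ into $C$ at the cost of a bounded power of $V \cap H$. The natural approach is to first control how $VC$ sits relative to $C$, then use the generation hypothesis on $H$ together with compactness to bound the required power uniformly. Concretely, I would start from the observation that $VC \subseteq G = CH$, so each point $vc$ (with $v \in V$, $c \in C$) can be written as $c' h$ with $c' \in C$, $h \in H$; the element $h = c'^{-1} v c$ then lies in the relatively compact set $C^{-1} V C$, hence in the relatively compact set $W := (C^{-1} V C) \cap H$ (using that $h \in H$). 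Thus $VC \subseteq C\,W$ with $W$ relatively compact in $H$.

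Next I would exploit the generation hypothesis: since $H = \bigcup_{n=1}^\infty (V \cap H)^n$ and $V \cap H$ is a neighbourhood of $e$ in $H$, the sets $(V \cap H)^n$ form an increasing open cover of $H$. The closure $\overline{W}$ is compact in $H$, so by compactness $\overline{W} \subseteq (V \cap H)^s$ for some $s \in \N$; indeed $(V\cap H)^s$ contains the interior $(V\cap H)^{s-1}\cdot\mathrm{int}(V\cap H)$ which is open, and finitely many such translates already cover $\overline W$ for $s$ large — more simply, the increasing open sets $\mathrm{int}\big((V\cap H)^n\big)$ cover the compact set $\overline W$, so one of them does. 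Therefore $W \subseteq (V \cap H)^s$, and combining with the previous paragraph gives $VC \subseteq C\,W \subseteq C\,(V \cap H)^s$, as required.

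**Where the difficulty lies.** The only genuinely substantive point is that $(V \cap H)^n$ exhausts $H$ with open interiors, so that a compactness argument applies; this is where the hypothesis "$H = \bigcup_{n=1}^\infty (V\cap H)^n$" together with "$V$ is a neighbourhood of $e$" (so $V \cap H$ is a neighbourhood of $e$ in $H$, hence has nonempty interior containing $e$) is essential. One must be slightly careful that $V \cap H$ need not be symmetric or open, but $V$ relatively compact and a neighbourhood of $e$ suffices: $(V\cap H)^{n+1} \supseteq (V\cap H)^n \cdot U$ for a fixed open $e$-neighbourhood $U \subseteq V \cap H$ in $H$, so $(V\cap H)^{n+1}$ has interior containing $(V\cap H)^n$ once $n \geq 1$, and these interiors increase to all of $H$. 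Everything else is routine manipulation of the relation $G = CH$ and relative compactness; no structure theory is needed here.
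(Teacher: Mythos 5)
Your argument is correct and follows essentially the same route as the paper: write each $x\in VC$ as $x=c'h$ with $c'\in C$, $h\in H$ via $G=CH$, note $h\in (C^{-1}VC)\cap H$, and use relative compactness of this set (plus the exhaustion $H=\bigcup_n (V\cap H)^n$) to place it inside some $(V\cap H)^s$. The only difference is that you spell out the open-cover/compactness step that the paper leaves implicit, which is a harmless elaboration.
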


\begin{proof}
\,$(C^{-1}VC) \cap H$ being relatively compact, there exists $s \in \N$
such that\linebreak
$(C^{-1}VC) \cap H \subseteq (V\cap H)^s$. Now take $x \in V\,C$.
By assumption, $x = c\,y$ for some $c \in C,\; y \in H$ and it follows that
$y = c^{-1}x \in (C^{-1}VC) \cap H$, giving\linebreak $x \in C (V\cap H)^s$.
\end{proof}

\begin{Lem}   \label{le63}	
Let $G, H, C, V, s$ be as in Lemma\;\ref{le62} and assume that $V$ is
symmetric,
$C \cap H \subseteq V$ and $G = \bigcup\limits_{n = 1}^{\infty}  V^n$ holds.
\ Then\vspace{-.5mm}
$$\tau_V(x)\,\leq\,\tau_{V\cap H}(x)\,\leq\,(s + 1)\,\tau_V(x)\quad
\text{ for all } x \in H\,.$$
Furthermore,\quad $V^n\subseteq C\,(V\cap H)^{s\,n+1}$ \ for all $n$\,.
\end{Lem}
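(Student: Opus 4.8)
The plan is to prove both inequalities for $\tau$ together with the covering statement $V^n\subseteq C\,(V\cap H)^{sn+1}$, deducing the right-hand $\tau$-estimate from that covering. First I would observe that the left inequality $\tau_V(x)\le\tau_{V\cap H}(x)$ for $x\in H$ is immediate: if $x\in(V\cap H)^m$ then, since $V\cap H\subseteq V$, we have $x\in V^m$, so $\tau_V(x)\le m$; taking $m=\tau_{V\cap H}(x)$ gives the claim. (One should note $\tau_{V\cap H}(x)$ is finite because of the hypothesis $H=\bigcup_n(V\cap H)^n$.)

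For the covering statement I would argue by induction on $n$. The case $n=0$ reads $\{e\}\subseteq C$, which holds since $e\in C\cap H\subseteq V$, actually more simply because $G=CH$ forces $e=c\,h$, but the cleanest route is: $e\in V^1$ and the hypothesis $C\cap H\subseteq V$ is not even needed here — rather $V^0=\{e\}\subseteq C(V\cap H)^1$ follows once we know $C$ is nonempty and $e\in(V\cap H)$, which holds as $V$ is an $e$-neighbourhood. For the inductive step, assume $V^n\subseteq C\,(V\cap H)^{sn+1}$. Then $V^{n+1}=V\cdot V^n\subseteq V\,C\,(V\cap H)^{sn+1}$. By Lemma~\ref{le62}, $V\,C\subseteq C\,(V\cap H)^s$, so $V^{n+1}\subseteq C\,(V\cap H)^s\,(V\cap H)^{sn+1}=C\,(V\cap H)^{s(n+1)+1}$, completing the induction.

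Finally I would derive the right-hand inequality $\tau_{V\cap H}(x)\le(s+1)\,\tau_V(x)$ for $x\in H$. Given $x\in H$, set $n=\tau_V(x)$, so $x\in V^n$; by the covering just proved, $x\in C\,(V\cap H)^{sn+1}$, say $x=c\,w$ with $c\in C$ and $w\in(V\cap H)^{sn+1}$. Then $c=x\,w^{-1}\in H$ (as $x,w\in H$), so $c\in C\cap H\subseteq V$; moreover $c\in H$, hence $c\in V\cap H$, i.e. $c\in(V\cap H)^1$. Therefore $x=c\,w\in(V\cap H)^{sn+2}$, which gives $\tau_{V\cap H}(x)\le sn+2$. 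Since $n\ge1$ whenever $x\ne e$ (and the inequality is trivial when $x=e$), we have $sn+2\le sn+n+1=(s+1)n+1$; a slightly more careful bookkeeping — using $n\ge1$ to absorb the additive constant, $sn+2\le(s+1)n+1\le \ldots$ — yields $sn+2\le(s+1)n$ exactly when $n\ge2$, and one checks the case $n=1$ directly (then $x\in V$, so $x\in V\cap H$, giving $\tau_{V\cap H}(x)=1\le s+1$). I expect the only mild obstacle to be this endgame arithmetic of matching $sn+2$ against $(s+1)n$ and handling the small-$n$ cases cleanly; everything else is a routine induction plus the two hypotheses $C\cap H\subseteq V$ and $G=CH$ fed through Lemma~\ref{le62}.
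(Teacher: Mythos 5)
Your argument is sound in substance and runs on the same engine as the paper's proof, namely repeated use of Lemma~\ref{le62} to move the compact set $C$ past the $V$-factors, but with the logical order reversed: the paper first shows by induction that any $x=x_1\dots x_n c\in H$ with $x_i\in V$, $c\in C$ satisfies $\tau_{V\cap H}(x)\le sn+1$ (hence the inequality with the slightly better constant $s\,\tau_V(x)+1$), and only then deduces the covering $V^n\subseteq C(V\cap H)^{sn+1}$ by writing $x=ch$ with $c\in C$, $h\in H$ and applying the inductive claim to $h^{-1}=x^{-1}c$; you prove the covering first by a direct induction on $n$ and read off the inequality afterwards, at the cost of the constant $sn+2$, which you correctly repair by treating $\tau_V(x)\le 1$ separately. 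Either order works, and your induction step $V^{n+1}\subseteq VC(V\cap H)^{sn+1}\subseteq C(V\cap H)^{s}(V\cap H)^{sn+1}$ and the deduction $x=cw$, $c=xw^{-1}\in C\cap H\subseteq V$ (so $c\in V\cap H$) are fine.

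The one step that is wrong as written is your base case. The assertion that $V^0=\{e\}\subseteq C(V\cap H)$ ``follows once we know $C$ is nonempty and $e\in V\cap H$'' is false: those two facts only give $C\subseteq C(V\cap H)$, and there is no reason to have $e\in C$ (the lemma nowhere assumes this). What is needed is precisely $e\in C(V\cap H)$, and the hypotheses you declare unnecessary are exactly what provide it: since $G=CH$, write $e=ch$ with $c\in C$, $h\in H$; then $c=h^{-1}\in C\cap H\subseteq V$, and by symmetry of $V$ we get $h=c^{-1}\in V\cap H$, so $e=ch\in C(V\cap H)$. Starting the induction at $n=1$ does not avoid the issue, since writing $x=x\cdot e$ still requires $e\in C(V\cap H)$ before Lemma~\ref{le62} can be invoked. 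With this one-line repair your proof is complete and correct.
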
\noindent
Recall from \cite{MS} that a co-compact subgroup of a compactly generated
group is always compactly generated. This quarantees the existence of a 
neighbourhood $V$ with the properties needed in Lemma\;\ref{le62}, \ref{le63}. 

\begin{proof}
$\tau_V(x) \leq \tau_{V\cap H}(x)$ \,is always true. For the other part,
we will show by induction on $n \geq 0$ that if
\,$x = x_1\dots x_n\,c \in H$ where  $x_i\in V \ (i = 1, ..., n),\;c \in C$,
then
\,$\tau_{V \cap H}(x) \leq s\,n + 1$. Of course, this will imply
\,$\tau_{V\cap H}(x) \leq s\,\tau_V (x) + 1$ for all $x \in H$ and
then for $x \not= e,\ \,\tau_{V\cap H} (x) \leq (s + 1)\,\tau_V (x)$
\,follows \,(for $x = e$ everything is trivial).
\\
For $n = 0$, we have \,$x = c \in C \cap H \subseteq V \cap H$, \,hence
\,$\tau_{V\cap H} (x) \leq 1$\,. For $n > 0$, we have by Lemma\;\ref{le62},
$x_n c = c'y$
with $c' \in C\,,\; y \in (C \cap H)^s$. Then sub-multiplicativity and the
inductive assumption give 
\\\hspace*{3cm}
$\tau_{V\cap H} (x) \leq \tau_{V\cap H} (x_1\dots x_{n-1} c')\,+\,s
\leq s\,n + 1$\,.
\\[.5mm]
For the final assertion, take $x \in V^n$. Then $x = c\,h$ with
$c \in C,\; h \in H$ and $h^{-1} = x^{-1} c$\,. Consequently, 
$\tau_{V\cap H} (h) = \tau_{V\cap H} (h^{-1}) \leq s\,n + 1$ which gives
\linebreak$x \in C\,(V\cap H)^{s\,n + 1}$.\vspace{1.5mm}
\end{proof}

\begin{Pro}  \label{pro64}	 
Assume that $G = \widetilde N \rtimes K$ where $K$ is compact,
$\widetilde N$ shall be a connected, simply connected nilpotent Lie group.
Put \,$W_0 =
\bigcup \;\{\,\tilde n K \tilde n^{-1} :\tilde n\in\widetilde N\,\}$\,.
\\
{\rm (a)} If the action of $K$ on $\widetilde N$ is faithful, then
$$\widetilde N = \{x \in G:\,\lVert x\rVert_n /n \to 0
\text{ \,for } n \to \infty\,\}\ .$$
{\rm (b)} The following statements are equivalent
\begin{enumerate}
\item[(i)] \ $x \in W_0$ \qquad {\rm (ii)} \ $\gamma (x) = 0$ \qquad
 {\rm (iii)} \ $\{\tau (x^k)\! : k \in \N\}$ is bounded
\item[(iv)] \ $x$ generates a relatively compact subgroup of $G$.\vspace{.5mm}
\end{enumerate}
{\rm (c)} The following statements are equivalent
\begin{enumerate}
\item[(i)] \ $x \in W_0\,C_{j-1}(\widetilde N)$, but
\,$x \notin W_0\,C_j(\widetilde N)$ \qquad
{\rm (ii)} \ $\gamma (x) = \frac 1j$
\item[(iii)] \  there exist  $c_1, c_2 > 0$ such that
$$\qquad c_1\,k^{\frac 1j} \leq \tau (x^k) \leq c_2\,k^{\frac 1j}\qquad
 \text{ for all \ } k \in \N\,.$$
\end{enumerate}
\end{Pro}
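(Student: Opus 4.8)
The plan is to first reduce all three parts to statements about the projection of elements onto the nilpotent factor $\widetilde N$, where Guivarc'h-type estimates can be applied, and then to treat the compact "twisting'' coming from $W_0$ and $K$ separately. Write $x = \tilde n\, k$ with $\tilde n \in \widetilde N$, $k \in K$; the key preliminary observation is that $x^m = \tilde n_m\, k^m$ where $\tilde n_m = \tilde n\,(k\circ\tilde n)\,(k^2\circ\tilde n)\cdots(k^{m-1}\circ\tilde n)$, so the $K$-part of $x$ is periodic (order dividing $|k|$ if $k$ has finite order, otherwise lying in the compact group $\overline{\langle k\rangle}$), and only the growth of $\tilde n_m$ in $\widetilde N$ matters. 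For a fixed generating neighbourhood $V$ of $G$ one checks, using compactness of $K$, that $\tau_V(x)$ is comparable to $\tau_{V_N}(\tilde n_{\mathrm{proj}})$ up to additive/multiplicative constants, where $V_N$ is an induced generating neighbourhood of $\widetilde N$ and $\tilde n_{\mathrm{proj}}$ is a suitable projection of $x$ — here I would invoke Lemma \ref{le62} and Lemma \ref{le63} with $H = \widetilde N$ (which is co-compact in $G$ since $K$ is compact) to pass freely between word-lengths in $G$ and in $\widetilde N$.

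For part (a): if $x \in \widetilde N$ then conjugates $yxy^{-1}$ with $y = \tilde m\, k'$ lie in $\widetilde N$, and $\tau(yxy^{-1})$ is governed by $k'\circ(\text{inner-}\widetilde N\text{-conjugate of }x)$; since $\widetilde N$ is nilpotent, inner conjugation distorts word-length only polynomially with a degree strictly smaller than top degree, while the $K$-twist $k'\circ(\cdot)$ is bounded (continuous action of a compact group), so $\lVert x\rVert_n$ grows strictly sub-linearly in $n$ and $\lVert x\rVert_n/n \to 0$. Conversely, if $x = \tilde n\, k$ with $k \neq e$, faithfulness of the $K$-action produces some $\tilde m \in \widetilde N$ with $k\circ\tilde m \neq \tilde m$; conjugating $x$ by $\tilde m^t$ (with $\tau(\tilde m^t) \asymp t$ up to the $\widetilde N$-distortion) moves the $\widetilde N$-part by $\tilde m^t(k\circ\tilde m^t)^{-1}$-type terms, and one must show this forces $\tau(\tilde m^t x \tilde m^{-t}) \geq c\,t$ for some $c > 0$ — this uses that $\tilde m\mapsto \tilde m - k\circ\tilde m$ is a nonzero polynomial map on the simply connected nilpotent $\widetilde N$, hence its image is unbounded of at-least-linear word-growth. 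This converse direction is, I expect, the main obstacle: one has to quantify faithfulness into a genuine linear lower bound on $\lVert x\rVert_n$, controlling simultaneously the polynomial distortion inside $\widetilde N$ and the fact that the relevant displacement need not lie in a one-parameter subgroup.

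For parts (b) and (c): the equivalence of (i)(ii)(iii)(iv) in (b) follows once we know $x \in W_0$ iff $x$ lies in a compact subgroup iff the projection $\tilde n_{\mathrm{proj}}$ of $x$ to $\widetilde N$ is trivial (because $\tilde n\, k \in \tilde m K \tilde m^{-1}$ exactly when $\tilde n$ equals the $\widetilde N$-coboundary of $k$ via some $\tilde m$, i.e. the projection vanishes); then $\gamma(x) = 0$, boundedness of $\{\tau(x^k)\}$, and relative compactness of $\langle x\rangle$ are all equivalent to $\tilde n_m$ staying in a compact set. For (c), reduce via the above to $x$ with nontrivial $\widetilde N$-projection $\tilde y$, lying in $C_{j-1}(\widetilde N)$ but not in $C_j(\widetilde N)$ modulo $W_0$; then $\tilde y^{\,k}$-type words have $\tau_{V_N}(\tilde y^k) \asymp k^{1/j}$ by the classical Guivarc'h estimate for word-length of powers in a simply connected nilpotent Lie group (an element genuinely of filtration-depth $j$ has $j$-th-root growth of its powers — cite \cite{Gu}\;Th.\,II.1 and the associated lemmas, transported back to $G$ by Lemmas \ref{le62}, \ref{le63}), and the $W_0$-ambiguity only changes things by bounded factors. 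The technical heart of (c) is verifying that the correct "filtration depth'' of the power $x^k$ in $\widetilde N$ is exactly $j$ when $x \in W_0 C_{j-1}(\widetilde N) \setminus W_0 C_j(\widetilde N)$, i.e. that conjugation by $W_0$-elements and the compact $K$-twisting do not accidentally lower the depth — this again rests on the structure of the lower central series of $\widetilde N$ being $K$-stable and on $W_0$ meeting each $C_i(\widetilde N)$-coset in a controlled way.
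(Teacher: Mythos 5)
Your overall strategy (reduce to $\widetilde N$ via the co-compactness Lemmas \ref{le62}, \ref{le63} and Guivarc'h's quasi-norm estimates, treating $K$ and $W_0$ as bounded perturbations) is the same as the paper's, but the two places you yourself flag as ``the main obstacle'' and ``the technical heart'' are exactly where the sketch has genuine gaps, and the tools you name there do not suffice. In (b) and (c) you work with an unspecified ``projection $\tilde n_{\mathrm{proj}}$'' of $x=\tilde n\,k$ and want $\tau(x^m)$ to be governed by powers of a single element of $\widetilde N$; but $x^m=\tilde n_m k^m$ with $\tilde n_m$ a twisted product, not a power, so Guivarc'h's estimate $\tau(\tilde y^{\,k})\asymp k^{1/j}$ does not apply to it. The paper resolves this by the commuting decomposition $x=y\,w$ with $w\in W_0$, $y\in\widetilde N$ and $yw=wy$ (via \cite{Lo3a}\;L.\,2.19 with $\sigma=\theta=\iota_k$, or induction on the nilpotency class), which gives $x^m=y^m w^m$ and hence $\gamma(x)=\gamma(y)$; this is the missing idea, and your criterion in (b) that ``$x\in W_0$ iff the projection vanishes'' is only well posed once that decomposition is available. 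Moreover, for the implication $x\in W_0\,C_j(\widetilde N)\Rightarrow\gamma(x)\le\frac1{j+1}$ one must estimate $\tau_V(y_1\cdots y_k)$ for a product of $k$ \emph{different} $K$-conjugates of a fixed $y'$; the paper does this with the distortion estimate $\tau_{V\cap C_j(\widetilde N)}(z)\ge c\,\tau_V(z)^{j+1}$ valid for \emph{all} $z\in C_j(\widetilde N)$, an ingredient absent from your outline and not replaceable by the power estimate alone.

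In the converse half of (a), your justification --- the displacement map $\tilde m\mapsto\tilde m\,(k\circ\tilde m)^{-1}$ is a nonzero polynomial map, hence its image is ``unbounded of at-least-linear word-growth'' --- is a non sequitur: an unbounded subset of $C_{j-1}(\widetilde N)$ has word length comparable to $\lVert\cdot\rVert^{1/j}$, so unboundedness gives in general only sublinear growth, and if the displacement of $y^n$ lay deep in the lower central series you would get no linear lower bound this way. The paper's argument uses that $\iota_k$ is a \emph{semisimple} automorphism, so nontriviality on $\widetilde N$ forces nontriviality already on the abelianization $\widetilde N/C_1(\widetilde N)$; projecting to that quotient, the displacement of $y^n$ grows exactly linearly, and combining this with $\tau_V(y^n\tilde n y^{-n})=o(n)$ (from the first half of (a), which itself requires the quantitative bound $\varphi(\ad(y)X)\le c\,\tau(y)^{(r-1)/r}$ rather than the qualitative ``polynomial distortion'' you invoke) yields $\inf_n\lVert x\rVert_n/n>0$. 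So the skeleton is right, but as written the proposal does not close either of the two decisive steps.
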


\begin{proof}
$(\alpha)$ \,First, we prove (b),\,(c) when $K$ is trivial. This is
essentially well known. Let $\tilde{\fr n}$ be the 
Lie algebra of $\widetilde N$ and decompose
\,$\tilde{\fr n} = \bigoplus\limits_{j = 1}^r\,\fr w_j$ so that
\,$\bigoplus\limits_{i = j}^r\,\fr w_i$ gives the Lie algebra of
\,$C_{j-1}(\widetilde N)\quad (j = 1,\dots,r)$.
Consider any norm on $\tilde{\fr n}$ and put
\,$\varphi (X) = \max\limits_{j=1,\dots,r}\,\lVert X_j\rVert^{\frac 1j}$
\,for \;$X = \sum\limits_{j = 1}^r\,X_j$ \,with $X_j \in \fr w_j$\,.
Then it has been shown in \cite{Gu}\;Proof of L.\,II.1 and Th.\,II.1 that
given a relatively compact $e$-neighbourhood $V$ in 
$\widetilde N$ there exist $c_1, c_2 > 0$ such that
$$\{\exp(X) : \varphi (X) \leq c_1\,n\}\,\subseteq\, V^n\,
\subseteq\,\{\exp (X) : \varphi (X) \leq c_2\,n\}$$
for all $n$ (alternatively, one could use also the method of Tits in the
appendix of \cite{Gr}; \;see also \cite{Br}\;Th.\,3.7, $\varphi$ is one
of the main examples of the homogeneous quasi--norms considered where).
In other words \quad
$\dfrac{\varphi (X)}{c_2} \leq \tau_V (\exp (X)) \leq 
\dfrac{\varphi (X)}{c_1}$\quad for all \,$X \in \tilde{\fr n}$\,.
\\[1mm]
Furthermore, either by inspecting the proof of \cite{Gu}\;L.\,II.1 or by
applying the last formula to $C_j(\widetilde N)$ and using the well known
relation
\,$[C_i(\widetilde N),C_j(\widetilde N)]\subseteq C_{i+j+1}(\widetilde N)$
\,(e.g., by \cite{War}\;Cor.\,1.12) which implies
\,$C_i(C_j(\widetilde N))\subseteq C_{ij+i+j}(\widetilde N)$, it follows
that there exists $c>0$ such that
\,$\tau_{V\cap C_j(\widetilde N)}(x)\ge c\: \tau_V (x)^{j+1}$ \;for all
\,$x\in C_j(\widetilde{N}),\linebreak j=1,2,\dots$\,.
\\
From the first formula, it follows immediately that the property
\;``\,$x \in C_{j-1}(\widetilde N)$ but
$x \notin C_j(\widetilde N)$\;'' is equivalent
to (c)\,(iii). Then (c) is an easy consequence
\,(clearly $W_0 = (e)$) and (b) follows from (c).
\vspace{1mm}
\item[$(\beta)$] \,Now consider the general case for $K$. From
Lemma\;\ref{le63} and $(\alpha)$, the equivalence in (c) follows as 
long as $x \in \widetilde N$. Taking an arbitrary $x = \tilde n\,k \in G$
\,(where
\linebreak$\tilde n \in \widetilde N,\;k \in K$), it follows by induction
on the nilpotency-class of $\widetilde N$ or by \cite{Lo3a}\;L.\,2.19
(taking
$\sigma = \theta = \iota_k$) that it can be written as
$x = y\,w$ with\linebreak
$w \in W_0\,,\ y \in \widetilde N$ such that $y\,w = w\,y$\,.
Clearly, $w$ generates a relatively compact subgroup and
(using $x^k = y^k w^k$) this
implies that $\gamma (x) = \gamma (y)$\,. Thus if \linebreak
$\gamma (x) = 0$\,, we get by
($\alpha$)\,: $\:y = e$\,, i.e., $x \in W_0$ and this proves (b)
\;(the implications 
(i)$\Rightarrow$(iv)$\Rightarrow$(iii)$\Rightarrow$(ii) are easy). If
$y \neq e$\,, the implication (ii)$\Rightarrow$(iii) in (c) follows from
($\alpha$) \,(applied to $y$\,; in particular, this proves also for general
$x$ the existence of the limit
defining $\gamma$ and that $\dfrac 1{\gamma(x)} \in \N$ whenever
$x \notin W_0$). For \,$\gamma (x) = \dfrac 1j\:, \linebreak(\alpha)$ implies
$y \in C_{j-1} (\widetilde N)$\,, hence $x \in W_0\,C_{j-1} (\widetilde N)$\,.
\\[-.3mm]
To get the converse, assume that $x = y' w'$ with
$w' \in W_0\,,\; y' \in C_j (\widetilde N)$. After conjugating, we can
restrict to $w' \in K$. We can also
assume that $KVK = V$. This gives $x^k = y_1\dots y_k w_k$ \,with 
$w_k \in K,\;y_i \in C_j (\widetilde N)$ and
\,$\tau_{V \cap C_j(\widetilde N)}(y_i) =
\tau_{V \cap C_j(\widetilde N)}(y')$. Then  \vspace{-.9mm}
\,$\tau_V(y_1 \dots y_k)^{j+1} \,\leq\,
\dfrac 1c \;\tau_{V \cap C_j (\widetilde N)} (y_1 \dots y_k)\, \leq \,
\dfrac 1c\:k \:\tau_{V \cap C_j (\widetilde N)} (y') $\vspace{.2mm}
\;(by the further properties noted in $(\alpha)$\,)
and this would imply \,$\gamma (x) \leq \dfrac 1{j+1}$\,. \vspace{-.5mm}
Now all the implications in (c) follow.
\item[$(\gamma)$] \,Finally, we prove (a). First assume that
$x\in\widetilde N$.
Again, we may assume $KVK = V$. Then $\tau_V(z)$ depends only on the
$\widetilde N$-component of $z$ and we can reduce to the case where $K$
is trivial. We take up the
notations of $(\alpha)$.
\,$[X,Y]\in\bigoplus\limits_{k = i+j}^r \fr w_k$ \vspace{-2mm}\,holds
for $X \in \fr{w}_i\,,\;Y \in \fr{w}_j$ \,and there
exists $c' > 0$ such that \vspace{-1.5mm}
$$\lVert\,[X,Y]\,\rVert^{\tfrac 1{i+j}} \,\leq\, 
c'\,(\lVert X\rVert\,\lVert Y\rVert)^{\tfrac 1{i+j}}\,=\,
c'\,\varphi (X)^{\tfrac i{i+j}} \; \varphi(Y)^{\tfrac j{i+j}}\,.
\vspace{-1mm}$$
If $X\in\tilde{\fr n}$ is fixed, it follows that there exists $c'' > 0$ such
that \,$\varphi ([X,Y]) \leq c'' \varphi (Y)^{\tfrac{r-1}r}$ whenever
$Y\in\tilde{\fr n}$ and $\varphi (Y) \geq 1$ and then there is $c'''>0$
such that $\varphi (\ad(y)X) \leq c''' \tau (y)^{\tfrac{r-1}r}$ for
$y\neq e$\,. Putting the pieces together, this implies that for every
\,$x \in \widetilde N \quad \lVert x\rVert_n = O (n^{\tfrac{r-1}r})$ holds for
$n \to \infty$.
\\[.2mm]
For the converse, assume that $x = \tilde n\,k$ with
$\tilde n \in \widetilde N,\;k \in K,\;k \not= x$\,. By assumption, $\iota_k$
is non-trivial on $\widetilde N$. Since this is a
semisimple automorphism, it follows easily that the induced automorphism of
$\widetilde N/C_1 (\widetilde N)$ must also be non-trivial.
Take $y \in \widetilde N$ such that
$k y k^{-1} y^{-1} \notin C_1(\widetilde N)$
\;(equivalently, $y k y^{-1} k^{-1} \notin C_1 (\widetilde N)$\,). By
considering the projection to the abelian quotient
$\widetilde N/C_1 (\widetilde N)$, it
follows easily that there exists $c_0 > 0$ such that 
\,$\tau_V (y^n ky^{-n} k^{-1}) \geq c_0\,n$ for all $n$\,. \vspace{.1mm}
Since we have
already shown that \,$\tau_V (y^n \tilde n y^{-n}) = o (n)$\,, it follows that 
\vspace{-1.7mm}\,\,$\inf\limits_n\,\dfrac 1n\,\tau_V (y^n x y^{-n}) > 0$\,, in
particular \,\,$\inf\limits_n \, \dfrac 1n\, \lVert x\rVert_n > 0$\,.
\vspace{1mm}
\end{proof}
\begin{Thm}  \label{th65}	   
Let $G$ be a compactly generated group of polynomial growth having no
non-trivial compact normal subgroups.
\\
{\rm (a)} \;$N = \{x \in G\! : \lVert x\rVert_{n}/n \to 0
\text{ for n} \to \infty\,\}$ \,describes the (non-connected)\linebreak
nil-radical of $G$.
\\
{\rm (b)} \;$H_n = \bigl\{x\in N\!: \gamma (x) \leq \dfrac 1{n + 1}\,\bigr\}
\quad (n = 0,1,\dots)$\vspace{.5mm} are closed normal subgroups
\linebreak of $G$.
They coincide with the groups defined in Theorem\;\ref{th52}\,.
\\[.4mm]
{\rm (c)} For every $x \in G$ we have either
\;$\gamma (x) = 0$ \,or \;$\gamma (x) = \dfrac 1j$ \,for some $j\in\N$\,.
\\
$\gamma (x)\; =\ \;0$ \;holds iff \,$x$ generates a relatively compact
subgroup.\\[.2mm]
$\gamma (x) = \dfrac 1j$ holds
iff there exists \,$c_1, c_2 > 0$ such that 
\;$c_1 k^{\tfrac 1j} \leq \tau (x^k)\leq c_2 k^{\tfrac 1j}$ \linebreak
for all $k\in \N$\,.\vspace{-.3mm}
\end{Thm}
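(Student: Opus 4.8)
The strategy is to reduce everything to Proposition \ref{pro64} by passing from $G$ to its algebraic hull. By Theorem \ref{th2} we may embed $G$ as a closed subgroup of $\widetilde G = \widetilde N \rtimes K$ with $\widetilde G/G$ compact, $K$ compact acting faithfully on $\widetilde N$, and $\widetilde N G$ dense in $\widetilde G$; the subgroups $H_n = G \cap C_n(\widetilde N)$ are as in Theorem \ref{th52}. The first step is to pick a convenient symmetric relatively compact generating neighbourhood: since $\widetilde N G$ is dense and $\widetilde G/G$ compact, $G$ is co-compact in $\widetilde G$, so by the remark following Lemma \ref{le63} (co-compact subgroups of compactly generated groups are compactly generated, cf.\ \cite{MS}) there is a symmetric relatively compact $e$-neighbourhood $\widetilde V$ in $\widetilde G$ with $G = \bigcup_n (\widetilde V \cap G)^n$ and $\widetilde G = \bigcup_n \widetilde V^{\,n}$, and we may arrange $K\widetilde V K = \widetilde V$. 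Setting $V = \widetilde V \cap G$ and applying Lemmas \ref{le62}, \ref{le63} (with $H = G$, $C$ a compact set with $\widetilde G = CG$), we get $c_1 \tau_V(x) \le \tau_{\widetilde V}(x) \le c_2 \tau_V(x)$ for $x \in G$. Hence $\gamma(x)$ computed in $G$ equals $\gamma(x)$ computed in $\widetilde G$, and the two-sided power estimates in (c) transfer verbatim between $G$ and $\widetilde G$; likewise the conjugacy operator growth $\|x\|_n$ changes only by bounded linear factors when passing between $G$ and $\widetilde G$ (here one uses that conjugation in $\widetilde G$ by elements of $\widetilde N G$ can, up to a bounded error, be realised by elements of $G$, since $G$ is co-compact and normalised up to compact error — more precisely one reduces conjugators modulo $C$).

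With this reduction in hand, statement (c) follows directly: Proposition \ref{pro64}(b),(c) gives, for every $x \in \widetilde G$, that $\gamma(x) = 0$ or $\gamma(x) = 1/j$ with $1/\gamma(x) \in \N$, that $\gamma(x) = 0$ iff $x$ generates a relatively compact subgroup of $\widetilde G$, and that $\gamma(x) = 1/j$ iff $c_1 k^{1/j} \le \tau(x^k) \le c_2 k^{1/j}$. For $x \in G$ the local growth and the word-length asymptotics are the same whether measured in $G$ or $\widetilde G$; and $x$ generates a relatively compact subgroup of $G$ iff it does in $\widetilde G$ (relative compactness in a closed subgroup agrees with relative compactness in the ambient group). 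This proves (c). For (a), Proposition \ref{pro64}(a) gives $\widetilde N = \{z \in \widetilde G : \|z\|_n/n \to 0\}$ (using faithfulness of the $K$-action); intersecting with $G$ and using $N = G \cap \widetilde N$ (Theorem \ref{th52}(ii), via \cite{Lo3a}\;Prop.\,4.8(c)) together with the comparability of $\|\cdot\|_n$ in $G$ and $\widetilde G$, we obtain $N = \{x \in G : \|x\|_n/n \to 0\}$.

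For (b), fix $x \in N = G \cap \widetilde N$. By Proposition \ref{pro64}(c), writing $W_0 = \bigcup\{\tilde n K \tilde n^{-1}\}$, we have $\gamma(x) = 1/j$ iff $x \in W_0 C_{j-1}(\widetilde N) \setminus W_0 C_j(\widetilde N)$, and $\gamma(x) = 0$ iff $x \in W_0$; but $x \in \widetilde N$ and $\widetilde N \cap W_0 C_m(\widetilde N) = C_m(\widetilde N)$ (since $\widetilde N \rtimes K$ is a semidirect product, intersecting a set of the form $W_0 C_m(\widetilde N) \subseteq$ (product of a subset of $\widetilde N$-conjugates of $K$ with $C_m(\widetilde N)$) with $\widetilde N$ collapses the $K$-part), so for $x\in\widetilde N$ the condition $\gamma(x)\le 1/(n+1)$ is exactly $x \in C_n(\widetilde N)$. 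Therefore $\{x \in N : \gamma(x) \le 1/(n+1)\} = N \cap C_n(\widetilde N) = H_n$, which is closed and normal in $G$ by Theorem \ref{th52}.

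The main obstacle I expect is the comparison of the conjugacy operator growth $\|x\|_n$ between $G$ and its hull $\widetilde G$: unlike $\tau$, which is handled cleanly by Lemmas \ref{le62}--\ref{le63}, the quantity $\|x\|_n$ involves a supremum over conjugators $y$, and one must check that restricting $y$ to range over $G$ rather than all of $\widetilde G$ changes the sup only by a bounded factor in $n$ — this uses that every $\tilde y \in \widetilde G$ with $\tau_{\widetilde V}(\tilde y) \le n$ can be written $\tilde y = c y$ with $c$ in a fixed compact set and $y \in G$ with $\tau_V(y) \le \mathrm{const}\cdot n$ (Lemma \ref{le62}), and that conjugation by the bounded factor $c$ perturbs $\tau$ only additively by a constant (sub-multiplicativity of $\tau$). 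Once this bookkeeping is done, everything else is a transcription of Proposition \ref{pro64}.
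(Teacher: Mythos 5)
Your argument is correct, and for parts (b) and (c) it is the same reduction the paper makes: pass to the algebraic hull, compare $\tau_V$ and $\tau_{\widetilde V}$ via Lemmas \ref{le62}--\ref{le63}, and quote Proposition \ref{pro64}\,(b),(c) together with Theorem \ref{th52} (your explicit verification that $\widetilde N\cap W_0C_m(\widetilde N)=C_m(\widetilde N)$ is exactly what the paper leaves implicit). Where you genuinely diverge is the converse inclusion in (a). The paper only uses the easy one-sided comparison (conjugators from $G$ are admissible in $\widetilde G$, so $x\in N$ gives $\lVert x\rVert_n/n\to0$), and for $x=\tilde n k$ with $k\neq e$ it does \emph{not} transfer the conjugacy-operator growth back from $\widetilde G$; instead it re-runs step $(\gamma)$ of the proof of Proposition \ref{pro64} inside $G$: after passing to $G/H_1$ (so $\widetilde N$ abelian) it uses \cite{Lo3a}\;Prop.\,4.8\,(c) to find $y\in N$ with $kyk^{-1}\neq y$ and takes the conjugators $y_n=y^n\in N$, obtaining $\inf_n\tau(y_nxy_n^{-1})/\tau(y_n)>0$. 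You instead prove a two-sided comparability $\lVert x\rVert^{\sim}_n\le\lVert x\rVert_{sn+1}+\mathrm{const}$ by writing each conjugator $\tilde y\in\widetilde V^{\,n}$ as $\tilde y=cy$ with $c$ in a fixed compact set and $y\in V^{sn+1}$ (this is literally the final assertion of Lemma \ref{le63}) and absorbing the conjugation by $c$ into an additive constant via sub-multiplicativity; then Proposition \ref{pro64}\,(a) applies as a black box. This bookkeeping is valid and arguably cleaner: it shows the defining set in (a) is unchanged whether the supremum ranges over conjugators in $G$ or in $\widetilde G$, and it avoids repeating the step-$(\gamma)$ computation. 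What the paper's route buys in exchange is that it needs no two-sided transfer of $\lVert\cdot\rVert_n$ at all, and it produces explicit witnesses $y_n$ lying in $N$ itself (using the nontriviality of the $K$-action already on $N$, again \cite{Lo3a}\;Prop.\,4.8\,(c)), which is the form of the statement reused later, e.g.\ in Remark \ref{rem67}\,(b).
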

\begin{proof}
Let $\widetilde G = \widetilde N \rtimes K$ be the algebraic hull of $G$
\,(Theorem\;\ref{th2}). For an appropriate $e$-neighbourhood $\widetilde V$ in
$\widetilde G$\,, we put 
$V = \widetilde V \cap G$\,. Then $\tau_V$ and $\tau_{\widetilde V}$ are
comparable
on $G$ as described in Lemma\;\ref{le63}. In particular, they define the same
local growth function $\gamma$ on $G$\,. Now (c) follows from
Proposition\;\ref{pro64}\,(b)\;and\,(c). Combined with Theorem\;\ref{th52},
we get also (b).
\\[.2mm]
To show (a), let $N = \nil(G)$ be the non-connected nil-radical of
$G$\,. By \cite{Lo3a}\;Prop. 4.8\,(c), we have
$N =\widetilde N\cap G$\,.
For $x \in \widetilde G$ put
\,${\lVert x\rVert}_n\sptilde = \sup\,\{\,\tau_{\widetilde V}(y\,x\,y^{-1})\!:
\linebreak y\in\widetilde G\,,\;\tau_{\widetilde V}(y) \leq n\, \}$. Then by 
Proposition\;\ref{pro64}\,(a), $x \in N$ implies
${\lVert x\rVert}_n\sptilde /n \to 0$ \,for $n \to \infty$ and
by Lemma\;\ref{le63}\,,
it follows that $\lVert x\rVert_n /n \to 0$\,. For the converse,
assume that
$x = \tilde n\,k \in G$ with $\tilde n\in\widetilde N,\;k \in K,\;k \neq e$.
Similarly as in
the proof of Proposition\;\ref{pro64} (step $(\gamma)$\,) we will show that
there are $y_n \in N$ such that 
$\tau (y_n) \to \infty$ \,and
\,$\inf\limits_n \dfrac{\tau (y_n x\,y_n^{-1})}{\tau (y_n)} > 0$ which
will complete the proof of (a). 
Replacing $G$ by $G/H_{1}$ (which amounts to replace $\widetilde N$ by
$\widetilde N/[\widetilde N,\widetilde N]$, see Remark\;\ref{rem53}\,(d)\,),
we can assume that
$\widetilde N$ is abelian. By \cite{Lo3a}\;Prop.\,4.8\,(c), $k$ acts
non-trivially on $N$, thus there exists $y \in N$ such that
\,$k\,y\,k^{-1} \neq y$ and
putting \,$y_n = y^n$, the same argument works as in the proof of
Proposition\;\ref{pro64}\,.\vspace{1mm}
\end{proof}
\begin{Cor} \label{cor66}	  
Let $G$ be a compactly generated group of polynomial growth, $H$ a closed
subgroup. If $H$ and $G$ have the same growth then $G/H$ is compact.
\vspace{-1.5mm}
\end{Cor}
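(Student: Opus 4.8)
The plan is to reduce to the structural picture of Theorem~\ref{th2} and then exploit the growth formula of Theorem~\ref{th52} together with the rank computations in Remark~\ref{rem53}. First I would dispose of the compact normal subgroup: if $C$ is the maximal compact normal subgroup of $G$, then $G$ and $G/C$ have the same growth, $HC/C$ is a closed subgroup of $G/C$ with the same growth as $H$ (since $H\cap C$ is compact, hence $HC/C$ and $H$ have equal growth by \cite{Gu}), and $G/H$ is compact iff $(G/C)/(HC/C)$ is compact. So I may assume $G$ has no non-trivial compact normal subgroup and pass to the algebraic hull $\widetilde G=\widetilde N\rtimes K$.

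Next I would replace $H$ by a better-behaved subgroup. Since $H$ is a closed subgroup of $G$, it is again compactly generated (by \cite{MS}, as $H$ is closed in $G$ of polynomial growth — or one can argue via the embedding into $\widetilde G$), so $H$ has polynomial growth and there is a well-defined growth exponent equal to that of $G$ by hypothesis. The key point is to compare the $\FC_G$-series $(H_n)$ restricted to $H$ with the intrinsic series of $H$ itself. Using the intrinsic characterization in Theorem~\ref{th65}: the local growth function $\gamma$ on $H$ agrees with the restriction of $\gamma$ on $G$ (comparability of word-length functions, Lemma~\ref{le63}, applied to $H\subseteq G$ — note $H$ need not be co-compact yet, but $\tau_V$ restricted to $H$ still dominates and the relevant inequalities only go one way, which suffices to show $\gamma_H(x)\le\gamma_G(x)$ for $x\in H$). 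Then $H\cap H_n \supseteq \{x\in H : \gamma_G(x)\le \tfrac1{n+1}\}$, so the nil-radical $\nil(H)$ sits inside $N\cap H$ and each descending-central-type layer of $H$ embeds into $H_{n-1}/H_n$.

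From here the strategy is a rank/dimension count. Let $d$ be the common growth of $G$ and $H$. By Theorem~\ref{th52}(i) and Remark~\ref{rem53}, $d = \rk(G/N) + \sum_{n\ge1} n\,\rk(H_{n-1}/H_n)$, and the analogous formula holds for $H$ with its own series $(\nil(H)\cap H_n)$ or rather its intrinsic $(H_n^{H})$. The inclusions just established give, layer by layer, $\rk$ of the $H$-layer $\le \rk$ of the corresponding $G$-layer, and $\rk(H/\nil(H)) \le \rk(G/N)$ (the image of $H$ in $G/N$ lands in something whose rank is bounded by $\rk(G/N)$ — here I would use that $H$ maps to $G/N$ with abelian-growth quotient, and the rank is monotone for closed subgroups of $\overline{FC}$-groups, compare \cite{Ra}\;Prop.\,2.8 as used in the proof of Theorem~\ref{th52}). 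Since all these non-negative integers sum to $d$ on both sides and each $H$-term is $\le$ the corresponding $G$-term, all the inequalities are equalities. In particular $\rk(H/\nil(H))=\rk(G/N)$ and $\rk(H_{n-1}^H/H_n^H)=\rk(H_{n-1}/H_n)=\dim(C_{n-1}(\widetilde N)/C_n(\widetilde N))$ for each $n$.

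Equality of ranks at every layer should force co-compactness step by step: I would climb the series from the top. At the level of the nil-radical, $\nil(H)$ is a closed subgroup of $N$ with equal rank at every graded piece of the descending central series, hence its Malcev completion equals $\widetilde N$ (using Remark~\ref{rem53}(d), $C_n(\widetilde N)\cong (H_{n-1}/H_n)_\R$, and that a closed subgroup of a nilpotent group with full rank at each layer is co-compact — \cite{Ra}\;Th.\,2.10 type statement, already invoked in the proof of Theorem~\ref{th52}). So $\nil(H)$ is co-compact in $N$. Then $H/\nil(H)$ is a closed subgroup of $G/N$-image of full rank, hence co-compact in its closure inside $G/N$; combined with the previous step this yields that $H$ is co-compact in $G$, i.e. $G/H$ is compact. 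The main obstacle I expect is the very first comparison step: making precise that $\gamma_H \le \gamma_G$ on $H$ and that each intrinsic layer of $H$ maps injectively (with closed image of the right rank) into the corresponding $G$-layer, \emph{without} yet knowing co-compactness — this is exactly the asymmetric direction of Lemma~\ref{le63}, and one must check that the one-sided word-length inequality $\tau_{V\cap H}\ge\tau_V$ is enough to get the $\le$ on $\gamma$ and hence the containment $\nil(H)\subseteq N$, after which the rank bookkeeping closes the argument.
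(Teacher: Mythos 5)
Your overall route---reduce to the algebraic hull, compare a canonical series of $H$ with the series $(H_n)$ of $G$, obtain termwise rank inequalities, and let equality of the growths force termwise equalities and hence co-compactness---is the same as the paper's. But the middle step, which you yourself flag as the main obstacle, has a genuine gap, in two respects. First, the inequality you state is backwards: from $\tau_V\le\tau_{V\cap H}$ on $H$ one gets $\gamma_G(x)\le\gamma_H(x)$ (this is what the paper uses), not $\gamma_H(x)\le\gamma_G(x)$; for $G$ the discrete Heisenberg group and $H$ its centre, the central generator has $\gamma_G=\tfrac12<1=\gamma_H$, so the two local growth functions also do not agree before co-compactness is known. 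Correspondingly, the containment $H\cap H_n\supseteq\{x\in H:\gamma_G(x)\le\tfrac1{n+1}\}$ is false as written, since elements whose cyclic closure is relatively compact have $\gamma_G=0$ without lying in $N$. Second, and more seriously, no comparison of the one-variable functions $\gamma$ can deliver $\nil(H)\subseteq N\cap H$: membership in the nil-radical is detected by the conjugacy operator growth $\lVert x\rVert_n$ under conjugation by \emph{all} of $G$ (Theorem\;\ref{th65}\,(a)), which intrinsic data of $H$ cannot control. A priori $\nil(H)$ may contain elements sitting in conjugates of $K$ rather than in $\widetilde N$---for instance a non-trivial compact normal subgroup of $H$---and then your layer-embedding, and with it the whole rank bookkeeping, cannot start; note also that even invoking Theorem\;\ref{th52}/\ref{th65} for $H$ itself presupposes that $H$ has no non-trivial compact normal subgroup.

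This is precisely where the paper's proof spends its effort: it first shows the maximal compact normal subgroup $C$ of $H$ is trivial (conjugate $C$ into $K$, observe $H\subseteq C_{\widetilde N}(C)\rtimes K$, use the special case of a closed subgroup of $\widetilde N$ together with \cite{Ra}\;Th.\,2.10 to force $C_{\widetilde N}(C)=\widetilde N$, then faithfulness of the $K$-action); it then replaces $H$ by its radical and a finite-index subgroup so that $[H,H]\subseteq\widetilde N$, and uses Theorem\;\ref{th52}\,(ii) (co-compactness of $H_n\cap{[H,H]}^-$ in $H_n$) together with $\gamma_G\le\gamma_H$ and Proposition\;\ref{pro64} to get $\rk(H_n)=\rk(H_n\cap\widetilde N)\le\rk\bigl(C_n(\widetilde N)\bigr)$---it never needs, nor claims, $\nil(H)\subseteq N$. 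Your proposal needs a substitute for these reductions before the sum comparison can be run. Two minor points: \cite{MS} concerns co-compact subgroups, so it cannot be used to show that $H$ is compactly generated (this is implicit in assuming $H$ has a growth); and your closing step (full rank at every layer forces co-compactness) is fine in outline and corresponds to the paper's concluding remark.
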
\noindent
The converse is well known (\cite{Gu}\;Th.\,I.4) and we used it for
Theorem\;\ref{th52}.
\begin{proof}
Factoring the maximal compact normal subgroup, we can assume that $G$ has
no non-trivial compact normal subgroup. Then, passing to the algebraic hull
\,(Theorem\;\ref{th2}), we can assume that \,$G = \widetilde N\rtimes K$ with
$K$ compact, acting faithfully on $\widetilde N$ and $\widetilde N$ connected,
simply connected, nilpotent.
\\
The special case where $H$ is a closed subgroup of $\widetilde N$ can easily
be settled: \;we have $C_n (H) \subseteq C_n (\widetilde N)$ \,for
$n = 0,1,2,\dots$
and then equality of the growth implies
\,(see Remark\;\ref{rem53}\,(a),\,(d)\,)
\;$\rk (H) = \rk (\widetilde N)$. Then $\widetilde N/H$ must be compact
\,(\cite{Ra}\;Th.\,2.10).
\\[2mm]
Proceeding with the general case, let $C$ be the maximal compact normal
subgroup of $H$ \,(\cite{Lo2}\;Prop.\,1). By \cite{Ho}\;Th.\,XV.3.1, we can
assume that
$C \subseteq K$\,. Then normality of $C$ in $H$ implies that
$H \subseteq C_{\widetilde N}(C) \rtimes K$\,. From the special case just
treated, we conclude that $\rk ( C_{\widetilde N}(C))=\rk (\widetilde N)$
hence $C_{\widetilde N} (C) = \widetilde N$ \,(recall that the centralizer
$ C_{\widetilde N}(C)$ is connected, \cite{Lo3a}\;p.\,10), i.e., $C$ acts
trivially on $\widetilde N$ and
by faithfulness, it follows that $C$ must be trivial. Replacing $H$ by its
radical \,(which is co-compact by \cite{Lo2}\;Prop.\,3\;and\;4),
we can assume that $H$ is solvable and
then, after
passing to a subgroup of finite index, we can assume that the projection of
$H$ to
$K$ is contained in an abelian subgroup, i.e., $[H,H]\subseteq\widetilde N$.
\\
Now we consider the special $\FC_H$\,-\,series $(H_n)$ associated to $H$
by Theorem\;\ref{th52}. We write $N_H=\nil(H)$ for the
nil-radical of 
$H,\ \gamma_H$ for the local growth function of $H$, $\gamma_G$~for that of
$G$\,. By Theorem\;\ref{th65}\,, we have 
$H_n = \{x \in N_H : \gamma_H (x) \leq \frac 1{n+1}\}$.\vspace{.5mm} $H < G$
implies \,$\gamma_G \leq \gamma_H$ on $H$. Hence by
Proposition\;\ref{pro64}\,,
$H_n \cap \widetilde N \subseteq C_n (\widetilde N)$. By
Theorem\;\ref{th52}\,, $H_n\cap {[H,H]}^-$ is co-compact in $H_n$\,.
By our construction, 
$[H,H] \subseteq \widetilde N$, thus $H_n\cap\widetilde N$ is co-compact
in $H_n$ and it follows that 
$\rk(H_n) = \rk(H_n \cap \widetilde N) \leq\rk\bigl(C_n(\widetilde N)\bigr)$
for $n = 0,1,\dots$\;. In addition, $\rk (H)\leq \rk (G)$ \ (based on the
definition in
Remark\;\ref{rem53}\,(a), one can prove for arbitrary generalized
$\overline{FC}$-groups that $\rk(H)\leq\rk (G)$ holds whenever $H$ is a
closed subgroup
of $G$\,; but for the present application, the case of solvable groups is
sufficient and this can be done in the standard way). Then by
Theorem\;\ref{th52}
\,(see also the growth-formula in Remark\;\ref{rem53}\,(a)\,), equality of the
growth implies \,$\rk (H_n) = \rk\bigl(C_n(\widetilde N)\bigr)$ for all
$n\geq 1$ and \,$\rk (H) = \rk (G)$.
It follows that $G/H$ is compact \,(again, $\rk(H) = \rk(G)$ implies $G/H$
compact for arbitrary generalized $\overline{FC}$-groups, but the solvable
case is easier).\vspace{0mm plus 1mm}
\end{proof}
\begin{Rems} \label{rem67}	  
(a) \ If $G$ is an arbitrary compactly generated group of polynomial
growth, let $C$ be its maximal compact normal subgroup
\,(\cite{Lo2}\;Prop.\,1).
If $N, H_n$ are defined as in Theorem\;\ref{th65}\,(a),\,(b), then
$N/C$ is the (non-connected) nil-radical of $G/C$ and $(H_n)$ gives the
general version of the normal series mentioned after Theorem\;\ref{th52}\,.
For $G$ totally disconnected, results related to Theorem\;\ref{th65}\,(a)
are shown in \cite{Tr1}
\,(using the language of graph theory; $o$-automorphisms).
\vspace{1.7mm plus 1mm}
\item[(b)] $\widehat H_n =
\bigl\{x\in G\! : \gamma (x)\leq\dfrac 1{n+1}\,\bigr\}\quad (n = 0,1,\dots)$
defines layers in the group $G$\,. Proposition\;\ref{pro64}\,(c)
gives a description in terms of the algebraic hull $\widetilde G$\,.
It follows that $\widehat H_n$ is closed, left and right $H_n$-invariant 
\,($H_n = \widehat H_n \cap N$), it is invariant under all automorphisms of
$G$, $\widehat H_0 = G$, but for $n > 0$ it need not be a group. In fact,
any normal subgroup of $G$ that is contained in $\widehat H_1$ must already
be contained in $H_1$ \,(one can argue as in the proof of
Theorem\;\ref{th65}\,: if
$x \in G\,,\;x \notin N$, there exists $y \in N$ such that
$[x,y] \notin H_1$\,, thus $[x,y] \notin \widehat H_1$).\vspace{0mm plus .5mm}

Fix $n\geq 1$. By Theorem\;\ref{th65}\,(c) and
Proposition\;\ref{pro64}\,(b),\,(c), $x \in \widehat H_n$ holds iff the coset
of $x$ generates a relatively
compact subgroup of $G/H_n$\,. For $G = \widetilde N \rtimes K$ as in
Proposition\;\ref{pro64}\,, it is equivalent that the coset of $x$ in
$G/C_n(\widetilde N)$ belongs to a conjugate of $K$.
\vspace{0mm plus 1mm}
\item[(c)] For $x \in N$, it follows from the proof of
Proposition\;\ref{pro64}\,(a) that $\lVert x\rVert_n$ grows at most like
$n^{\tfrac{r-1}r}$, where $r$
denotes the nilpotency-class of $\widetilde N$ \;(equivalently: $r-1$ is
the largest index for which $H_{r-1}$ is non-trivial).
\vspace{.8mm plus 1mm}
\item[(d)] The conjugacy operator growth functions $\lVert x\rVert_n$ must
not be confused with the notion of conjugacy growth, see e.g.
\cite{Man}\;Ch.\,17, defined for discrete groups and showing a different
asymptotics.
\end{Rems}
\section{Non-compactly generated groups} 
\medskip
\begin{Def} \label{def71}	 
Let $G$ be a l.c.\;group with Haar measure $\lambda$\,. We call $G$ of
{\it bounded polynomial growth} if there exists $d \in \N$ such that for
every compact \linebreak
$e$-neighbourhood~$V$ we have \,$\lambda (V^n) = O (n^{d})$ \,for $n\in \N$\,.
\\[.5mm]
This means that the exponent $d$ can be chosen independently of $V$ \,(of
course, $\sup\limits_{n\in\N} \dfrac{\lambda (V^n)}{n^d}$ \,will depend on
$V$).\vspace{.3mm} We will show \,(Theorem\;\ref{th73},\,\ref{th74})
that for this class
of not necessarily compactly generated groups $G$ similar structure theorems
hold as in the compactly generated case.
\end{Def}

\begin{Pro}  \label{pro72}	   
Let $G$ be a compactly generated group of polynomial growth, $H$ shall be a
closed subgroup having the same growth as $G$\,. We write $C$ for the maximal
compact normal subgroup of $G$\,. Then the following holds:
\\
{\rm (i)} \ $C\cap H$ is the maximal compact normal subgroup of $H$.
\\
{\rm (ii)} \ If $C$ is trivial, then \,$\nil (H) = \nil (G) \cap H$.
\\
{\rm (iii)} \ If $C$ is trivial, let \,$\widetilde G = \widetilde N\rtimes K$
be the algebraic hull of $G,\ K_H$ shall be the closure of the image of
$H$ in $K\ (\cong \widetilde G/\widetilde N)$. Then
$\widetilde N \rtimes K_H$ is
the algebraic hull of~$H$. In particular, $H$ and $G$ have the same connected 
nil-shadow $\widetilde N$.
\end{Pro}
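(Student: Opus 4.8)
The plan is to prove the three parts in order, using Corollary \ref{cor66} (equal growth for a closed subgroup forces co-compactness) as the main engine together with the structure theory of the algebraic hull from Theorem \ref{th2}.

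\smallskip

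For (i): since $H$ and $G$ have the same growth, Corollary \ref{cor66} gives that $G/H$ is compact. First I would show $C\cap H$ is compact and normal in $H$ — that is immediate. For maximality, let $C_H$ be any compact normal subgroup of $H$. The difficulty is that $C_H$ need not be normal in $G$. But $G/H$ compact means $H$ is co-compact, so one can use a standard averaging/conjugation argument: the subgroup of $G$ generated by all $G$-conjugates of $C_H$ is still relatively compact (using that $G/H$ is compact and $C_H$ is $H$-invariant, as in the observations in the proof of Proposition \ref{pr18} via \cite{Wa}\;Prop.\,1.3), hence its closure is a compact normal subgroup of $G$, therefore contained in $C$. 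Thus $C_H\subseteq C\cap H$, proving maximality. I expect this conjugation-averaging step to be the one needing care.

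\smallskip

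For (ii): assume $C$ trivial. Then by (i) the maximal compact normal subgroup of $H$ is trivial too, so Theorem \ref{th2} applies to $H$ directly. Clearly $\nil(G)\cap H$ is a nilpotent normal subgroup of $H$, so $\nil(G)\cap H\subseteq\nil(H)$. For the reverse inclusion I would use the intrinsic growth characterization: by Theorem \ref{th65}\,(a), $\nil(H)=\{x\in H:\lVert x\rVert^H_n/n\to 0\}$ and $\nil(G)=\{x\in G:\lVert x\rVert^G_n/n\to 0\}$, where the conjugacy operator growth is computed inside $H$ resp.\ $G$. Since $H$ is co-compact in $G$ (by Corollary \ref{cor66}) and compactly generated, choosing compatible generating neighbourhoods as in Lemma \ref{le63} makes $\lVert\cdot\rVert^H_n$ and $\lVert\cdot\rVert^G_n$ comparable on $H$ (the supremum over $\tau(y)\le n$ with $y\in H$ versus $y\in G$ differs only by the bounded ``$C$-factor'' from $G=CH$). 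Hence $x\in\nil(H)$ forces $\lVert x\rVert^G_n/n\to 0$, i.e.\ $x\in\nil(G)\cap H$. So the two nil-radicals agree.

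\smallskip

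For (iii): with $C$ trivial, let $\widetilde G=\widetilde N\rtimes K$ be the algebraic hull of $G$ and set $K_H=\overline{\pi_K(H)}$ where $\pi_K:\widetilde G\to K$; I would view $H$ as sitting inside $\widetilde G$ via the embedding of $G$. The candidate $\widetilde N\rtimes K_H$ is a closed subgroup of $\widetilde G$; I must verify it satisfies the four defining conditions of Theorem \ref{th2} for $H$, so that by the uniqueness (\cite{Lo3a}\;Th.\,3) it is \emph{the} algebraic hull of $H$. It is a semidirect product of the connected simply connected nilpotent Lie group $\widetilde N$ with the compact group $K_H$; I need (a) $K_H$ acts faithfully on $\widetilde N$ — this follows since $K$ does and restriction of a faithful action to a closed subgroup is faithful; (b) $H$ embeds as a closed subgroup — clear; (c) $(\widetilde N\rtimes K_H)/H$ is compact; and (d) $\widetilde N H$ is dense in $\widetilde N\rtimes K_H$. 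For (d), density follows because $\widetilde N G$ is dense in $\widetilde G$ and $\pi_K(H)$ is dense in $K_H$ by construction, so the image of $\widetilde N H$ covers a dense subset of each coset. The main obstacle is (c): I would argue that $\widetilde N\cap H=\nil(H)$ by part (ii) (since $\nil(G)=\widetilde N\cap G$ by \cite{Lo3a}\;Prop.\,4.8\,(c)), that $\widetilde N/\nil(H)$ is compact (this is the analogue of \cite{Lo3a}\;Cor.\,4.10\,(c) for $H$, or follows since $H$ has no compact normal subgroup and the image of $H$ in $K$ has closure $K_H$), and that $K_H/\pi_K(H)^-=K_H/\pi_K(H)^-$ is trivial; combining, $(\widetilde N\rtimes K_H)/H$ is an extension of compact pieces, hence compact. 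Then uniqueness of the algebraic hull finishes (iii), and in particular the connected nil-shadow of $H$ is $\widetilde N$, the same as that of $G$.
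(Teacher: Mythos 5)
Your parts (i) and (ii) are correct, though they take a different route from the paper: for (i) you re-prove maximality by conjugating $C_H$ with a compact set $C_0$ satisfying $G=C_0H$ (which exists since $G/H$ is compact by Corollary\;\ref{cor66}) and then closing up via the observation in the proof of Proposition\;\ref{pr18}; for (ii) you use the intrinsic characterization of Theorem\;\ref{th65}\,(a) together with the comparability of $\tau_G$ and $\tau_H$ on the co-compact subgroup $H$ (Lemma\;\ref{le63}), which does give $\lVert x\rVert^G_n\le \mathrm{const}+\lVert x\rVert^H_{c'n}$ and hence $\nil(H)\subseteq\nil(G)\cap H$. The paper gets both (i) and (ii) in one stroke by noting that $HC/C$ is a closed co-compact subgroup of $\widetilde N\rtimes K_H$ with $\widetilde N\cdot(HC/C)$ dense, so that \cite{Lo3a}\;Prop.\,4.8\,(c) yields simultaneously the absence of compact normal subgroups and $\nil(H)=H\cap\widetilde N$; your arguments are longer but self-contained, and they are fine.

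In (iii), however, there is a genuine gap in your verification of co-compactness of $H$ in $\widetilde N\rtimes K_H$: you assert that $\widetilde N/\nil(H)$ is compact, calling it ``the analogue of \cite{Lo3a}\;Cor.\,4.10\,(c) for $H$''. This is false in general -- the paper itself recalls (in the proof of Theorem\;\ref{th52}) that $\widetilde N/N$ need \emph{not} be compact, and \cite{Lo3a}\;Cor.\,4.10\,(c) is cited there precisely for that non-compactness; taking $H=G$ already refutes your claim. So the ``extension of compact pieces'' you build (which is in any case shaky, since $\widetilde N H$ need not be closed when $\pi_K(H)$ is only dense in $K_H$) does not establish $(\widetilde N\rtimes K_H)/H$ compact. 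The repair is immediate and is exactly what the paper does: by Corollary\;\ref{cor66} $H$ is co-compact in $G$, by Theorem\;\ref{th2} $G$ is co-compact in $\widetilde G$, hence $H$ is co-compact in $\widetilde G$, and since $\widetilde N\rtimes K_H$ is a closed subgroup of $\widetilde G$ containing $H$, the quotient $(\widetilde N\rtimes K_H)/H$ is a closed subset of the compact space $\widetilde G/H$, hence compact. With that substitution your remaining checks (faithfulness of $K_H$, closedness of $H$, density of $\widetilde N H=\widetilde N\,\pi_K(H)$) plus the uniqueness theorem \cite{Lo3a}\;Th.\,3 complete (iii) as in the paper.
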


\begin{proof}
By Corollary\;\ref{cor66} \,$H$ has the same growth as $G$ iff $G/H$ is
compact. Let \linebreak
$\widetilde N\rtimes K$ be the algebraic hull of $G/C$ (Theorem\;\ref{th2})
and define $K_H$ as in (iii). Then $H C/C$ (the image of $H$ in $G/C$) is
contained in $\widetilde N \rtimes K_H$ and co-compact. By
\cite{Lo3a}\;Prop.\,4.8\,(c), $H C/C$ has no non-trivial compact
normal subgroup.
By \cite{HR}\;Th.\,5.33, $H C/C \cong H/(C \cap H)$ and (i) follows.
\\
If $C$ is trivial, then by \cite{Lo3a}\;Prop.\,4.8\,(c) \,
$\nil (H) = H \cap \widetilde N = H \cap G \cap \widetilde N =
H \cap \nil (G)$ and (ii) follows.
\\[-.3mm]
Finally, by \cite{Lo3a}\;Th.\,3, $\widetilde N\rtimes K_H$ is (isomorphic to)
the algebraic hull of $H$, giving (iii).\vspace{-1.2mm}
\end{proof}

\begin{Thm}  \label{th73}
Let $G$ be a l.c.\;group of bounded polynomial growth. Then there exists a
closed normal subgroup $C$ with the following proporties:\vspace{-.8mm}
\begin{enumerate}
\item[(i)] \ $C$ is the directed union of compact open groups \
(equivalently: every finitely generated subgroup of $C$ is relatively
compact).
\item[(ii)] \ $G/C$ is a Lie group.\vspace{-.8mm}
\end{enumerate}
If $H$ is a closed compactly generated subgroup of $G$, then $C \cap H$ is
compact. If $C$ is chosen maximal and $H$ has maximal growth (among the
compactly generated subgroups of $G$), then $C \cap H$ is the maximal
compact normal subgroup of $H$.\vspace{-.7mm}
\end{Thm}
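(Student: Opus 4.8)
The plan is to reduce the statement about the not-necessarily-compactly-generated group $G$ to the compactly generated case via a direct-limit argument over the compactly generated open subgroups, using the uniform bound $d$ on their growth exponents. First I would observe that $G$ is a directed union of its compactly generated open subgroups $G_\alpha$; by hypothesis each $G_\alpha$ has polynomial growth with exponent at most $d$. For each $G_\alpha$ let $C_\alpha$ denote its maximal compact normal subgroup (\cite{Lo2}\;Prop.\,1) and apply Theorem\;\ref{th2} to $G_\alpha/C_\alpha$ to obtain an algebraic hull $\widetilde N_\alpha\rtimes K_\alpha$. The key point is that the nilpotency class and the dimension of $\widetilde N_\alpha$ are controlled by $d$ (via the growth formula in Remark\;\ref{rem53}\,(a), $d_\alpha=\sum_{n\ge0}\dim C_n(\widetilde N_\alpha)$), so these invariants stabilize: along a cofinal subfamily the structure of $G_\alpha/C_\alpha$ as a Lie group is of bounded complexity. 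This should let me build $C$ as a suitable limit of the compact pieces.

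Concretely, I would define $C$ to be the set of $x\in G$ such that $x$ lies in a compact subgroup of $G$, or more precisely the union of all compact open subgroups of the totally disconnected parts; the candidate for $G/C$ being Lie should come from the fact that once the growth of $G_\alpha$ stabilizes at the maximal value $d'\le d$, passing to a larger $G_\beta\supseteq G_\alpha$ with the same growth forces (by Corollary\;\ref{cor66} and Proposition\;\ref{pro72}) that $G_\alpha$ is co-compact in $G_\beta$ and they share the same connected nil-shadow $\widetilde N$, with $K_\beta\supseteq K_\alpha$. Since the $K_\beta$ are compact subgroups of $\Aut(\widetilde N)$ of bounded size, they also stabilize, so the only growth happening in the limit is in the totally disconnected direction that gets absorbed into compact open subgroups. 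Then $C=\bigcup_\alpha(\text{something like }\ker(G_\alpha\to\widetilde G_\alpha)$-type pieces, more carefully the union of the compact open normal subgroups of the $G_\alpha$ that eventually appear), and $G/C$ carries a Lie group structure inherited from the common $\widetilde N\rtimes K$. Property (i) — that $C$ is a directed union of compact open subgroups, equivalently that every finitely generated subgroup of $C$ is relatively compact — follows because any such finitely generated subgroup sits in some $G_\alpha$ and is contained in a compact normal subgroup there.

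For the last two sentences of the statement: if $H$ is a closed compactly generated subgroup of $G$, then $H\subseteq G_\alpha$ for some $\alpha$, and $C\cap H\subseteq C_\alpha\cap H$ (after arranging that the relevant compact open pieces of $C$ restrict correctly), which is compact since $C_\alpha$ is compact; one has to be slightly careful that $C\cap H$ is closed and that the compact open subgroups of $C$ meet $H$ in compact sets, but since $H$ is compactly generated and $C$ is locally compact totally-disconnected-like, $C\cap H$ is compactly generated and contained in a compact group, hence compact. For the final claim, when $C$ is maximal and $H$ has maximal growth $d'$ among the compactly generated subgroups, I would take $H\subseteq G_\alpha$ with $G_\alpha$ also of growth $d'$ (enlarging if necessary); then $H$ and $G_\alpha$ have the same growth, so by Proposition\;\ref{pro72}\,(i), $C_\alpha\cap H$ is the maximal compact normal subgroup of $H$, and maximality of $C$ should give $C\cap H=C_\alpha\cap H$.

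The hard part will be twofold: first, making the stabilization argument precise — showing that the invariants $\dim\widetilde N_\alpha$, the nilpotency class, and $|K_\alpha|$ (or rather $K_\alpha$ up to conjugacy) really do become constant along a cofinal subfamily, and that the structure maps $G_\alpha\hookrightarrow G_\beta$ induce compatible identifications of the algebraic hulls, so that $\widetilde N$ and $K$ are genuinely well-defined for $G$ itself; and second, pinning down the correct definition of $C$ so that it is simultaneously a closed normal subgroup of $G$, a directed union of compact open subgroups, and has $G/C$ a Lie group — the tension being that $C$ must absorb all the totally disconnected "infinite" part while leaving behind exactly the Lie quotient $\widetilde N\rtimes K$. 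I expect the cleanest route is to first prove the weaker statement that $G$ has a compact open (in an appropriate sense) normal subgroup modulo which it is Lie by a Zorn's lemma / maximality argument on compact normal subgroups with Lie quotient on each $G_\alpha$, then show these are compatible, and only at the end verify the two properties (i), (ii) and the statements about subgroups $H$.
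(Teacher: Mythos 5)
You have assembled the right raw materials (a cofinal family of compactly generated subgroups, finiteness of the maximal growth $d$, Corollary\;\ref{cor66} and Proposition\;\ref{pro72}\,(i)), but the step you yourself flag as the ``hard part'' --- pinning down $C$ --- is exactly the content of the theorem, and neither of your candidate definitions works as stated. The set of all $x\in G$ lying in a compact subgroup is in general not a subgroup at all (already for the infinite dihedral group it is the set of reflections together with $e$), and ``the union of the compact open normal subgroups of the $G_\alpha$ that eventually appear'' is not a definition until you say which $G_\alpha$ and which compact subgroups. Moreover, the detour through stabilization of the algebraic hulls $\widetilde N_\alpha\rtimes K_\alpha$ is both unnecessary for this theorem and not something you could easily make precise (compatibility of the hulls along the directed family is genuinely dealt with only in the proof of Theorem\;\ref{th74}, by quite different means); for Theorem\;\ref{th73} no hull is needed at all.

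The missing idea is the following simple normalization, which makes everything else routine. Fix \emph{one} closed compactly generated subgroup $H$ of maximal growth $d$ (finite by bounded polynomial growth and \cite{Gu}\;Th.\,I.2), and set $C=\bigcup\{C_{H'}:H'\text{ c.c.g.},\,H'\supseteq H\}$, where $C_{H'}$ is the maximal compact normal subgroup of $H'$ (\cite{Lo2}\;Prop.\,1). Since every such $H'$ again has growth $d$, Proposition\;\ref{pro72}\,(i) gives $C_{H'}=C_{H''}\cap H'$ whenever $H\subseteq H'\subseteq H''$, so the union is directed and $C\cap H'=C_{H'}$ for all these $H'$; normality of $C$ in $G$ follows because any $x\in G$ and any $H'$ lie in a common c.c.g.\ subgroup $H''\supseteq H$ with $xH'x^{-1}\subseteq H''$; taking $H'$ \emph{open} with $H'\supseteq H$, the identity $C\cap H'=C_{H'}$ (compact) shows $C$ is closed and exhibits $C$ as a directed union of compact open subgroups, and $G/C$ is a Lie group because $H'C/C\cong H'/C_{H'}$ is open in $G/C$ and a Lie group by \cite{Lo1}\;Th.\,2. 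The statements about a c.c.g.\ subgroup $H_1$ then follow by enlarging to some c.c.g.\ $H''\supseteq H_1\cup H$ and using $C\cap H''=C_{H''}$ (and, for the last claim, Proposition\;\ref{pro72}\,(i) once more). Note also that the parenthetical equivalence in (i) needs its own small argument, which you do not address: one must show that an almost connected group in which every finitely generated subgroup is relatively compact is compact (reduce to a Lie group and use \cite{Va}\;Th.\,2.10.1). So the blueprint is salvageable, but as written the construction of $C$, its normality and closedness, and the equivalence in (i) are all still open, and the hull-stabilization program you propose in their place is not the way to close them.
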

\begin{proof}
For a closed compactly generated (c.c.g.) subgroup $H$ of $G$ let $C_H$ be
its maximal compact normal subgroup \,(\cite{Lo2}\;Prop.\,1). Let $d$ be the 
supremum of the growths of the c.c.g.\;subgroups of $G$. Bounded polynomial
growth and \cite{Gu}\;Th.\,I.2 imply that $d$ is finite. Fix $H$ of growth
$d$\,. Then
by Proposition\;\ref{pro72}\,(i) \,$C_H = C_{H'} \cap H$ for every
c.c.g.\;subgroup $H' \supseteq H$. Put $C = \bigcup\{C_{H'}\!: H'
\text{ c.c.g.\;subgroup, }
H' \supseteq H\,\}$. To $x\in G$ and $H'$ we can find (taking an appropriate
generator) a c.c.g.\;subgroup $H'' \supseteq H'\cup\{x\}$, hence
$H'' \supseteq xH'x^{-1}$. Then it is easy to see that $C$ is a normal
subgroup of
$G$. Choose now $H'$ open with $H' \supseteq H$\,. Then $C \cap H' = C_{H'}$
compact implies that $C$ is closed and since $H'/C_{H'}$ is a Lie group
\,(\cite{Lo1}\;Th.\,2), it follows that $G/C$ is a Lie group.
\\
Finally, to show the equivalent condition in (i), it suffices to prove
that if $B$ is an almost connected l.c.\;group such that every
finitely generated subgroup is relatively compact, then $B$ is compact.
Factoring some compact normal subgroup, we may assume that $B$ is a
Lie group. It follows from \cite{Va}\;Th.\,2.10.1
that $B^0$ is topologically finitely  generated and we conclude that $B$
is compact.\vspace{1.5mm plus 1mm}
\end{proof}

\begin{Thm}  \label{th74}     
Let $G$ be a l.c.\;group of bounded polynomial growth and assume that $G$
has no non-trivial subgroup $C$ as in Theorem\;\ref{th73}\,. then the
following holds:
\\[1mm]
{\rm (i)} \ There exists a semidirect product
\,$\widetilde G = \widetilde N \rtimes K$ such that $K$ is compact,
$\widetilde N$ is a connected, simply connected,
nilpotent Lie group, $K$ acts faithfully on $\widetilde N$, and a
continuous injective homomorphism \,$j\!: G \to \widetilde G$ such
that $j(H)$ is closed for every closed compactly generated subgroup $H$
of $G$\,.
\\[.5mm]
{\rm (ii)} \ There exists a maximal nilpotent normal subgroup $N$ and
a maximal solvable normal subgroup $R$ in $G$\,. \,$G/R$ is compact, $R/N$ has
an abelian subgroup of finite index.\vspace{-2mm}
\end{Thm}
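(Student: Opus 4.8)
The plan is to reduce to the compactly generated case by exhausting $G$ with closed compactly generated (c.c.g.) subgroups and then gluing their algebraic hulls together. First I would fix, as in the proof of Theorem \ref{th73}, a c.c.g.\ subgroup $H_0$ of maximal growth $d$. By hypothesis the maximal subgroup $C$ of Theorem \ref{th73} is trivial, so by Proposition \ref{pro72}\,(i) every c.c.g.\ subgroup $H \supseteq H_0$ has trivial maximal compact normal subgroup, and by Proposition \ref{pro72}\,(iii) all such $H$ share a common connected nil-shadow $\widetilde N$, with algebraic hull $\widetilde N \rtimes K_H$ where $K_H$ is the closure of the image of $H$ in a fixed $K = \Aut$-type group acting faithfully on $\widetilde N$. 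The key point is that for $H \subseteq H'$ (both $\supseteq H_0$), the embedding $H \hookrightarrow H'$ induces, via uniqueness of algebraic hulls (\cite{Lo3a}\;Th.\,3), a compatible inclusion $\widetilde N \rtimes K_H \hookrightarrow \widetilde N \rtimes K_{H'}$ that is the identity on the $\widetilde N$-factor and an inclusion $K_H \subseteq K_{H'}$ of closed subgroups of $K$. Since $K$ is compact, the increasing net $(K_H)$ stabilizes: there is a single c.c.g.\ $H_1 \supseteq H_0$ with $K_H = K_{H_1} =: K$ for all c.c.g.\ $H \supseteq H_1$. Set $\widetilde G = \widetilde N \rtimes K$.

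Next I would construct the map $j\colon G \to \widetilde G$. For each c.c.g.\ $H \supseteq H_1$ we have a fixed embedding $j_H\colon H \to \widetilde G$ with closed image, and the compatibility above gives $j_{H'}|_H = j_H$ whenever $H \subseteq H'$. Since $G$ is the directed union of its c.c.g.\ subgroups containing $H_1$, these glue to a homomorphism $j\colon G \to \widetilde G$; it is injective because each $j_H$ is, and continuous because each c.c.g.\ subgroup is open in $G$ when we additionally arrange $H$ to be open (for $x \in G$, pick an open c.c.g.\ subgroup containing $H_1$ and $x$). Closedness of $j(H)$ for arbitrary closed c.c.g.\ $H$ follows: enlarge $H$ to a c.c.g.\ $H' \supseteq H \cup H_1$, note $j(H')$ is closed in $\widetilde G$ and $j(H)$ is closed in $j(H')$ (it is the image of a closed subgroup under the topological isomorphism $H' \cong j(H')$, by \cite{HR}\;Th.\,5.33). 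This proves (i).

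For (ii), put $N = j^{-1}(\widetilde N)$ and $R = j^{-1}(\widetilde N \rtimes K_0)$ where $K_0$ is a maximal abelian subgroup of $K$ containing... — more precisely, let $R$ be defined so that $j(R)$ lies in $\widetilde N$ times an abelian subgroup of $K$; concretely $R/N \cong$ (closure of image of $R$ in $K$), which will be abelian by the $\widetilde N$-abelian structure. For each c.c.g.\ $H \supseteq H_1$, $N \cap H = j^{-1}(\widetilde N) \cap H = \nil(H)$ by Proposition \ref{pro72}\,(ii), so $N$ is locally the nil-radical; using that $\nil(H) = \nil(H') \cap H$ (again Proposition \ref{pro72}\,(ii) applied inside $H'$) one checks $N$ is a nilpotent normal subgroup of $G$ of bounded nilpotency class (bounded by that of $\widetilde N$), hence nilpotent, and it is maximal since any nilpotent normal subgroup meets each c.c.g.\ $H$ in a nilpotent normal subgroup, thus inside $\nil(H) = N \cap H$. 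Similarly $R$ is solvable normal and maximal, with $R/N$ abelian-by-finite because its image in $K$ is, after passing to a finite-index subgroup, contained in a maximal torus of $K^0$ (using that $\widetilde N \rtimes K$ is the generic model and $K$ compact). Finally $G/R$ is compact: $G/R$ embeds (via $j$) into $\widetilde G / (\widetilde N \rtimes K_0) \cong K/K_0$, which is compact.

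The main obstacle I expect is the gluing/compatibility argument: one must verify carefully that the algebraic hulls $\widetilde N \rtimes K_H$ for varying c.c.g.\ $H$ can be chosen inside a single ambient $\widetilde G$ with the inclusions being literal inclusions (not merely abstract isomorphisms), which requires invoking the uniqueness statement \cite{Lo3a}\;Th.\,3 functorially — i.e., that a closed co-compact embedding $H \hookrightarrow H'$ induces a canonical embedding of algebraic hulls fixing the common nil-shadow. Once that functoriality is pinned down, the stabilization of $(K_H)$ by compactness of $K$ and the direct-limit construction of $j$ are routine, and the identification of $N$, $R$ and compactness of $G/R$ follow from Proposition \ref{pro72} exactly as in the compactly generated setting.
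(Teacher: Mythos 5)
Your starting point (exhaust $G$ by closed compactly generated subgroups of maximal growth and use Proposition~\ref{pro72} to see that they all share the connected nil-shadow $\widetilde N$) is also how the paper begins, but the step on which your whole construction of $\widetilde G$ rests is false: an increasing net of closed subgroups of a compact group need not stabilize, since compact groups do not satisfy the ascending chain condition on closed subgroups, and in this situation it genuinely does not stabilize. Concretely, take Example~\ref{ex75} with $\gamma$ rational, say $\gamma=1$: the c.c.g.\ subgroups $H_n=\C\rtimes\frac{1}{n!}\Z$ have algebraic hulls $(\C\times\R)\rtimes K_{H_n}$ with $K_{H_n}$ cyclic of order $n!$, acting by roots of unity on the $\C$-factor; these strictly increase, and their union is a dense, non-closed subgroup of the circle. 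Moreover there is no a priori ambient compact $K$ containing all the $K_{H}$: each $K_{H}$ lives inside the hull of the particular larger subgroup used, so even the phrase ``closure of the image of $H$ in a fixed $K$'' presupposes what has to be built. The correct substitute is to construct from the compatible hulls an action of $G$ on $\widetilde N$ and to prove that the resulting image of $G$ (modulo its kernel, which is the nil-radical $N$) is \emph{relatively compact}; this is the genuinely nontrivial point, and the paper gets it by observing that $\alpha(H')$ is relatively compact for every c.c.g.\ $H'$, hence every finitely generated subgroup of $\alpha(G)$ is relatively compact, and then invoking Merzljakov's theorem \cite{Me} (or an argument as for \cite{Ra}\;Th.\,8.31). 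Even granted this, one cannot simply take a union of hulls: $j(N)$ and $j(G)$ need in general not be closed, and the paper's proof therefore reduces to the case $N$ connected via \cite{Lo3a}\;Prop.\,1.4 (the auxiliary group $G_{\R}$), redoes the semidirect-product construction of \cite{Lo3a} there, and finally factors out the subgroup $P_3$ of Theorem~\ref{th73}\,(i); the closedness of $j(H)$ for c.c.g.\ $H$ is extracted from the fact that $H'\,\nil(H)_{\R}$ is open in $G_{\R}$, not from closedness of $j(H')$ in an ambient hull chosen in advance.

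Part (ii) contains a second genuine error: you deduce compactness of $G/R$ from a continuous injection of $G/R$ into the compact group $K/K_0$, but a continuous injective image in a compact group need not be closed, so this proves nothing (in Example~\ref{ex75}, $G/N\cong\Q$ injects continuously into the circle with dense, non-compact image). The paper proves (ii) by a different, Lie-theoretic route: by Theorem~\ref{th73} (with $C$ trivial) $G$ is a Lie group; \cite{Lo1}\;L.\,9 yields a closed normal subgroup $D$ with $G/D$ compact and $D^0$ equal to the connected radical; one shows that the identity component of $\overline{\alpha(D)}$ is abelian using \cite{Ra}\;Cor.\,8.33, and obtains $R$ as a finite extension of $D_0=\alpha^{-1}\bigl(\overline{\alpha(D)}^{\,0}\bigr)$, which simultaneously gives $G/R$ compact, $R/N$ abelian-by-finite, and maximality. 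Your sketch of maximality of $R$ and of the abelian-by-finite structure of $R/N$ (``contained in a maximal torus of $K^0$ after passing to finite index'') is not an argument; note also that the mere existence of a maximal solvable normal subgroup in a non-compactly-generated group needs the bounded-derived-length control that the paper's construction provides.
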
\noindent
This extends the result for discrete groups in
\cite{Wi}\;Th.\,3.1. A quantitative refinement in the discrete
case is \cite{Man}\;Th.\,9.10.\vspace{-1.5mm}
\begin{proof}
($\alpha$) \,Fix $H$ as before.
Put $N = \bigcup\{\nil(H')\!: H'\text{ c.c.g.\;subgroup, } H'\supseteq H\,\}$.
Then (arguing as before) Proposition\;\ref{pro72}\,(ii) implies that $N$ is
the maximal
nilpotent normal subgroup of $G$. As in the compactly generated case, we call
it the (non-connected) nil-radical of $G$\,.
\\
We can assume that $H$ is open in $G$\,. Triviality of $C$ implies
(Proposition\;\ref{pro72}\,(i)) that $H$ has no non-trivial compact
normal subgroup (the same for all $H'$ as above). Let
$\widetilde H = \widetilde N\rtimes K_H$ be a fixed algebraic hull of $H$.
We will consider $H$ as a subgroup of 
$\widetilde H$. Next, we want to construct an embedding of $N$ into
$\widetilde N$ and an action of $G$ on $\widetilde N$.
\\[.5mm]
Let $H'$ be a c.c.g.\;subgroup with $H'\supseteq H$ and $\widetilde H'$ any
algebraic hull of $H'$ with embedding \,$j_{H'}\!: H' \to \widetilde H'$.
By \cite{Lo3a}\;Th.\,3, there exists a uniquely determined continuous
homomorphism $\phi\!: \widetilde H \to \widetilde H'$ such that
$\phi|H = j_{H'}|H$\,.
Proposition\;\ref{pro72}\,(iii) \,(and uniqueness in \cite{Lo3a}\;Th.\,3)
implies that $\phi (\widetilde N) = \nil (\widetilde H')$, hence
\,$j = \phi^{-1} \circ j_{H'}|\nil (H')$
is defined and by routine calculation it follows from \cite{Lo3a}\;Th.\,3
that this mapping does not depend on the choice of the algebraic hull
$\widetilde H'$.
Then it is easy to see that this gives a continuous homomorphism
$j\!: N \to \widetilde N$ which is the identity on $\nil (H)$.
In a similar way, we define
for $x \in \widetilde H'\,,\;n \in \widetilde N$\,:
$x \circ n = \phi^{-1}\bigl(j_{H'}(x)\,\phi(n)\,j_{H'}(x)^{-1}\bigr)$ and get
a continuous action of $G$ on $\widetilde N$. For
$x \in N$, we have \,$x \circ n = j(x)\,n\,j(x)^{-1}$. The action of $G$ on
$\widetilde N$ induces a continuous homomorphism 
\,$\alpha\!: G \to \Aut(\widetilde N/[\widetilde N,\widetilde N])$.
By \cite{Lo3a}\;Prop.\,3.3 \,(applied to $H'$), we have
$\ker \alpha = N$.
Similarly, there
is an action of $\widetilde H$ on $\widetilde N$ and on
$\widetilde N/[\widetilde N, \widetilde N]$\,. $\nil(\widetilde H')$ acts
trivially on $\widetilde N/[\widetilde N, \widetilde N]$,
hence $\alpha (H')$ is relatively compact for all c.c.g.\;subgroups $H'$ of
$G$ \,(in particular, all finitely generated subgroups of $\alpha (G)$
are relatively
compact). Then it follows from \cite{Me} \,(one can also use
an argument as in the proof of \cite{Ra}\;Th.\,8.31) that $\alpha (G)$ is
relatively compact.
\vspace{.8mm plus 1mm}
\item[($\beta$)] \,To show the remaining properties of (ii),
we use a similar
strategy as in \cite{Lo2}. Let $R^0$ be the connected radical of the Lie
group $G$. By \cite{Lo1}\;L.\,9 (and its proof, applied to $G/R^0$ and
using that $G^0/R^0$ is compact by \cite{Lo2}\;Prop.\,4) there exists a closed
normal subgroup $D$ of $G$ such that $G/D$ is compact and $D^0 = R^0$. Let
$E$ be a c.c.g.\;subgroup of $D,\ R_E$ the radical of $E$\,. Then
$R_E \supseteq D^0 \cap E$ is open, hence by \cite{Lo2}\;Prop.\,4, $R_E$
has finite index in $E$\,.
It follows that $\alpha (E)$ has an abelian subgroup of finite index.
\\[.5mm plus .5mm]
Put $\Cal D = \overline{\alpha(D)}$\,. By \cite{Ra}\;Cor.\,8.33
(\,$\Cal D^0/Z (\Cal D^0)$ is semisimple, hence there exists a finitely
generated subgroup $E_0$ of $D$ such that the image of $\alpha (E_0)$ is
dense in $\Cal D/Z (\Cal D^0)$\,; taking $E=\overline{E_0}$ we see that
$\Cal D/Z (\Cal D^0)$ is finite), it follows that $\Cal D^0$ is abelian. Thus
$D_0 = \alpha^{-1} (\Cal D^0)$ is a solvable normal subgroup of $G$ and
$D_0/N\ \,(\cong \alpha (D_0)\,)$ is abelian.
$D/(D_0 \cap D) \cong DD_0/D_0 \cong \Cal D/\Cal D^0$ is finite, hence
$G/D_0$ is compact. Since $(G/D)^0$ is semisimple, we conclude that
$(G/D_0)^0$ is semisimple. Let $R$ be the subgroup of $G$ such that $R/D_0$
is the radical of $G/D_0$ \,(\cite{Lo2}\;Prop.\,3). Then $R$ is the maximal
solvable normal subgroup of $G$ and $R/D_0$ is finite. This implies the
properties stated in (ii). As before, we call $R$ the radical of $G$\,.
\vspace{2mm plus1mm}
\item[($\gamma$)] \,For the construction of an embedding as in (i), we will
first show that we may assume that $N$ is connected. Openness of $H$ implies
(Corollary\;2.6) that\linebreak 
$|H'/H| < \infty$ for every c.c.g.\,subgroup $H' \supseteq H$\,. Consequently,
$\nil (H) = \nil (H') \cap H$ has finite index in $\nil (H')$\,. This gives
(see \cite{Lo3a}\,1.2)
\,$j(\nil (H'))\subseteq \nil (H)_{\R}$ for all $H'$ as above and it follows
that $\nil (H)_{\R}$ is a $G$-invariant subgroup of $\widetilde N$
containing $j(N)$\,.
\\[0mm plus .5mm]
Now we apply \cite{Lo3a}\;Prop.\,1.4 and get
(by ``pasting'' with $\nil(H)_{\R}$\,) a continuous embedding of
$G$ into a l.c.\,group $G_{\R}$ 
\,(in general, $j(N)$ will not be closed in $\nil(H)_{\R}$ and then
the image of $G$ will not be closed in $G_{\R}$\,). The proof of
\cite{Lo3a}\;Prop.\,1.4
shows that \,$H' \nil (H)_{\R}$ \,is open in $G_{\R}$ and this coincides
(also topologically) with the group $H'_{\R}$ defined at the end of
\cite{Lo3a}\,2.1, in particular
$H'$ is closed in $G_{\R}$ for every c.c.g.\;subgroup $H'$ of $G$\,.
Furthermore,
\,$G_{\R} = \bigcup\,\{H'_{\R} : H' \text{ c.c.g.\;subgroup, }
\linebreak H'\supseteq H\}$\,.
$H'_{\R}$ has the same growth as the co-compact subgroup $H'$ and every
compact subset of $G_{\R}$ is contained in some $H'_{\R}$\,. It follows that
$G_{\R}$ has bounded polynomial growth, $\nil(H)_{\R}$ is the nil-radical of
$G_{\R}$ (e.g., by \cite{Lo3a}\;Cor.\,3.5) and $G_{\R}$ has no
non-trivial subgroup $C$
as in Theorem\;\ref{th73} \,(since again by \cite{Lo3a}\;Cor.\,3.5
\,$H'_{\R}$ has no non-trivial compact normal subgroup).
\vspace{.5mm plus 1mm}
\item[($\delta$)] \,If $N$ is connected, we can proceed as in the proof of
Theorem\;\ref{th2} (in \cite{Lo3a}). We just indicate the steps. By (ii),
there exists a closed normal
subgroup $G_1$ of $G$ such that $G/G_1$ is compact and $G_1/N$ is abelian.
Then (using connectedness of $N$), $(G_1,N)$ satisfies the assumptions of
\cite{Lo3a}\,2.1 \,(although $G_1$ need not be compactly generated). Choose
$\Cal C$ as in \cite{Lo3a}\;Prop.\,2.15 and put
$K_1 = \overline{\Cal C}$\,. Then
$K_1$ is compact \,(as before, it suffices to consider the restrictions of the
automorphisms to $N$ and then this follows from relative compactness of
$\alpha(G)$ which was shown in $(\alpha)$\,).
Define $G_2 = G_1 \rtimes K_1$ and use
\cite{Lo3a}\;Prop.\,1.4 to get a common extension $G_3$ of $G$ and
$G_2$\,.
Then $G_3/G$ and $G_3/N_3$ are compact, where $N_3$ denotes the nil-radical
of $G_3$\,. Finally, put $G_4 = G_3/P_3$ where $P_3$ is the
maximal union of compact subgroups as in Theorem\;\ref{th73}\,(i), and then
as in $(\gamma)$ we consider $\widetilde G = (G_4)_{\R}$ which satisfies the 
required properties.\vspace{1mm plus 1mm} 
\end{proof}

\begin{Ex}	\label{ex75}
Take $\gamma \in \R\setminus\{0\}$ and consider $G = \C \rtimes \Q$ with the
action 
$q \circ z = e^{2\pi i\gamma q} z$ for $q\in\Q\,,\;z\in\C$
\,($\Q$ equipped with the discrete topology) -- compare
\cite{Lo3a}\;Ex.\,4.12\,(a).
Then $\widetilde G = (\C \times \R) \rtimes K$ with
$K = \{\beta \in \C : |\beta| = 1\},\linebreak \beta\circ(z,t) = (\beta z,t)$ and
embedding
\,$j(z,q) = (z,q,e^{2 \pi i\gamma q})$. Here $j(G)$ is not closed, 
${j(G)}^{-} = \{(z, t, e^{2\pi i\gamma t}) : z \in \C,\,t \in \R\}\,\cong
\C\rtimes \R$ \;(with standard topology on $\R$).
For $\gamma$ irrational, we have
$N = \C$ and for $\gamma \in \Q \setminus \{0\}\,,\
N = \C\times \frac1{\gamma}\,\Z$\,.
\end{Ex}
\vfill
\section{Weighted group algebras} 
\medskip
\begin{varrem}\label{di81}		
If $G$ is a l.c.\;group, a function $\omega\!: G \to [1, \infty]$ is
called a {\it weight}, if it is bounded on compact subsets,
upper semicontinuous (\cite{HR}\;Def.\,11.9) and satisfies
$\omega (xy) \leq \omega(x)\,\omega (y) \text{ for all } x, y \in G$\,.
Moreover, we will always assume that $\omega$
is {\it symmetric}, i.e.,
$\omega (x^{-1}) = \omega (x)\text{ \,for all } x \in G$ \,(see \cite{FG1}
for further comments and examples). $\omega$ is said to satisfy
the Gelfand-Naimark-Raikov (GNR) condition at $x$ if
\,$\lim\limits_{k \to\infty} \omega (x^k)^{\frac1k} = 1$ \;\vspace{.1mm}
(in \cite{FG2} this is called
GRS, Gelfand-Raikov-Shilov condition). For a weight $\omega$\,,
\,$L^1(G,\omega)$ denotes the corresponding weighted $L^1$-space with norm
$\int\limits_G |f(x)|\,\omega (x)\,dx$\,. This is a Banach algebra with
ordinary convolution as multiplication. Since $\omega$ is symmetric,
the standard involution is defined on $L^1(G,\omega)$.\vspace{2mm plus.5mm}

Assume that $G$ is compactly generated of polynomial growth. Then it was
shown in \cite{Lo2}\;Cor.\,1 that $L^1(G)$ is a symmetric Banach algebra.
In \cite{FG1} this
question was investigated for $L^1(G,\omega)$ and (using almost the same
technique)
it was completed in \cite{FG2} (and presented in a better readable manner).
It turned out that the GNR (or GRS)
condition is necessary and sufficient for symmetry of $L^1(G,\omega)$.
Necessity results from Gaussian estimates
for random walks due to Hebisch and Saloff-Coste (extending earlier work of
Varopoulos). The proof of sufficiency adapts the original method of Ludwig
that was used for ordinary $L^1(G)$. Furthermore, it was shown in
\cite{FG2}\;Th.\,3.3 that under the assumption of a uniform analogue of
the GNR-condition (called condition (S), see below) symmetry of
$L^1(G,\omega)$ can be deduced from symmetry of $L^1(G)$ more directly.
It was noted there that
for nilpotent groups there is also a direct argument for
equivalence of the two conditions. Using our embedding,
Theorem\;\ref{th2}, we will show
below that this can be seen in general (although this does not give a
shorter argument in the general case).\vspace{1.2mm plus .5mm}

Let $C$ be the maximal compact normal subgroup of $G$, \;$N/C = \nil (G/C)$
and $(H_n)$ the normal series defined by Theorem\;\ref{th65}\,(b) and let
$L_1$ be a closed subgroup of $G$ such that $L_1 N$ is closed and co-compact
in $G$, \,$L_1\supseteq H_1$ and $[L_1,L_1] \subseteq H_1$ \,(see
\cite{Lo3a}\;Cor.\,4.10\,(d) applied to $G/H_1$ and
Remark\;\ref{rem53}\,(d); one can take the group $L_1$ of
\cite{Lo3a}\;Cor.\,4.9, see also the proof of Theorem\;\ref{th52}). Let
$B_0 = \{y_1, ..., y_{m_1}\}$ be a finite
subset of $L_1$\,, generating a co-compact subgroup of the abelian group
$L_1/H_1$\,. $B_i = \{y_{m_i+1}, ..., y_{m_{i+1}}\}$ shall be a finite subset
of $H_{i-1}$\,, generating a co-compact subgroup of $H_{i-1}/H_i$ \;(by
Theorem\;\ref{th52}\,(ii), $H_{i-1}/H_i$ is abelian and compactly generated).
Put $B = B_0 \cup B_1$\,.
\end{varrem}
\begin{Thm}[\cite{FG2}\;Th.\,1.3]  \label{th82}        
Let $G$ be a compactly generated group of polynomial growth, $\omega$ a
symmetric weight on $G$\,. Then the following statements are
equivalent.
\begin{enumerate}
\item[(i)] \ $L^1(G,\omega)$ is a symmetric Banach algebra.
\item[(ii)] \ $\omega$ satisfies the GNR-condition on $G$\,.
\item[(iii)] \ $\omega$ satisfies the GNR-condition for all $x\in B$,
where $B$ is a finite subset of $G$ as defined above.
\end{enumerate}\vspace{-2.5mm}
\end{Thm}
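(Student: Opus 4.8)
The plan is to exploit the reduction already available in the literature: by \cite{FG2}\;Th.\,3.3, symmetry of $L^1(G,\omega)$ follows from symmetry of $L^1(G)$ (which holds by \cite{Lo2}\;Cor.\,1, since $G$ has polynomial growth) once one knows that $\omega$ satisfies condition (S), a uniform version of the GNR-condition formulated in terms of a suitable parametrization of $G$. Necessity of (ii) — i.e. (i)$\Rightarrow$(ii) — is the Gaussian/random-walk estimate already cited in \ref{di81} (Hebisch--Saloff-Coste), so it can be quoted directly. The implication (ii)$\Rightarrow$(iii) is trivial since $B\subseteq G$. Hence the real content is the arc (iii)$\Rightarrow$(i), and the heart of the argument is to show that the GNR-condition at the finitely many generators in $B$ forces condition (S), after which \cite{FG2}\;Th.\,3.3 and \cite{Lo2}\;Cor.\,1 finish the proof.

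The key step is therefore a \emph{coordinate argument}. First I would pass to $G/C$, noting that $C$ compact normal does not affect symmetry of the weighted algebra nor the GNR-condition (a weight on $G$ averages to a weight on $G/C$, bounded above and below by comparable constants on cosets), so we may assume $G$ has no non-trivial compact normal subgroup and apply Theorem~\ref{th2} to get $\widetilde G=\widetilde N\rtimes K$. Using the series $(H_n)$ of Theorem~\ref{th52} together with the subgroup $L_1$ of \cite{Lo3a}\;Cor.\,4.9 (so $L_1N$ is co-compact, $[L_1,L_1]\subseteq H_1$, $L_1/H_1$ abelian), every element of $G$ can be written, up to a relatively compact factor, in ``Malcev coordinates of the second kind'': a product of bounded powers of the finite generating set $B_0\cup B_1\cup B_2\cup\cdots$ across the successive layers $L_1/H_1$, $H_0/H_1$, $H_1/H_2,\dots$. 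This is exactly the parametric description announced in the introduction (``generalizing Malcev's coordinates of the second kind''). The point is that $\tau(x)$ — and hence, by the growth estimates recorded in \ref{di51} and in Proposition~\ref{pro64}, the quantity $\log\omega(x^k)/k$ — is controlled polynomially by the sizes of these coordinate exponents.

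The main obstacle, and where the work lies, is to propagate the GNR-condition from the generators to \emph{all} of $G$ in the uniform form demanded by condition (S): one needs that $\lim_k \omega(x^k)^{1/k}=1$ holds not just pointwise but with estimates uniform over bounded coordinate data, and for this the only ``new'' directions beyond $B_0$ are the elements of $B_1$ (generators of $H_0/H_1$). Here Theorem~\ref{th65}(c)/Proposition~\ref{pro64}(c) are decisive: for $x\notin W_0C_j$ one has $c_1k^{1/j}\le\tau(x^k)\le c_2k^{1/j}$, so $\tau(x^k)$ grows strictly sublinearly; combined with submultiplicativity of $\omega$ and the GNR-condition at the finitely many $y_i$, a telescoping/induction up the layers $H_{n-1}/H_n$ yields $\omega(x^k)^{1/k}\to1$ with the required uniformity. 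The only genuinely delicate case is that of $B_1$, where one must check that the GNR-condition at those generators, feeding through the commutator relations $[C_i(\widetilde N),C_j(\widetilde N)]\subseteq C_{i+j+1}(\widetilde N)$, does not get amplified — essentially the same bookkeeping as in step $(\gamma)$ of the proof of Proposition~\ref{pro64}. Once condition (S) is established this way, one invokes \cite{FG2}\;Th.\,3.3 together with symmetry of $L^1(G)$ to conclude (i), closing the cycle of implications.
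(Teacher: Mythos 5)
Your overall skeleton agrees with the paper's: (i)$\Leftrightarrow$(ii) and the equivalence of these with the uniform condition (S) are simply quoted from \cite{FG2}, (ii)$\Rightarrow$(iii) is trivial, and the genuine content is (iii)$\Rightarrow$(S), to be attacked through a parametrization of $V^k$ along the series $(H_n)$ and the subgroup $L_1$. But the decisive step of that implication is missing from your sketch, and the substitute you offer does not work. In the coordinate decomposition of $V^k$ (the analogue of Malcev coordinates of the second kind), the exponents attached to the layer $H_{i-1}/H_i$ are of size $k^i$, not $k$; correspondingly the $H_1$-remainder of an element of $V^k$ has word length of order $k$, even though the powers of each \emph{fixed} element of $H_1$ grow sublinearly. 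Condition (S) requires $\sup_{x\in V^k}\omega(x)=e^{o(k)}$, and for a general weight the only a priori control of $\omega$ by $\tau$ is the exponential bound $\omega(x)\le M^{\tau_V(x)}$ with $M=\sup_V\omega$ (so your claim that $\log\omega(x^k)/k$ is ``controlled polynomially by the coordinate exponents'' has no basis); likewise pointwise GNR at a generator $y\in B_i$ with $i\ge2$ only gives $\omega(y^{t})=e^{o(t)}=e^{o(k^i)}$ --- and GNR at those deeper generators is not even part of hypothesis (iii). Thus neither ``sublinear growth of $\tau(x^k)$ plus submultiplicativity'' nor a ``telescoping up the layers'' as you describe can produce the uniform bound; the tools you invoke (Proposition\;\ref{pro64}\,(c) and the estimates in step $(\gamma)$ of its proof) control $\tau$, not $\omega$. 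You have also misplaced the difficulty: GNR at $B_1$ is \emph{assumed} in (iii), so there is nothing delicate about $B_1$; the delicate directions are $H_1$ and the deeper layers, exactly where (iii) gives no hypothesis.

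What is actually needed, and what the paper proves, is a sharpening of the coordinate inclusion: after first showing $V^k\subseteq\{\mathbf y_0^{\mathbf t_0}\cdots\mathbf y_r^{\mathbf t_r}y:\ y\in C_0,\ |t_j|\le sk^i \text{ on layer } i\}$, one chooses the deeper generating sets $B_r$ ($r\ge2$) to consist of commutators $[y_{j_1},y_{j_2}]$ and unipotent commutators $[y_{j_1},y_{j_2}]_u$ of elements of $B=B_0\cup B_1$, and verifies via the algebraic hull the structural facts that make identities such as $[y_{j_1},y_{j_2}]^{kl}\equiv[y_{j_1}^k,y_{j_2}^l]$ (modulo lower terms) available --- centrality of $H_1/D$ in $G/D$, triviality of $\iota(x)_u$ on $[N,\widetilde N]$, co-compactness of the images of $[L_1,L_1]$ and $[L_1,N]_u$. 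These identities convert a power with exponent of size $k^i$ into a product of a \emph{uniformly bounded} number of powers of elements of $B$ alone, with exponents $O(k)$, up to a factor in a fixed compact set; only then do submultiplicativity and GNR at the finitely many elements of $B$ yield condition (S), and hence (i). This bounded-length product representation over $B_0\cup B_1$ with linear exponents is the heart of the proof, and it is precisely the step your proposal asserts (``the same bookkeeping as in step $(\gamma)$ of Proposition\;\ref{pro64}'') without supplying an argument that could deliver it.
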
\noindent
The GNR condition means that $\omega$ should grow non-exponentially along
cyclic subgroups. (iii) says that this growth limit holds automatically on
$H_1$ if it holds on a sufficiently large finite set of elements outside
$H_1$\,. Recall that $H_1$ consists of those elements of $N$ where the local
growth is smaller than $1 \ (\,\lim \frac{\tau (x^k)}k = 0)$. 

\begin{proof}
($\alpha$) \,Let $V$ be any compact symmetric $e$-neighbourhood
generating $G$\,. It was shown in \cite{FG2}\;Th.\,1.3 (see also 5.1)
that $L^1 (G,\omega)$ is symmetric iff (ii) holds and that this is
also equivalent to
$\lim\limits_{k \to\infty}\,\bigl(\sup\,\{\omega(x): x\in V^k\}\bigr)^{1/k}
= 1$ \;(they call this condition (S)\,).
In particular, (iii) is necessary for symmetry. We will show
that (iii) \,(hence also (ii)) implies condition (S). Let $r$ be the minimal
index for which $H_r = C$\,. If $r = 0$ \,(i.e., $N = C$), then $G$ is
compact and everything is trivial.
\\[1mm]
For \;$\mathbf t_i= (t_{m_i +1},\dots,t_{m_{i+1}}) \in \Z^{m_{i+1} - m_i}
\ (m_0 = 0)$, we write
\,$\mathbf y_i^{\mathbf t_i} =
y_{m_i + 1}^{t_{m_i+1}}\dots y_{m_{i+1}}^{t_{m_{i+1}}}$\,.
\vspace{2mm}
\item[($\beta$)] \,We claim that there exists a compact subset $C_0$ of $G$
and $s\geq 1$ such that\vspace{-1mm}
\begin{align*} V^k\;\subseteq\
\bigl\{\,\mathbf y_0^{\mathbf t_0} \dots \mathbf y_r^{\mathbf t_r} y \,:\
y \in C_0\,,\ &|t_j| \leq s\,k \text{ \;for } 0 < j \leq m_1 \text{ \ and }
\\[-.5mm]
&|t_j| \leq s\,k^i \text{ \;for } m_i < j \leq m_{i+1}\,,\ i > 0\,\bigr\}
\end{align*}\vskip-1.5mm\noindent
holds for all $k$ \ (with \,$C_0\,,\,s$ \,not depending on $k$\,, but of
course they will depend on $V$ and $B_i$\,; it is easy to see that there is
a corresponding opposite inclusion).
\\[1mm]
For $r = 1$, the claim follows immediately from the definition of
$B_0\,,\,B_1$
\,(similarly as below, one can first consider the projection to $G/N$ and
then the remaining contribution from $N$\,). Using induction with respect
to $r$, we assume that the claim holds for $r - 1$ and \,(passing to $G/C$)
that $H_r$
is trivial. Applying the hypothesis to the image of $V$ in $G/H_{r-1}$\,, we
get a compact subset $C_0\subseteq G$ and $s \geq 1$ so that every
$x \in V^k$ can be written as
\,$x = \mathbf y_0^{\mathbf t_0}\dots\mathbf y_{r-1}^{\mathbf t_{r-1}} y\,z$
\,with $y \in C_0\,,\;z\in H_{r-1}$\,, $t_j$ as above. By
Theorem\;\ref{th65}\,(c), there exist $c_i > 0$ such that
$\tau_V (y_j^k) \leq c_i k^{\frac 1i}$ \,for
$m_i < j \leq m_{i+1}\,,\;0 < i < r$\,, similarly for $i = 0$. Put
\,$c = \sup\limits_{y' \in C_0} \tau_V (y')$\,, then
\,$\tau_V(y\,z\,y^{-1})\leq c +k+
k\,s\,\bigl(m_1 c_0 + \dots+(m_r - m_{r-1})\,c_{r-1}\bigr)$\,.
Since $H_r$ is 
trivial, it follows from the formulas given in step (a) of the proof of
Proposition\;\ref{pro64} that there exists $d > 0$ such that
\,$\tau_{V\cap H_{r-1}}(u)\leq d\,\tau_V(u)^r$ \,for all $u\in H_{r-1}$\,. By
the construction of $B_r$\,, there exists a compact subset $C'$ in $H_{r-1}$
and $s' > 0$ such that every \,$u \in (V \cap H_{r-1})^k$ can be written
as \,$u = \mathbf y_r^{\mathbf t_r} y'$ with $y' \in C',\;|t_j| \leq s'k
\text{ \,for } m_r < j \leq m_{r+1}$\,.
Putting this together gives a representation of \,$y z y^{-1}$ with bounds
as claimed above (when enlarging \,$C_0\mspace{1mu},\mspace{.5mu}s$
\,appropriately), proving the
induction step.
\vspace{.7mm}
\item[($\gamma$)] \,Next, we claim that there exists a compact subset $C_1$
in $G$ and an integer $t > 0$ such that for all $k$\,, we have
\vspace{-2mm plus .3mm}
$$V^k\,\subseteq\,\{\,y_{i_1}^{u_1}\dots y_{i_t}^{u_t} y :\
y \in C_1\,,\,|u_j| \leq t\,k\,,\,0 < i_j \leq m_2
\text{ \,for } j = 1,\dots, t\,\}.\vspace{-1.8mm}$$
Thus the elements of $V^k$ can be written as products of powers from $B$ with
a uniformly bounded number of factors and the exponents bounded by a multiple
of~$k$ \,(i.e., $t\,,\,C_1$ are independent of $k$\,, the $i_j$ need not be
increasing). Property~$(S)$ follows
immediately from the claim if (iii) holds.  For $r = 1$ the claim is just a
restatement of that in $(\beta)$. Again, we use induction on $r$, assuming
that
the claim holds for $r-1$ and that $H_r$ is trivial. Then, as in $(\beta)$,
we apply the hypothesis to $G/H_{r-1}$, obtaining a representation as claimed, 
up to a remainder belonging to $H_{r-1}$. This remainder can be estimated
as in $(\beta)$ and it remains to be shown that every power
$y_j^{t_j} \text{ with } |t_j| \leq s k^r,\,m_r < j \leq m_{r+1}$ has a
product representation as claimed above. This will be done by a special
choice of the set $B_r$ \,(in fact, since $H_{r-1}$ is abelian, it is not
hard to see that this gives corresponding representations for any other choice
of $B_r$).
\\
We start with the case $r = 2$ \,(which is slightly more complicated). Then
$H_2$ is trivial, hence by Theorem\;\ref{th52}\,(ii), 
$[\widetilde N,\widetilde N]\supseteq H_1$ is central in
$\widetilde N$. Now, we argue as in Remark\;\ref{rem53}\,(c). Put 
$D = [\widetilde N,N] \cap N$. It is not hard to see that
$D \subseteq H_1$ is closed, $N/D$ torsion free abelian. We choose $B_2$
by first selecting a set $B'$ covering $H_1/D$ and then a contribution $B''$
for $D$\,.
\\
It will be no restriction to assume that $G = NL_1$ \,(see also
Lemma\;\ref{le63}\,). Let $\widetilde N \rtimes K$ be the algebraic hull,
then 
$[L_1, L_1] \subseteq N$ implies that $K$ is abelian and \,(replacing $L_1$ by
a subgroup of finite index) we may assume that $K$ is connected. Recall
\,(Remark\;\ref{rem53}\,(d)\,) that $N/H_1 = \nil (G/H_1)$. This implies
$N/D = \nil (G/D)$. Since $N^0\triangleleft \widetilde N$
\,(\cite{Lo3a}\;Prop.\,4.4\,(c)\,),
we have $[\widetilde N,N^0] \subseteq D$, hence $(N/D)^0 = N^0D/D$ \,(being a
quotient of $N^0/[\widetilde N, N^0]$) is an $\FC_G$\,-\,group. Then \,(see
the comment
after \cite{Lo3a}\;Cor.\,4.10) $N/D$ is an  $\FC_G$\,-\,group. By
assumption, $[L_1, L_1] \subseteq H_1$\,. Then it follows from
\cite{Lo3a}\;Cor.\,4.10\,(d)
that $G/([L_1,L_1]D)^-$ has abelian growth, hence \,(by minimality of
$H_1$), the image of \,$[L_1, L_1]$ in $H_1/D$ must be co-compact. Thus
we may choose $B'$ by taking
$[y_{j_1}, y_{j_2}]\,,\,1 \leq j_1 < j_2\leq m_1$\,.
By \cite{Lo3a}\;Prop.\,4.4\,(a), we have
$\widetilde N = N^0\widetilde M $, hence
$[\widetilde N, \widetilde N] =
[\widetilde M, \widetilde M]\,[\widetilde N, N^0]$\,. Thus $G$ acts
trivially on \,$[\widetilde N, \widetilde N]/[\widetilde N, N^0]$ and it
follows that $H_1/D$ is contained in the centre of $G/D$. This implies that 
$[y_{j_1}, y_{j_2}]^{kl} = [y_{j_1}^k, y_{j_2}^l]\!\mod D \text{ \,for all }
k, l \in \Z$ and then the desired product representation \,(with a 
remainder in $D$) of powers from $B'$ can be obtained.
\\[0mm plus .5mm]
We have $[\widetilde N, N] = [\widetilde M, N]\,[N, N]$\,. For
$x \in G\,,\,z \in \widetilde N$, we write
\,$[x, z]_u =\linebreak \iota(x)_u(z)\,z^{-1}$
\,(where \,$\iota(x)_u$ denotes the unipotent part of the Jordan decomposition
[\,\cite{Lo3a} 2.1]; ordinary commutators are defined as $[x,y]=xyx^{-1}y^{-1}$
as in \cite{Lo3a}\;p.\,9).
If \,$x = x_2 x_1$ with
$x_1\in K\,,\, x_2 \in \widetilde M$, one has by
\cite{Lo3a}\;Cor.\,4.5
\,$[x, z]_u = [x_2, z]$\,.
By \cite{Lo3a}\;Prop.\,4.4\,(a),
$(GK) \cap \widetilde M$ is co-compact in $\widetilde M$ and it follows
that the image of \,$[L_1, N]_u$ is co-compact in $D/[N, N]$. Thus we may
choose $B''$
by taking \,$[y_{j_1}, y_{j_2}]_u$ \,for $1 \leq j_1 \leq m_1 < j_2 \leq m_2$
\,and \,$[y_{j_1}, y_{j_2}]$ \,for \,$m_1 < j_1 < j_2 \leq m_2$\,. As
before, we have 
\,$[y_{j_1}\,, y_{j_2}\,]_u^{kl} = [y_{j_1}^k\,, y_{j_2}^l\,]_u 
\text{ \,for }
1\! \leq\! j_1\! \leq\! m_1\! < j_2\! \leq\! m_2$
\,(since \,$[N,\widetilde N]$ is connected and an
$\FC_{\widetilde G}$\,-\,group,
it follows from \cite{Lo3a}\,2.5 that $\iota(x)_u$ is the identity on
\,$[N, \widetilde N] \supseteq D \text{ \,for all } x \in G$\,)
\,and also for
$m_1 < j_1 < j_2 \leq m_2$\,. This gives the product
representation for powers from $B''$.
\\[.1mm]
For $r > 2,\ [L_1, H_{r-2}]_u \cup [H_{r-2}, H_{r-2}]$ generate a
co-compact subgroup of $H_{r-1}$\,. As above, we can choose
$B_r =\{\,[y_{j_1}, y_{j_2}]_u\!: 1 \leq j_1 \leq m_1\,,\,
m_{r-1} < j_2 \leq m_r\} \cup \linebreak\{\,[y_{j_1}, y_{j_2}] :
\,m_{r-1} < j_1 <j_2 \leq m_r\}.$
\end{proof}

\begin{Rem} \label{rem83}	   
One can extend the parametrical description used above as follows. We can
assume that the abelian group $L_1/(L_1 \cap N)$ is torsion free, thus
\linebreak $L_1/(L_1 \cap N) \cong \R^{k_0} \times \Z^{l_0},\,
H_{i-1}/H_i \cong \R^{k_i} \times \Z^{l_i}$. Choosing $B_i$ so that the
images in the
quotient group give a basis for $\R^{k_i}$ and a generating set for
$\Z^{l_i}$, one can consider 
$\mathbf y_i^{\mathbf t_i} \text{ for }
\mathbf t_i \in \R^{k_i} \times \Z^{l_i}$. Assuming that $H_r$ is trivial,
it follows that every $x \in L_1 N$ has a unique representation
\,$\mathbf y_0^{\mathbf t_0}\dots \mathbf y_r^{\mathbf t_r}$ which
generalizes the coordinates of the second kind of \cite{Ma}\;\S\,2.
\vspace{2mm}
\end{Rem}
\section{On Gromov's construction} 
\medskip
\begin{varrem}\label{di91}	     
An essential tool for Gromov's proof in \cite{Gr} was the construction of
a ``limit space'' \,(denoted by $Y$ in \cite{Gr}) which can be associated
to any metric
space and is now called an {\it asymptotic cone}. See \cite{KT} for
further discussion and references. For the case of compactly generated
groups $G$ of 
polynomial growth, we want to give an explicit description in terms of
the algebraic hull (see also \cite{Br}\;Th.1.9 and Remark\;\ref{rem93}\,(b)
below).

We use similar notations as in \cite{Lo1}\;p.\,113f. Let $V$ be a
fixed relatively compact symmetric $e$-neighbourhood generating $G$ and
consider \,$\tau = \tau_V$ as in Definition\;\ref{def61} \,(in \cite{Lo1}
this is denoted as
$\lVert x\rVert$). \vspace{.5mm}We consider a fixed non-trivial
ultrafilter $\Cal U$ on~$\N$\,. Put\vspace{-1.5mm}
$\Cal X = \bigl\{(x_n)^{\infty}_{n = 1} \subseteq G\,:\
\dfrac{\tau (x_n)}n \text{ \,is bounded\,}\bigr\}$,
\,$\Cal M_{\Cal U}=\bigl\{(x_n)^{\infty}_{n = 1} \subseteq G\,:\
\lim\limits_{\Cal U}\dfrac{\tau (x_n)}n=0\,\bigr\}$.
Then $\Cal X$ is a group and
$\Cal X_{\Cal U}=\Cal X/\Cal M_{\Cal U}$ (left cosets) is called
the asymptotic cone. It consists of the 
equivalence classes $(x_n)\sptilde$ in $\Cal X$\vspace{.8mm}, where
\,$(x_n) \sim (y_n)
\text{ \,if \,}\lim\limits_{\Cal U} \dfrac{\tau({y_n^{-1}}x_n)}n = 0$\,.
For $d((x_n)\sptilde,(y_n)\sptilde)=
\lim\limits_{\Cal U} \dfrac{\tau({y_n^{-1}}x_n)}n\,,\ \Cal X_{\Cal U}$
becomes a complete
metric space, $\Cal X$ acts on $\Cal X_{\Cal U}$ by left multiplication and
$\Cal Y_{\Cal U}=\Cal X/\Cal N_{\Cal U}$ denotes the resulting group of
transformations on $\Cal X_{\Cal U}$ with
the topology of uniform convergence on the balls of $\Cal X_{\Cal U}$\,,
i.e. $\Cal N_{\Cal U}=\bigl\{(x_n)\in~\Cal X:\
(x_ny_n)\sim(y_n)\text{ for all }(y_n)\in\Cal X\,\bigr\}$.
The group $\Cal Y_{\Cal U}$ consists of isometries of $\Cal X_{\Cal U}$
\linebreak
(i.e., $\Cal Y_{\Cal U} \subseteq \operatorname{Isom} (\Cal X_{\Cal U})$\,)
and acts transitively.

For a connected, simply connected nilpotent Lie group $\widetilde N$
with Lie algebra $\tilde{\fr n}$\,, we consider a decomposition 
\,$\tilde{\fr n} = \bigoplus \limits^r_{j = 1} \fr w_j$ 
as in the proof of Proposition\;\ref{pro64}, \vspace{-2mm}i.e.,
$\bigoplus\limits_{i = 1}^j \fr w_i$ is the Lie algebra of
\,$C_{j-1}(\widetilde N)\ \:(j= 1,\dots,r)$. We introduce a new Lie product
on the vector space of \,$\tilde{\fr n}$\,, taking \,$[X,Y]^{\infty}$ as the
projection of
\,$[X, Y]$ to $\fr w_{i + j}$ for $X \in \fr w_i\,,\;Y \in
\fr w_j\,,\ i, j \geq 1$ and extending bilinearly. The resulting
Lie algebra $\tilde{\fr n}^{\infty}$ is called the {\it graded Lie algebra}
associated to $\tilde{\fr n}$\,, the corresponding simply connected Lie group
is denoted by $\widetilde N^{\infty}$ \,(the graded Lie group associated to
$\widetilde N$\,).
$\tilde{\fr n}^{\infty}$ (resp.~$\widetilde N^{\infty}$) has the same
descending central
series as $\tilde{\fr n}$ (resp. $\widetilde N$). If
\,$\tilde{\fr n} = \bigoplus\limits_{j = 1}^r \fr w_j'$ is another
decomposition as above, 
$u\!: \fr w_1 \to \fr w_1'$ \,any linear isomorphism,
it is easy to see that this induces a unique Lie algebra isomorphism between
the associated graded Lie
algebras \,$\tilde{\fr n}^{\infty}$ and $\tilde{\fr n}^{\infty\,\prime}$.
In this
sense $\tilde{\fr n}^{\infty}$ and $\widetilde N^{\infty}$ are unique.
If $K$ is a
group acting on $\widetilde N$ by automorphisms so that the corresponding
representation on $\tilde{\fr n}$ is semisimple \,(e.g., $K$ a compact group
with a continuous action), one can choose $\fr w_j$ $K$-invariant.
Then $K$ induces automorphisms on $\tilde{\fr n}^{\infty}$ and
$\widetilde N^{\infty}$,
in particular $\widetilde N^{\infty} \rtimes K$ is well defined.
\end{varrem}
\begin{Thm}  \label{th92}     
Let $G$ be a compactly generated l.c.\;group of polynomial growth, $C$~its
maximal compact normal subgroup, $\widetilde G = \widetilde N \rtimes K$
denotes the algebraic
hull of $G/C$. Let $\Cal U$ be any non-trivial ultrafilter on $\N$\,.
Then the asymptotic cone $\Cal X_{\Cal U}$ of $G$ is homeomorphic to
$\widetilde N$. The group of isometries $\Cal Y_{\Cal U}$ is isomorphic to
$\widetilde N^{\infty} \rtimes K$.
\end{Thm}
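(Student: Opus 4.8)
The plan is to work entirely inside the algebraic hull. First I would dispose of $C$: if $V\supseteq C$ is a compact symmetric generating neighbourhood and $\dot V$ its image in $G/C$, then $\tau_{\dot V}(\pi(x))\le\tau_V(x)\le\tau_{\dot V}(\pi(x))+1$, so $\pi\!:G\to G/C$ carries $\Cal X$ onto the corresponding set for $G/C$ and induces — additive constants disappearing after division by $n$ — an isometry $\Cal X_{\Cal U}(G)\to\Cal X_{\Cal U}(G/C)$ commuting with the left actions; hence $\Cal X_{\Cal U}$ and $\Cal Y_{\Cal U}$ are unchanged and we may assume $C=(e)$. Then, by Theorem \ref{th2}, $G$ is a closed subgroup of $\widetilde G=\widetilde N\rtimes K$ with $\widetilde G/G$ compact and $\widetilde NG$ dense; being co-compact in $\widetilde G$, $G$ is compactly generated (\cite{MS}), and (as in the proof of Theorem \ref{th65}) one may pick the generating neighbourhood $\widetilde V$ of $\widetilde G$ so that $V:=\widetilde V\cap G$ generates $G$ and, after enlarging, $K\widetilde VK=\widetilde V$. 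Let $P\!:\widetilde G\to\widetilde N$ be the projection onto the $\widetilde N$-coordinate, $\pi_K\!:\widetilde G\to K$ the other, and $\varphi$ the homogeneous quasi-norm on $\tilde{\fr n}$ attached, as in step $(\alpha)$ of the proof of Proposition \ref{pro64}, to a $K$-invariant decomposition $\tilde{\fr n}=\bigoplus_{j=1}^r\fr w_j$ adapted to the descending central series. Combining the Guivarc'h estimates recalled there with the remark in step $(\gamma)$ that $\tau_{\widetilde V}$ depends only on the $\widetilde N$-coordinate up to a bounded error when $K\widetilde VK=\widetilde V$, and with Lemma \ref{le63}, one gets $c_1\varphi(\log P(x))\le\tau_V(x)\le c_2\varphi(\log P(x))$ for all $x\in G$, with fixed $c_1,c_2>0$.

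Next I would introduce the dilations $\delta_t\bigl(\sum_jX_j\bigr)=\sum_jt^{-j}X_j$ ($X_j\in\fr w_j$), which commute with the linear $K$-action on $\tilde{\fr n}$ and are automorphisms of the graded Lie algebra $\tilde{\fr n}^{\infty}$ of $\widetilde N^{\infty}$. For $(x_n)\in\Cal X$ write $x_n=\tilde n_nk_n$ with $\tilde n_n=P(x_n)$, $k_n=\pi_K(x_n)$; since $\varphi(\log\tilde n_n)=O(n)$, the vectors $\delta_n(\log\tilde n_n)$ are bounded, so $\Psi\bigl((x_n)\bigr):=\lim_{\Cal U}\delta_n(\log\tilde n_n)\in\tilde{\fr n}^{\infty}\cong\widetilde N^{\infty}$ and $k_\infty:=\lim_{\Cal U}k_n\in K$ exist. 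The analytic core is the standard Campbell--Hausdorff dilation estimate (going back to Pansu; cf.\ \cite{Br}): $\delta_t\,\mathrm{BCH}\bigl(\delta_t^{-1}A,\delta_t^{-1}B\bigr)=\mathrm{BCH}^{\infty}(A,B)+r_t(A,B)$ with $r_t\to0$ uniformly on bounded sets, where $\mathrm{BCH},\mathrm{BCH}^{\infty}$ are the Campbell--Hausdorff series of $\tilde{\fr n}$ and $\tilde{\fr n}^{\infty}$. Using $P(xy)=P(x)\cdot\bigl(\pi_K(x)\circ P(y)\bigr)$, $\pi_K(xy)=\pi_K(x)\pi_K(y)$, and that $\delta_n$ commutes with the $K$-action, this gives $\Psi\bigl((x_ny_n)\bigr)=\mathrm{BCH}^{\infty}\bigl(\Psi((x_n)),\,k_\infty\!\cdot\Psi((y_n))\bigr)$, so $\Phi\!:(x_n)\mapsto\bigl(\Psi((x_n)),k_\infty\bigr)$ is a group homomorphism of $\Cal X$ into $\widetilde N^{\infty}\rtimes K$, equivariant — via the identification $\Psi$ — with the left action of $\Cal X$ on $\Cal X_{\Cal U}$ for the natural action $(m,k)\cdot Y=m\cdot(k\cdot Y)$ of $\widetilde N^{\infty}\rtimes K$ on $\widetilde N^{\infty}$.

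It remains to verify three points. (1) $\Psi$ descends to a homeomorphism $\Cal X_{\Cal U}\to\widetilde N^{\infty}$, which gives the first assertion since $\widetilde N^{\infty}$ and $\widetilde N$ are homeomorphic (both are $\tilde{\fr n}$ via the exponential). Writing $P(y_n^{-1}x_n)=\pi_K(y_n)^{-1}\circ(\tilde p_n^{-1}\tilde n_n)$ (with $y_n=\tilde p_nq_n$) and using $K$-invariance of $\varphi$ up to constants, one gets $d_{\Cal X_{\Cal U}}\asymp\lim_{\Cal U}\varphi\bigl(\delta_n\log(\tilde p_n^{-1}\tilde n_n)\bigr)$; the BCH estimate applied to $\tilde p_n^{-1}\tilde n_n$ identifies this with a homogeneous-quasi-norm distance between $\Psi((x_n))$ and $\Psi((y_n))$, so $\Psi$ is injective and bi-Lipschitz onto its image, which is onto by (3) below (the image is invariant under left translations of $\widetilde N^{\infty}$ and contains $e$). (2) $\ker\Phi=\Cal N_{\Cal U}$: the action of $\widetilde N^{\infty}\rtimes K$ on $\widetilde N^{\infty}$ is faithful, since its restriction to $K$ is the same linear action on $\tilde{\fr n}$ as on $\tilde{\fr n}^{\infty}$ and $K$ acts faithfully on $\widetilde N$; hence an element of $\Cal X$ lies in $\Cal N_{\Cal U}$ exactly when $\Phi$ sends it to the identity, and $\Cal Y_{\Cal U}=\Cal X/\Cal N_{\Cal U}$ embeds into $\widetilde N^{\infty}\rtimes K$. (3) $\Phi$ is onto: choosing $x_n\in V^n$ with $\pi_K(x_n)$ as close as possible to a given $k$ yields $\pi_K(x_n)\to k$ (as $\pi_K(G)$ is dense in $K$, $\pi_K$ killing $\widetilde N$) and $\Psi((x_n))=0$, so $\{0\}\times K\subseteq\mathrm{im}\,\Phi$; and for $m\in\tilde{\fr n}^{\infty}$, writing $\exp(\delta_n^{-1}m)=c_ng_n$ with $c_n$ in a fixed compact set and $g_n\in G$ (co-compactness of $G$ in $\widetilde G$), a short BCH computation gives $\Phi\bigl((g_n)\bigr)=\bigl(k'\!\cdot m,\,k'\bigr)$ with $k'=\lim_{\Cal U}\pi_K(c_n^{-1})$, whence $(m,e)=(0,k'^{-1})\cdot\Phi((g_n))\in\mathrm{im}\,\Phi$; so $\mathrm{im}\,\Phi=\widetilde N^{\infty}\rtimes K$. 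Finally, the explicit action formula shows $\Phi$ descends to a homeomorphism for the topology of uniform convergence on the balls of $\Cal X_{\Cal U}$, so $\Cal Y_{\Cal U}\cong\widetilde N^{\infty}\rtimes K$.

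I expect the main obstacle to be making the two uniformities precise: the comparison $\tau_V\asymp\varphi\circ\log\circ P$ and, above all, the Campbell--Hausdorff dilation estimate with its lower-weight remainders controlled uniformly on bounded sets — this is essentially Pansu's theorem re-proved in the present setting. By contrast, the group-theoretic bookkeeping for $\Cal Y_{\Cal U}$ — the twist by $k_\infty$ in the product formula, faithfulness of the limit action, and correcting the $K$-component in the surjectivity argument — should be comparatively routine.
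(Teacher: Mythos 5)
Your argument is correct in substance and reaches the same endpoints, but the mechanism by which you identify the limit group structure differs from the paper's. The reductions (factoring $C$, passing to the hull via Theorem\;\ref{th2} and Lemma\;\ref{le63}, the comparison $\tau_V\asymp\varphi\circ\log\circ P$ from Guivarc'h's estimates as in step $(\alpha)$ of Proposition\;\ref{pro64}) coincide with the paper. After that, you build one global rescaling homomorphism $\Phi\colon\Cal X\to\widetilde N^{\infty}\rtimes K$, $(x_n)\mapsto(\lim_{\Cal U}\delta_n\log P(x_n),\lim_{\Cal U}\pi_K(x_n))$, whose multiplicativity rests on the uniform Campbell--Hausdorff dilation limit $\delta_t\,\mathrm{BCH}(\delta_t^{-1}A,\delta_t^{-1}B)\to\mathrm{BCH}^{\infty}(A,B)$ (Pansu/Breuillard, \cite{Br}), and you then identify $\ker\Phi=\Cal N_{\Cal U}$ by faithfulness of the limit action and get $\Cal Y_{\Cal U}\cong\widetilde N^{\infty}\rtimes K$ as image, with $\Cal X_{\Cal U}\cong\widetilde N^{\infty}$ via $\Psi$. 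The paper instead first treats $K$ trivial: it shows $\Cal M_{\Cal U}$ is normal (so $\Cal X_{\Cal U}$ is itself a group), identifies its Lie algebra through one-parameter subgroups $s\mapsto\Psi(sX^{\infty})$ and the polynomial product formula $\Psi(sX^{\infty})\Psi(tY^{\infty})=\Psi(Z^{\infty}(s,t))$ — reading off addition from the linear term and the graded bracket from the $st$-coefficient — and only then brings in $K$ by taking $\Cal U$-limits of the $K$-components of sequences, obtaining $\Cal Y_{\Cal U}\cong\widetilde N^{\infty}\rtimes K$ and $\Cal M_{\Cal U}/\Cal N_{\Cal U}\cong K$. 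In effect you take as your starting point the maps that the paper records only afterwards in Remark\;\ref{rem93}\,(b). What each buys: your route handles $\widetilde N$ and $K$ simultaneously and gives the kernel/quotient description of $\Cal N_{\Cal U},\Cal M_{\Cal U}$ directly, at the price of invoking the uniform BCH-dilation estimate as the central analytic input (which, as you say, is essentially Pansu's theorem re-proved, though in the nilpotent graded setting it is an elementary weight count); the paper gets by with the $\varphi$-estimates plus finite polynomial manipulations of the CH series and the one-parameter-group characterization of the Lie algebra, and it establishes en route that $\Cal M_{\Cal U}=\Cal N_{\Cal U}$ when $K$ is trivial.

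One small slip in your step (3): choosing $x_n\in V^n$ minimizing the distance of $\pi_K(x_n)$ to $k$ gives $\pi_K(x_n)\to k$, but such minimizers may have $\tau(x_n)$ comparable to $n$, so $\Psi((x_n))=0$ does not follow. Minimize instead over $V^{m(n)}$ with $m(n)\to\infty$, $m(n)=o(n)$ (density of $\pi_K(G)$ in $K$, coming from $\widetilde NG$ dense in $\widetilde G$, still applies); then $\tau(x_n)=o(n)$ forces $\Psi((x_n))=0$ and the rest of your surjectivity argument, including the correction by $(0,k'^{-1})$, goes through unchanged.
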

\begin{proof}
It follows immediately from the properties of $\tau$ and $\Cal X_{\Cal U}$
that the asymptotic cones of $G$ and $G/C$ can be identified. Similarly
(Lemma\;\ref{le63}),
when $G$ is embedded as a closed co-compact subgroup into another group.
Thus (Theorem\;\ref{th2}) it will be enough to consider the case
\,$G = \widetilde G = \widetilde N \rtimes K$\,.
First, we assume that \,$G = \widetilde N$\,.
\\[.3mm plus .3mm]
For \,$X^{\infty} = \sum\limits_{j = 1}^r W_j \in \tilde{\fr n}
= \bigoplus\limits_{j = 1}^r {\fr w}_j$ \,put
\,$y_n = \exp (\sum\limits_{j = 1}^r n^j\,W_j) \quad (n = 1, 2,\dots)$.
We claim that these sequences $(y_n)$ give representatives for all the 
equivalence classes of $\Cal X_{\Cal U}$ and $\Psi(X^{\infty})=(y_n)\sptilde$
defines a homeomorphism $\Psi$ between $\tilde{\fr n}$ and $\Cal X_{\Cal U}$\,.
\\[1.5mm plus .3mm]
Consider the function $\varphi$ on
$\tilde{\fr n}$ as in the proof of Proposition\;\ref{pro64}. Recall
that $\varphi (X_n)$ has equivalent growth to 
\,$\tau (\exp X_n) \text{ for } X_n \to \infty$. Thus for
\,$x_n = \exp X_n$\,,
we have $(x_n) \in \Cal X \text{ iff \,} \dfrac{\varphi (X_n)}n$ \,is
bounded.\vspace{.5mm} In particular, $(y_n)$, as defined above,
belongs to $\Cal X$, i.e.,
$\Psi\!: \tilde{\fr n} \to \Cal X_{\Cal U}$\,. For
$X_0, Y_0 \in \tilde{\fr n}$
put \,$Z_0 = \log(\exp X_0 \exp Y_0)$\,. In the proof of \cite{Gu}\;L.\,II.1,
it was shown that after suitable scaling of the norms on ${\fr w}_j$\,, one
has
\,$\varphi (Z_0) \leq \varphi (X_0) + \varphi (Y_0)$ \,for
\,$\varphi (X_0) + \varphi (Y_0) \geq 2$. By similar arguments, one can show
that 
\;$\varphi (Z_0) \leq
\varphi (X_0 + Y_0)^{\frac1r}
\bigl(\varphi (X_0) + \varphi (Y_0)\bigr)^{1 - \frac 1r}$
\,for \,$\varphi (X_0) + \varphi (Y_0) \geq 2$
\ (using the recursion formulas for the Campbell-Hausdorff series,
\cite{Va}\;(2.15.15), one gets that, in the notation of \cite{Gu} 
\,$\lVert Q (\bar x, \bar y)\rVert \leq
k\,\varphi (\bar x + \bar y)\,\varphi (\bar x)^{r - 1} \text{ \,for }
\varphi (\bar x) \geq \max (1, \varphi (\bar y))$\,). Without
scaling, this implies that there exists some $c > 0$ such that\vspace{-1.3mm}
$$\varphi (Z_0) \leq
c \,\varphi (X_0 + Y_0)^{\frac 1r}\,
\bigl(\varphi(X_0) + \varphi(Y_0)\bigr)^{1-\frac1r}\quad
\text{ for \ }\max\bigl(\varphi (X_0), \varphi (Y_0)\bigr) \geq c\,.
\vspace{-.8mm}$$
Hence if $(X_n),(X_n') \subseteq \tilde{\fr n}$ satisfy $\varphi (X_n),
\varphi (X_n') = O (n),\;\varphi (X_n - X_n') = o(n)$, then
$(\exp (X_n)) \sim (\exp (X_n'))$\,. Furthermore, it follows that
$\Psi$ is continuous for the metric of $\Cal X_{\Cal U}$\,. Given any 
$(x_n) \in \Cal X$, put\vspace{-3.5mm} 
\,$X_n = \log (x_n),\; X_n =\sum\limits_{j = 1}^r W_j^{(n)},\;
W_j^{\infty} = \lim\limits_{\Cal U}\, \dfrac{W_j^{(n)}}{n^j}\,,
\;X^{\infty} = \sum\limits_{j = 1}^r  W_j^{\infty}$.
Then $(x_n)\sptilde=\Psi(X^{\infty})$ and $X^{\infty}$ is the only choice.
Thus $\Psi$ is bijective and a homeomorphism.
\\[-.2mm]
In a similar way as in the proof of Proposition\;\ref{pro64}\,(a), one
can show that there exists some $c > 0$ such that 
\;$\varphi ([X_0, Y_0]) \leq
c\,\max\bigl(\,\varphi(X_0)^{\frac 1r}\,\varphi(Y_0)^{1-\frac 1r}\,,\,
\varphi(X_0)^{1-\frac 1r}\,\varphi(Y_0)^{\frac 1r}\bigr)$
\,for all $X_0, Y_0 \in \tilde{\fr n}$\,. This entails 
\,$\varphi\bigl(\,\ad (y) (X_0)\bigr) \leq c' \varphi (X_0)^{\frac 1r}\,
\tau (y)^{1-\frac 1r} \text{ \,for \,} \tau(y) \geq \varphi (X_0) \geq 1$
\,and then
\;$\tau (y\,x\,y^{-1}) \leq c''\,\tau(x)^{\frac 1r}\,\tau (y)^{1-\frac 1r}$
\,for \,$\tau(y) \geq  \tau (x), \ x, y \in \widetilde N$\,. It 
follows that if \,$(x_n), (y_n) \in X,\
\lim\limits_{\Cal U}\dfrac{\tau (x_n)}n = 0$\,, then
\,$(x_n y_n)\sim (y_n)$\,.\vspace{.3mm}
Thus for $G = \widetilde N$\,, we get that $\Cal M_{\Cal U}$ is
normal in $\Cal X$\,, i.e. $\Cal M_{\Cal U}=\Cal N_{\Cal U}$ and
$\Cal X_{\Cal U}$ is a locally compact group.
\\[0mm plus .3mm]
It follows immediately from the definition that
\,$s \mapsto \Psi (s X^{\infty}) = \Psi (X^{\infty})^s\,\ (s\in\R)$ defines a
one-parameter group in $\Cal X_{\Cal U}$ for
each $X^{\infty} \in \tilde{\fr n}\,.\ \Lie(\Cal X_{\Cal U})$ denoting
the family of all one-parameter groups, we get thus a mapping
$\Psi_0\!:\tilde{\fr n} \to \Lie(\Cal X_{\Cal U})$ which
is bijective. Let $Z_0(s,t)$ be defined
for $s X_0, t Y_0$ as above $(s, t \in \R)$. The Campbell-Hausdorff series
\,(\cite{Va}\;Th.\,2.15.4) and \cite{Va}\;Th.\,3.6.1 express $Z_0(s,t)$
as a polynomial in $s,t$\,. Applying this to the sequences defining
$\Psi (s X^{\infty}), \Psi (t Y^{\infty})$ and computing
 \,$\Psi^{-1}(\Psi (s X^{\infty})\,\Psi (t Y^{\infty}))$ as above, it follows
that
\,$\Psi (s X^{\infty})\,\Psi(t Y^{\infty}) = \Psi\bigl(Z^{\infty}(s,t)\bigr)$
where $Z^{\infty}(s,t)$ is a polynomial in $s, t$ \,(in
particular, when $\widetilde N$ is available, it gives a more direct argument
that
$X_{\Cal U}$ is a Lie group). The linear term of this polynomial is 
\,$s X^{\infty} + t Y^{\infty}$ and consequently\vspace{.3mm} 
\,$\Psi(X^{\infty} + Y^{\infty}) = \lim\limits_{n\to\infty}
\bigl(\Psi(\frac 1n X^{\infty})\,\Psi(\frac 1n Y^{\infty})\bigr)^n$.
One-parameter groups provide a standard tool to define the Lie algebra of a
Lie group (and more generally for connected l.c.\,groups, see also
\cite{Ta}\;sec.\,1.3). By \cite{Va}\;Cor.\,2.12.5, $\Psi_0$ is linear.
Hence the Lie algebra 
$\Lie(\Cal X_{\Cal U})$ is linearly isomorphic to $\tilde{\fr n}$\,.
\\[-.3mm plus .3mm]
For $X^{\infty}\in \fr w_i\,,\;Y^{\infty}\in \fr w_j$\,, we compute the
coefficient of \,$s\,t$ in $Z^{\infty}(s, t)$, the limit procedure contracts
it to the $i + j$\,-component of $[X^{\infty}, Y^{\infty}]$\,, i.e., to
$[X^{\infty}, Y^{\infty}]^{\infty}$. It follows that
$\Lie(\Cal X_{\Cal U})$
is isomorphic (including the Lie bracket) to $\tilde{\fr n}^{\infty}$,
\;$\Psi$ realizes the exponential mapping. The group
structure of \,$\Cal Y_{\Cal U} = \Cal X_{\Cal U}$ is that of
$\widetilde N^{\infty}$ \,(i.e. $\Psi\circ\log$ is a group isomorphism).
\\[1mm plus .6mm]
In the general case, $G = \widetilde N \rtimes K$ \,($K$ acting faithfully),
the asymptotic cone $\Cal X_{\Cal U}$ of $G$ is determined by $\widetilde N$,
hence the description above applies. For $(k_n) \subseteq K$ put
$k = \lim\limits_{\Cal U} k_n$\,. Then for $(x_n) \subseteq \widetilde N$
with $(x_n) \in \Cal X$\,,
it follows from the definitions that $(x_n k_n) \sim (x_n)$ \,and then
\,(transferring to $\tilde{\fr n}$ and working with $\varphi$ as before) that
$(k_n x_n) \sim  (kx_n) \sim (kx_nk^{-1})$. Hence
$\iota(K)$ (see (a) below) is a subgroup of $\Cal Y_{\Cal U}$ isomorphic
to $K$.  Decomposing an arbitrary sequence from $\Cal X$ leads to
\,$\Cal Y_{\Cal U} \cong \widetilde N^{\infty} \rtimes K\,,\
\,\Cal M_{\Cal U}/\Cal N_{\Cal U}\cong K$\,.\vspace{0mm plus1mm}
\end{proof}
\begin{Rems} \label{rem93}	   
(a) \;Consider the homomorphism \,$\iota\!:G \to \Cal Y_{\Cal U}$ which
associates
(classes of) constant sequences to the elements of $G$
\,(i.e., $\iota(x)=(x)\spdot,\ \spdot$ refers to the equivalence class
\!$\mod\Cal N_{\Cal U}$\,). Then Theorem\;\ref{th65}\,(a)
expresses that \,$\ker\,\iota=N$, where $N/C = \nil (G/C)$,
the non-connected nil-radical ($G$~as in Theorem\;\ref{th92}\,).
\vspace{1mm plus1mm}
\item[(b)] The exponential function defines an analytic diffeomorphism between
$\tilde{\fr n}$ and $\widetilde N$ (resp. $\tilde{\fr n}^{\infty}$ and
$\widetilde N^{\infty}$). Since $\tilde{\fr n}$ and $\tilde{\fr n}^{\infty}$
are isomorphic as vector spaces, all these objects may be identified as
manifolds. Differences between $\widetilde N$ and $\widetilde N^{\infty}$
are possible with respect to
the group multiplication (but the one-parameter groups coincide as well).
If the $\fr w_j$ are chosen $K$-invariant , the actions of $K$ will agree
too. Putting $\delta_t(W)=t^jW$ for $W\in\fr w_j\,,\;t>0$ defines
a family $(\delta_t)$ of isomorphisms of $\tilde{\fr n}^{\infty}$ called
dilations. Hence these define also isomorphisms of  $\widetilde N^{\infty}$
and diffeomorphisms of $\tilde{\fr n}$ and $\widetilde N$\,.
Then the homeomorphism $\Psi$
considered in the proof above for $G=\widetilde N$ can be written as
$X^\infty\in\tilde{\fr n}\;\mapsto
\;(\exp(\delta_n(X^\infty))\sptilde\in\Cal X_{\Cal U}$
and $\Psi^{-1}$
is given by $(x_n)\sptilde\in\Cal X_{\Cal U}\;\mapsto\;
\lim\limits_{\Cal U}\delta_{\frac1n}(\log (x_n))\in\tilde{\fr n}$\,.
Similarly for a general compactly generated l.c.\;group of polynomial growth,
with $G,C,\widetilde N$ as in Theorem\;\ref{th92} one has a mapping
(projection)
$\pi_{\widetilde N}:G\to\widetilde N$
(whose image contains a closed co-compact subgroup).
Then $\Psi^{-1}$ is given
by  $(x_n)\sptilde\mapsto
\lim\limits_{\Cal U}\delta_{\frac1n}(\log(\pi_{\widetilde N} (x_n)))$\,.
But note that if $K$ is non-trivial,
this does no longer give a group homomorphism $\Cal X\to\widetilde N^{\infty}$
(\,$\Cal N_{\Cal U}$ is the maximal normal subgroup of $\Cal X$ contained
in $\Cal M_{\Cal U}$\,).
If the $\fr w_j$ are
$K$-invariant, the $\delta_t'=\exp\circ\delta_t\circ\log$ extend to
isomorphisms
of $\widetilde N^{\infty}\rtimes K$\,, putting $\delta_t'(xk)=\delta_t'(x)k$
for $x\in \widetilde N^{\infty},\,k\in K$\,. Then the isomorphism
$\Cal Y_{\Cal U} \to \widetilde N^{\infty} \rtimes K$ is given by \linebreak
$(x_n)\spdot\mapsto\lim\limits_{\Cal U}\delta_{\!\frac1n}'(x_n)\,,\
\Cal N_{\Cal U}=
\{(x_n)\in\Cal X:\lim\limits_{\Cal U}\delta_{\!\frac1n}'(x_n)=e\,\}$ \,and
\,$\Cal M_{\Cal U}=\linebreak
\{(x_n)\in\Cal X:\lim\limits_{\Cal U}\delta_{\!\frac1n}'(x_n)\in K\,\}$\,.
\vspace{1.5mm plus1mm}
\item[(c)] The results on the asymptotic behavior of powers $B^n$ have been
much refined in \cite{Br}. Using the algebraic hull, one can bypass the case
of the simply connected solvable Lie groups treated in sec.\,5 of \cite{Br}
and one can move directly to subsets of semidirect products
$\widetilde N\rtimes K$\,. Also, Cor.\,1.5 of \cite{Br} extends to
arbitrary  compactly generated l.c.\;groups of polynomial growth having no
non-trivial compact normal subgroup.
\\[.3mm]
If $\widetilde N$ is a connected, simply connected nilpotent Lie group,
$\delta_t$ as in (b) and $B$ a non-empty compact subset of $\widetilde N$\,,
one can show, using the methods of \cite{Br}\;sec.\,6, that
$(\delta_{\frac1n}(B^n))$ converges in the standard topology for the
space of non-empty compact subsets of $\widetilde N$\,. In the (much easier)
abelian case this was shown in \cite{EG}\;L.\,5.1. For a non-empty compact
subset $B$ in $\widetilde N\rtimes K$ ($K$ compact), one has also
convergence of $\bigl(\delta_{\frac1n}(\pi_{\widetilde N}(B^n))\bigr)$ \,and
if the image of $B$ in $K$ (considered as a quotient of $\widetilde G$)
generates a dense subgroup, then the limit set is $K$-invariant.
It is easy to give examples (e.g. singletons) where $(\delta_{\frac1n}(B^n))$
does not converge, see also \cite{EG}\;sec.\,6.
If the image of $B$ in $K$ generates a dense subgroup, then convergence holds
iff the
closed normal subgroup generated by the image of $B^{-1}B$ in $K$ is also
dense. Otherwise, there is a circulating behaviour in the $K$-\,component.
\\[1.5mm plus.3mm]
If $G$ is a compactly generated l.c.\;groups of polynomial growth having no
non-trivial compact normal subgroup, $B$ a relatively compact, measurable
$e$-neighbourhood
generating $G$\,, let $\widetilde B$ be the limit of
$(\delta_{\frac1n}({\overline B}^{\;n}))$  in the algebraic hull
$\widetilde G=\widetilde N\rtimes K$. If the Haar measure
$\lambda_{\widetilde G}$ of $\widetilde G$ is chosen so that the corresponding
invariant measure on the quotient space satisfies
$\lambda_{\widetilde G/G}(\widetilde G/G)=1$
one gets similarly as in \cite{Br}\;Cor.\,1.5\,:\quad
$\lim\,\dfrac{\lambda_G(B^n)}{n^d}=\lambda_{\widetilde G}(\widetilde B)$\,.
\vspace{.5mm} Similarly, when $B$ is not measurable with upper and lower
measures
(compare \cite{EG}\;Th.\,7.9). For a general compactly generated l.c.\;group
of polynomial growth $\widetilde B$ can be obtained by considering
the image of $B$ in $G/C$ ($C$~the maximal compact normal subgroup).
If $B^{-1}=B$ then $\widetilde B$ can be
described in terms of a Carnot--Caratheodory metric. Explicit examples
of the limit sets have been given in \cite{Br}\;sec.\,9.
It appears likely that the limit relation is also true when
$B$ is any relatively compact subset of $G$ such that $B$ (or some power)
has non-empty interior and $B^{-1}B$ generates $G$\,.
\vfill
\item[(d)] Theorem\;\ref{th95} shows that for $G$ a compactly generated
l.c.\;group of polynomial growth the spaces $\Cal X_{\Cal U}$ are isomorphic
for each non-trivial ultrafilter $\Cal U$ \,(the same for $\Cal Y_{\Cal U}$).
Hence, additional assumptions
on $\Cal U$ that are sometimes made (e.g. in \cite{Lo1}) to prove the
relevant properties finally turn out to be unnecessary. Also the set
$\Cal X_e=\{(\exp(\delta_n(X^\infty)):\;X^\infty\in\tilde{\fr n}\}$ used
in the proof of Theorem\;\ref{th95} does not depend on $\Cal U$\,. It
provides a section, $\Cal X=\Cal X_e\Cal M_{\Cal U}$\,. But it does not
look easy to give an intrinsic definition of such a set (without knowing
$\widetilde N$).
\end{Rems}

\begin{Def} \label{def94}	
A compactly generated l.c.\;group $G$ is said to be of
{\it near polynomial growth}, if there exist $c, d \in \N$ such that
\,$\lambda (V^{n_i}) \leq c\, n_i^d$
holds for an infinite sequence $(n_i) \subseteq \N$ tending to infinity
\,($\lambda$ Haar measure, $V$ a compact $e$-neighbourhood
generating $G$).
\vspace{1mm}\end{Def}

\begin{Thm}  \label{th95}      
Let $G$ be a compactly generated l.c.\;group. If \,$G$ has near polynomial
growth, then $G$ has polynomial growth.\vspace{-2mm}
\end{Thm}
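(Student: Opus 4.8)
I would run Gromov's inductive scheme \cite{Gr}, in the logically explicit form of van den Dries--Wilkie \cite{VW}, but phrased through the asymptotic cone of Section~5 and preceded by a pigeonhole reduction of the hypothesis. Write $f(n)=\lambda(V^n)$ and suppose $f(n_i)\le c\,n_i^{\,d}$ along $n_i\to\infty$. The first step is to produce a constant $A>1$ and an infinite set $S\subseteq\N$ along which $f$ is ``locally doubling'' — i.e.\ for every fixed $\ell\in\N$ one has $f(\ell n)\le A_\ell\,f(n)$ for all $n\in S$ — while still $f(n)\le c\,n^{\,d}$ for $n\in S$. For fixed $\ell$ this comes from telescoping $f(1),f(\ell),f(\ell^2),\dots,f(\ell^m)$ with $\ell^m\le n_i$ and using $f(\ell^m)\le f(n_i)\le c\,n_i^{\,d}$, which forces a definite proportion of the ratios $f(\ell^{k+1})/f(\ell^k)$ to stay below $\ell^{\,2d+1}$; letting $i\to\infty$ gives infinitely many good scales, and a diagonal choice over $\ell$ yields a single $S$. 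This is the combinatorial core of \cite{Gr}\,p.\,69--71 and of \cite{VW}, and is where the present argument is meant to be streamlined.

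\emph{The cone is a Lie object.} Fix a non-trivial ultrafilter $\Cal U\supseteq S$ and form $\Cal X_{\Cal U},\Cal Y_{\Cal U}$ as in \ref{di91}, taking all limits along $\Cal U$. Local doubling along $S$ forces the balls of $\Cal X_{\Cal U}$ to be totally bounded: an $\varepsilon$-separated subset of the unit ball gives pairwise disjoint left translates of $V^{\varepsilon n/4}$ inside $V^{3n}$ for $\Cal U$-many $n\in S$, so it has at most $f(3n)/f(\varepsilon n/4)$ elements, a bounded quantity. Hence $\Cal X_{\Cal U}$ is complete, locally compact, homogeneous, and (being a geodesic asymptotic cone) connected and locally connected; the bound $f(n)\le c\,n^{\,d}$ along $S$ bounds its topological dimension by $d$. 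By Montgomery--Zippin, $\Cal Y_{\Cal U}=\operatorname{Isom}(\Cal X_{\Cal U})$ is then a Lie group with finitely many components acting transitively on the manifold $\Cal X_{\Cal U}$; the same volume estimates show $\Cal Y_{\Cal U}$ has polynomial growth of degree $\le d$, so Theorem~\ref{th2} and the structure theory of \cite{Lo3a} apply to it and identify $\Cal Y_{\Cal U}$, after factoring its maximal compact normal subgroup, with a semidirect product of a connected, simply connected nilpotent Lie group by a compact group.

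\emph{Descent.} If $G$ is compact we are done (take $d=0$); otherwise $\Cal X_{\Cal U}$ is non-trivial, so the nilpotent part of $\Cal Y_{\Cal U}$ is positive-dimensional and $\Cal Y_{\Cal U}$ admits a continuous surjection onto $\R$. As in Gromov's argument this yields, after passing if necessary to an open subgroup $G'$ of finite covolume, a continuous surjection $q\!:G'\to\Z$ or $q\!:G'\to\R$, non-trivial because the renormalised $\Cal Y_{\Cal U}$-action records the growth of $G$ along $S$. The kernel $H'=\ker q$ is closed normal, and a coset-counting (resp.\ Weil-formula) estimate $\lambda_{G'}(V^n)\ge c'\,n\,\lambda_{H'}\!\bigl((V\cap H')^{n/2}\bigr)$ shows that $H'$ has near polynomial growth of degree $\le d-1$ along the still infinite scale set $\tfrac12 S$. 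Inducting on $d$, with base case $d=0$ (a uniform bound on $f(n_i)$ forces $\lambda(G)<\infty$, hence $G$ compact), one concludes that $H'$, hence $G'$, hence $G$, has genuine polynomial growth. The totally disconnected pieces may instead be absorbed by citing \cite{Tr2} and \cite{Mo1}, or by using that connected Lie groups have no growth intermediate between polynomial and exponential; Yamabe's theorem reconciles the two cases.

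The step I expect to be the real obstacle is the interface between the pigeonhole and the cone, together with the descent: extracting local compactness and finite dimensionality of $\Cal X_{\Cal U}$ from doubling that holds only at the scales of $S$ (not at all scales), uniformly enough to feed Montgomery--Zippin, and then carrying the degree drop $d\mapsto d-1$ through an induction in which the scale set is necessarily thinned at every stage, while producing the non-trivial homomorphism $q$ in an honest, non-circular way.
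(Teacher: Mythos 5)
Your opening moves (pigeonhole for good scales, asymptotic cone along an ultrafilter supported on them, Montgomery--Zippin) are the same as the paper's, but both of the load-bearing steps are set up incorrectly or left out. First, the pigeonhole. Writing $f(n)=\lambda(V^n)$, your telescoping gives, for each fixed $\ell$, one-step \emph{upward} bounds $f(\ell^{k+1})\le \ell^{2d+1}f(\ell^{k})$ at $\ell$-adic scales that depend on $\ell$; the good-scale sets for different $\ell$ need not meet at all, so the advertised ``diagonal choice over $\ell$'' is not available without a further argument. Worse, even granting a single infinite $S$ with $f(\ell n)\le A_\ell f(n)$ for $n\in S$, your covering count is not bounded: $f(3n)/f(\varepsilon n/4)=\bigl(f(3n)/f(n)\bigr)\cdot\bigl(f(n)/f(\varepsilon n/4)\bigr)$, and the second, \emph{downward} ratio at $n$ is exactly what your hypothesis does not control. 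What the cone construction needs, and what the paper proves, is that for every $k_0$ the set $\bigcap_{k\le k_0}\{\,n:\ \lambda(V^n)/\lambda(V^{[n/2^k]})<2\cdot 2^{k(d+1)}\,\}$ is infinite; this follows from an iteration showing that otherwise $\lambda(V^n)\ge c'\,n^{d+1}$ for \emph{all} large $n$, which contradicts the bound along the subsequence $(n_i)$ precisely because the iteration produces a lower bound at every large scale. That iteration is the actual content of the announced simplification of \cite{Gr}\,p.\,69--71, and it is not recovered by telescoping for each $\ell$ separately.

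Second, the descent. The paper never runs a Gromov-style induction on a general locally compact group: after the cone argument gives (as in \cite{Lo1}) a compact normal subgroup $K$ with $G/K$ a Lie group, it treats totally disconnected $G$ by producing an open compact normal subgroup, runs Gromov's induction only for \emph{discrete} $G$ (with a concrete construction of the sequences $\zeta$, the function $m(t)$, and the Tits alternative to get an infinite almost abelian quotient), and settles the Lie case by verifying the hypotheses of \cite{Lo1}\,Th.\,1, namely polynomial growth of $G/G^0$ together with type $R_G$, the latter extracted from near polynomial growth via $\lambda_H(x^{n_i}Ux^{-n_i})=O(n_i^k)$. Your substitute for the non-discrete case --- ``connected Lie groups have no intermediate growth, Yamabe reconciles'' --- is insufficient: polynomial growth of $G^0$ and of $G/G^0$ does not imply polynomial growth of $G$ (take $\R^2\rtimes_A\Z$ with $A$ hyperbolic), and it is exactly the type-$R$ condition that your outline never produces. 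Likewise, the non-trivial homomorphism $q$ onto $\Z$ or $\R$, the compact generation and degree drop of $\ker q$ under a growth hypothesis that holds only along a thinned scale set, and the claim that $\Cal Y_{\Cal U}$ itself has polynomial growth of degree $\le d$ so that Theorem~\ref{th2} applies to it, are asserted rather than proved --- and you flag the first of these yourself. As it stands the proposal is a plan whose genuinely new locally compact content (the mixed Lie case and the correct scale-selection feeding Montgomery--Zippin) is missing; the cases your citations \cite{VW}, \cite{Tr2}, \cite{Mo1} do cover are the ones the paper could also have quoted.
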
\noindent
For discrete groups, this was shown in \cite{VW} (and with another proof
in \cite{Kl} who used the term weak polynomial growth).
For totally disconnected groups (using the language of graph theory) the
result is in \cite{Tr2}.

\begin{proof}
The argument is essentially the same as in \cite{Lo1}, we just indicate
the changes. Given an ultrafilter $\Cal U$ on $\N$\,, we can define the
asymptotic cone $\Cal X_{\Cal U}$\,.
The proof of \cite{Lo1}\;L.\,8 shows that $\Cal X_{\Cal U}$
is locally compact and finite dimensional if there exist $c_0,d_0$
such that the sets
\,$M_k = \{n \in \N\! :
\lambda(V^n)/\lambda(V^{[ n/{2^k}]}) < c_0\,2^{k d_0}\}$ belong to
$\Cal U$ for $k = 1, 2,\dots$ \;(the assumption
\,$\{2^n: n \geq 1\} \in \Cal U$ was used only to facilitate the formulas;
one has to replace $V^{\frac{nr}4}$ by $V^{[\frac{nr}4]}$ in the proof).
Take $c,d$ as in Definition\;\ref{def94}. We will show next that for
$c_0=2,\;d_0=d+1,\ \,\bigcap\limits_{k=1}^{k_0}M_k$ is infinite for all
$k_0\ge1$ 
\,(then there exists an ultrafilter $\Cal U$ containing these sets).
\\[.5mm]
Assume that \,$k_1=\max\Bigl(\bigcap\limits_{k=1}^{k_0}M_k\Bigr)$ is finite
for some
$k_0\ge1$\,.\;Consider\vspace{-1.5mm} \,$k_2=\linebreak\max(k_1,
2^{k_0+1}d_0)$\,.
Then for every $n>k_2$
there exists $n'=\Bigl[\dfrac n{2^k}\Bigr]$ with $1\le k\le k_0$
\vspace{-1.5mm}\,and
\,$\lambda(V^n)\ge\lambda(V^{n'})\,2^{kd_0+1}\ge
\lambda(V^{n'})\,\Bigl(\dfrac n{n'}\Bigr)^{d_0}$. By iteration, it would
follow that\vspace{-1.8mm}
for every $n>k_2$ there exists $n'\le k_2$ with
\,$\lambda(V^n)\ge\dfrac{\lambda(V^{n'})}{(n')^{d_0}}\,n^{d_0}$ and this
contradicts the condition of Definition\;\ref{def94}.
\\
Now choose $\Cal U$ so that $\Cal X_{\Cal U}$
is locally compact and finite dimensional, then it follows similarly as
in \cite{Lo1} that every closed ball in $\Cal X_{\Cal U}$ is compact and then
that $\Cal Y_{\Cal U}$ is an almost connected Lie group. If $G$ is totally
disconnected, the argument of \cite{Lo1} (see below) shows that $G$ has an
open compact normal subgroup. In the general case, one can see as in
\cite{Gu}\;Th.\,I.3 that near polynomial growth is inherited to
quotient groups (and clearly also to open subgroups). Then as in \cite{Lo1},
it follows that $G$ has a compact normal subgroup $K$ such that $G/K$ is
a Lie group.
\\[1.5mm plus .2mm]
For $G$ is discrete, we basically argue like Gromov.
One has to show the existence of a normal
subgroup $H$ such that $G/H$ is infinite and almost abelian \,(then an
inductive argument, based on a splitting lemma similar to \cite{Gr}\;p.\,59
and lemma\;(b) on \cite{Gr}\;p.\,61
works). Consider $\iota$ as in Remark\;\ref{rem93}\,(a).
More generally, if \,$\zeta=(x_n)\subseteq G$ put
$\iota_{\zeta}(x)=(x_n^{-1}xx_n)$\,. If $\iota(G)$ is infinite, then it is
either almost abelian or
the image in the adjoint group of $\Cal Y_{\Cal U}$ is an infinite linear
group and this can be settled using the Tits alternative. It remains to
consider the case
(after passing to a subgroup of finite index) that $\iota(G)$ is trivial and
$G$ is not almost abelian \,(so $G$ is not an $FC$-group).
Then we want to show (slightly more directly) that there exists $\zeta$ such
that $\iota_{\zeta}(G)\subseteq \Cal X$ and the image in $\Cal Y_{\Cal U}$
becomes
arbitrarily large (then lemma (a) on \cite{Gr}\;p.\,61 applies). This can be
done similarly as in the totally disconnected case (\cite{Lo1}\;p.\,116)\,:
\;For $t>0$ put $m(t)=\linebreak
\min\{m\in\N\!:\ \exists\, x\in V,\,y\in G\!:\;\tau(y)=m\,,\;
\tau(y^{-1}xy)\ge t\,\}$\,. Since $G$ is not an $FC$-group \,(i.e.,
$\{y^{-1}xy:y\in G\}$ is infinite for some $x\in V$), $m(t)$ is finite for
all~$t$\,. We have \,$\tau(y^{-1}zy)<t+2$ for all $z\in V,\ y\in G$ with
$\tau(y)\le m(t)$\,. Triviality of $\iota(G)$ implies $m(t)/t\to\infty$
for $t\to\infty$\,. Since $\Cal Y_{\Cal U}$ is a Lie group, there exist
$l\in\N\,,\,\varepsilon>0$ such that
\,$\{\eta\in \Cal Y_{\Cal U}\!:\,\lVert\eta\rVert_l<\varepsilon\,\}$
contains no non-trivial subgroup of~$\Cal Y_{\Cal U}$
\,($\lVert(x_n)\spdot\rVert_l=
\lim\limits_{n\in\Cal U}\dfrac{\lVert x_n\rVert_{ln}}n$\,, this gives a basis
for the neighbourhoods in $\Cal Y_{\Cal U}$\,; on $G\ \lVert\ \rVert_n$ is
taken from Definition\;\ref{def61}). Now choose $p\in\N$\,.
For \,$m(n\varepsilon/p)<ln$ put $y_n=e$\,, otherwise
\,(splitting an element giving the minimum in the definition of
$m(n\varepsilon/p)$\,) \,there exists $y_n\in G$
such that \,$\tau(y_n)=m(n\varepsilon/p)-ln$ \,and
\,$n\varepsilon/p\le\sup\{\lVert y_n^{-1}xy_n\rVert_{ln}\!:\,x\in V\}<
n\varepsilon/p+2$\,.
Then $\zeta=(y_n)$ satisfies \,$\iota_{\zeta}(V)\subseteq \Cal X$\,, hence
$\iota_{\zeta}(G)\subseteq \Cal X$ \,and
$\lVert\iota_{\zeta}(x)\spdot\rVert_l=\varepsilon/p$
for some $x\in V$ \,(observe that $V$ is finite).
Thus the image of $\iota_{\zeta}(G)$ in $\Cal Y_{\Cal U}$
contains at least $p$ elements.
\\[.2mm plus .1mm]
It remains to consider the case where $G$ is a Lie group.
One can verify the conditions of \cite{Lo1}\;Th.\,1\,: \ $G/G^0$ has
polynomial growth by the argument above. If \,$x,H,U$ are as in
\cite{Lo1}\;L.\,3, then near polynomial growth implies
\,$\lambda_H(x^{n_i}Ux^{-n_i})=O(n_i^k)$ for some infinite sequence $(n_i)$
and then the proof given on \cite{Lo1}\;p.\,112 for \,(a)$\Rightarrow$(b)
works to show type $R_G$\,.
\end{proof}

For discrete, finitely generated groups (and $K$-approximate groups) this has
has been much improved in the work of Breuillard, Green, Tao (see
\cite{Ta}\;Th.\,1.10.1, Th.\,1.10.10 and Exerc.\,1.10.2).


\begin{thebibliography}{1}

\bibitem[Ba]{Ba}
G.~Baumslag, 
\emph{Lecture notes on nilpotent groups}, Reg.~Conf.~Ser.~Math.,
Amer.~Math.~Soc., Providence, 1971.
\vskip0,3cm
\bibitem[Br]{Br}
E. Breuillard, \emph{Geometry of locally compact groups of polynomial growth
and shape of large balls}, Groups Geom. Dyn. \textbf{8} (2014), 669--732.
\vskip0,3cm
\bibitem[EG]{EG}
W. R. Emerson, F. P. Greenleaf, \emph{Asymptotic behavior of products
$C^p=C+\dots+C$ in locally compact abelian groups},
Trans. Amer. Math. Soc. \textbf{145} (1969), 171--204.
\vskip0,3cm
\bibitem[FF]{FF}
J. Frisch, P. V. Ferdowsi, \emph{Non-virtually nilpotent groups have infinite
conjugacy class quotients}, arXiv preprint,
arXiv:1803.05064 (2018).
\vskip0,3cm
\bibitem[FG1]{FG1}
G.~Fendler, K.~Gr\"ochenig, M.~Leinert, J.~Ludwig, C.~Molitor-Braun,
\emph{Weighted Group Algebras on Groups of Polynomial Growth},
Math.~Z. \textbf{245}
(2003),  791--821.
\vskip0,3cm
\bibitem[FG2]{FG2}
G.~Fendler, K.~Gr\"ochenig, M.~Leinert, 
\emph{Symmetry of weighted $L^{1}$-algebras and the $GRS$-condition}, 
Bull.~London \ Math.~Soc. \textbf{38} (2006), 625--635.
\vskip0,3cm
\bibitem[GM]{GM}
S.~Grosser, M.~Moskowitz,
\emph{Compactness conditions in topological groups},
J. Reine Angew. Math. \textbf{246} (1971), 1--40.
\vskip0,3cm
\bibitem[Gr]{Gr}
M.~Gromov,
\emph{Groups of polynomial growth and expanding maps}, Inst.
Hautes \'Etudes Sci. Publ. Math. \textbf{53} (1981), 53--78.
\vskip0,3cm
\bibitem[Gu]{Gu}
Y.~Guivarc'h, \emph{Croissance polynomiale et p\'eriodes des fonctions
harmoniques}, Bull.~Soc. Math. France \textbf{101} (1973), 333--379.
\vskip0,3cm
\bibitem[Ha]{Ha}
P. de\,la\,Harpe, \emph{Topics in geometric group theory}, 
University of Chicago Press, Chicago, 2000.
\vskip0,3cm
\bibitem[Ho]{Ho}
G.~Hochschild,  \emph{The structure of Lie groups}, 
Holden--Day,  San\,Francisco--London--Amster\-dam, 1965.
\vskip0,3cm
\bibitem[HR]{HR}
E.~Hewitt, K.~A.~Ross, \emph{Abstract harmonic analysis I},
Springer, Berlin, 1979.
\vskip0,3cm
\bibitem[Kl]{Kl}
B. Kleiner, \emph{A new proof of Gromov's theorem on groups of polynomial
growth},
J. Amer. Math. Soc. \textbf{23} (2010), 815--829.
\vskip0,3cm
\bibitem[KT]{KT}
L. Kramer, K. Tent, \emph{Asymptotic cones and ultrapowers of Lie groups},
Bull. Symbolic Logic  \textbf{10} (2004), 175--185.
\vskip0,3cm
\bibitem[Lo1]{Lo1}
V.~Losert,
\emph{On the structure of groups with polynomial growth},
Math.~Z. \textbf{195}
(1987),  109--117.
\vskip0,3cm
\bibitem[Lo2]{Lo2}
V.~Losert, \emph{On the structure of groups with polynomial growth II},
J.~London~Math.~Soc. (2)\;\textbf{63} (2001), 640--654.
\vskip0,3cm
\bibitem[Lo3]{Lo3a}
V.~Losert, \emph{On the structure of groups with polynomial growth III},
J. Algebra \textbf{554} (2020), 1--40.
\vskip0,3cm
\bibitem[Mal]{Ma}
A. I. Mal\'{}cev, \emph{On a class of homogeneous spaces},
Izv. Akad. Nauk SSSR Ser. Mat. \textbf{13} (1949). 9--32.
\vskip0,3cm
\bibitem[Man]{Man}
A. Mann, \emph{How groups grow},
London Mathematical Society Lecture Note Series 395,
Cambridge University Press, Cambridge, 2012.
\vskip0,3cm
\bibitem[Me]{Me}
Ju. I. Merzljakov, \emph{On linear groups with bounded cyclic subgroups},
Sibirsk. Mat. Z.  \textbf{7}  (1966), 318--322.
\vfill
\bibitem[Mol1]{Mo2}
R. G.~M\"oller, \emph{Topological groups, automorphisms of infinite graphs
and a theorem of\linebreak Trofimov},  Discrete Math. \textbf{178} (1998),
271--275.
\vskip0,3cm
\bibitem[Mol2]{Mo1}
R. G.~M\"oller, \emph{$FC^-$-\,elements in totally disconnected groups and
automorphisms of infinite graphs}, Math. Scand. \textbf{92} (2003), 261--268.
\vskip0,3cm
\bibitem[Mos]{Mo}
G.~D.~Mostow, \emph{Some applications of representative functions to
solvmanifolds}, 
Amer. J. Math. \textbf{93} (1971), 11--32.
\vskip0,3cm
\bibitem[MS]{MS}
A.~M.~Macbeath, S.~Swierczkowski
\emph{On the set of generators of a subgroup}, 
Indag.~Math. \textbf{21} (1959), 280--281.
\vskip0,3cm
\bibitem[Ra]{Ra}
M. S.~Raghunathan, \emph{Discrete Subgroups of Lie Groups},
Ergeb. Math. Grenzgeb.\,68, Springer, Berlin--Heidelberg--New York, 1972.
\vskip0,3cm
\bibitem[Ro]{Ro}
D. J. S.~Robinson, \emph{Finiteness conditions and generalized soluble groups.
Part 1}, Ergeb. Math. Grenzgeb.\;62, Springer, Berlin--Heidelberg--New York,
1972.
\vskip0,3cm
\bibitem[Se]{Se}
D.~Segal, \emph{Polycyclic groups},
Cambridge University Press, Cambridge, 1983.
\vskip0,3cm
\bibitem[Ta]{Ta}
T.~Tao, \emph{Hilbert's fifth problem and related topics},
Grad.\;St.\;in Math.\;153, Amer.\;Math.\;Soc., Providence, RI, 2014.
\vskip0,3cm
\bibitem[Ti]{Ti}
J.~Tits, \emph{Automorphismes \`a d\'eplacement born\'e des groupes de Lie},
Topology \textbf{3} (1964), suppl.~1,  97--107.
\vskip0,3cm plus2mm
\bibitem[Tr1]{Tr1}
V. I. Trofimov, \emph{Action of a group on a graph},
Izv. Akad. Nauk SSSR Ser. Mat. \textbf{50} (1986), 1077--1096.
\vskip0,3cm
\bibitem[Tr2]{Tr2}
V. I. Trofimov, \emph{Undirected and directed graphs with near polynomial
growth}, Discuss. Math. Graph Theory \textbf{23} (2003), 383--391.
\vskip0,3cm
\bibitem[Va]{Va}
V.~S.~Varadarajan, \emph{Lie Groups, Lie Algebras and
Their Representations}, Springer,
Berlin--\linebreak Heidelberg--New~York, 1984.
\vskip0,3cm
\bibitem[VW]{VW}
L. van den Dries, A. J. Wilkie, \emph{Gromov's theorem on groups of
polynomial growth and elementary logic},
J. Algebra  \textbf{89} (1984), 349--374.
\vskip0,3cm
\bibitem[Wan]{Wa} 
J.~S.~P.~Wang, \emph{The Mautner phenomenon for p-adic Lie groups},
Math. Z. \textbf{185} (1984), 403--412.
\vskip0,3cm
\bibitem[War]{War}
R.~B.~ Warfield, \emph{Nilpotent groups}, Lecture Notes Math.\,513, Springer,
Berlin--Heidelberg--New York, 1976.
\vskip0,3cm
\bibitem[Wi]{Wi}
B. Wilking, \emph{On fundamental groups of manifolds of nonnegative curvature},
Differential Geom. Appl. \textbf{13} (2000), 129--165.
\vskip0,3cm
\bibitem[WY]{WY}
T. S.~Wu, Y. K.~Yu, \emph{Compactness properties of topological groups},
Michigan Math.~J. \textbf{19} (1972), 299--313.
\end{thebibliography}
\end{document}